\documentclass{amsart}
\usepackage{amssymb,indentfirst,xspace,bm}
\usepackage{framed,paralist,suffix,color}
\usepackage{subfigure}
\usepackage{IEEEtrantools}
\usepackage{mathtools}
\usepackage[normalem]{ulem}

\newif\ifdraft
\draftfalse

\newif\ifTwoColumn
\TwoColumnfalse

\ifTwoColumn
  \setlength{\columnsep}{24.98999pt}
  \usepackage[landscape,includehead,headheight=12pt,headsep=10pt,left=\columnsep,right=\columnsep,top=12.98999pt,bottom=\columnsep,]{geometry}
  \twocolumn
\else
  \ifdraft
    \usepackage[notref,notcite]{showkeys}
  \fi
\fi

\usepackage{cite}
\usepackage[colorlinks]{hyperref}
\definecolor{link1}{rgb}{0,0,.7}
\definecolor{link2}{rgb}{0,0.25,0.5}
\hypersetup{colorlinks,allcolors=link1,citecolor=link2}


%
%
\makeatletter
\@mparswitchfalse
\makeatother

\newcommand{\warning}[1]{\typeout{}\typeout{WARNING: #1 at line \the\inputlineno}\typeout{}}
\newenvironment{todo}[1][TODO]{%
    \ifdraft\else\warning{TODO still present in final version}\fi
    \MakeFramed{\advance\hsize-\width \FrameRestore}\textbf{#1. }}%
    {\endMakeFramed}
    {\end{todo}}

%
%
\makeatletter
\font\uwavefontb=lasyb10 scaled 652
\newcommand{\UWave}[2][blue]{\bgroup \markoverwith{\textcolor{#1}{\lower3.5\p@\hbox{\uwavefontb\char58}}}\ULon{#2}}
\newcommand{\SOut}[2][red]{\bgroup \markoverwith{\textcolor{#1}{\lower-.1ex\hbox{\uwavefontb\char58}}}\ULon{#2}}
\newcommand{\Highlight}[2][yellow]{\bgroup\markoverwith {\textcolor{#1}{\rule[-.2em]{2pt}{1.2em}}}\ULon{#2}}
\makeatother
\newcommand{\Remove}[1]{\SOut{#1}}
\newcommand{\Add}[1]{\UWave{#1}}

\newcommand{\remove}[1]{\ifmmode{\text{\Remove{\ensuremath{#1}}}}\else\Remove{#1}\fi}
\newcommand{\add}[1]{\ifmmode{\text{\Add{\ensuremath{#1}}}}\else\Add{#1}\fi}

\newcommand{\highlight}[2][yellow]{%
  \ifmmode\text{\Highlight[#1]{\ensuremath{#2}}}%
  \else\Highlight[#1]{#2}\fi}%
%
%
\newenvironment{beqn}[1][:C?s]%
    {\left\{\begin{IEEEeqnarraybox}[\relax][c]{#1}}%
    {\end{IEEEeqnarraybox}\right.}

  {\par\emph{#1\addpunct}}%
  {\par}

%
%
\newcommand{\del}{\partial}
\newcommand{\delt}{\del_t}
\newcommand{\delh}{\del_h}

\newcommand{\lap}{\triangle}

\newcommand{\inv}{^{-1}}

\newcommand{\grad}{\nabla}

\newcommand{\gradperp}{\grad^\perp}

\newcommand{\varmin}{\wedge}

\newcommand{\E}{\bm{E}}
\newcommand{\prob}{\bm{P}}
\newcommand{\mc}{\mathcal}
\newcommand{\AND}{\;\&\;}
\DeclareMathOperator{\erf}{erf}
\DeclareMathOperator{\sign}{sign}

\DeclareMathOperator{\diam}{diam}

\newcommand{\eps}{\varepsilon}
\renewcommand{\epsilon}{\eps}
\renewcommand{\leq}{\leqslant}
\renewcommand{\geq}{\geqslant}

\newcommand{\Chi}[1]{\Chi*{\{#1\}}}
\WithSuffix\newcommand\Chi*[1]{\chi_{\raise-.5ex\hbox{$\scriptstyle#1$}}}

%
%
\newcommand{\R}{\mathbb{R}}
\newcommand{\Z}{\mathbb{Z}}
\newcommand{\N}{\mathbb{N}}

%
%
\newif\iftextstyle
\textstyletrue
\everydisplay\expandafter{\the\everydisplay\textstylefalse}

%
%
\DeclarePairedDelimiter{\paren}{(}{)}
\DeclarePairedDelimiter{\brak}{[}{]}
\DeclarePairedDelimiter{\set}{\{}{\}}
\DeclarePairedDelimiter{\floor}{\lfloor}{\rfloor}

\DeclarePairedDelimiter{\abs}{\lvert}{\rvert}
\DeclarePairedDelimiter{\norm}{\lVert}{\rVert}

\newcommand{\st}[1][\big]{\;\iftextstyle|\else#1|\fi\;}
\newcommand{\given}[1][\big]{\;\iftextstyle|\else#1|\fi\;}
\WithSuffix\newcommand\given*{\;\middle|\;}
\newcommand{\at}[2][\Bigr]{\iftextstyle|\else#1|_{#2}\fi}
\WithSuffix\newcommand\at*[1]{\middle|_{#1}}

\newcommand{\defeq}{\stackrel{\scriptscriptstyle\text{def}}{=}}

%
%
\numberwithin{equation}{section}
\allowdisplaybreaks

\providecommand{\cites}[1]{\cite{#1}}

%
%
\newtheorem{theorem}{Theorem}[section]

\newtheorem{lemma}[theorem]{Lemma}

\newtheorem*{theorem*}{Theorem}
\newtheorem*{lemma*}{Lemma}
\newtheorem*{proposition*}{Proposition}
\newtheorem*{corollary*}{Corollary}

\theoremstyle{definition}

\theoremstyle{remark}

\newtheorem*{remark*}{Remark}

%
%

\newcommand{\eff}{_\text{eff}}

\begin{document}
  \title[Anomalous diffusion in fast cellular flows]
    {Anomalous diffusion  in fast cellular flows at intermediate time scales}
  \author{Gautam Iyer}
  \address{Department of Mathematical Sciences, Carnegie Mellon University, Pittsburgh PA 15213}
  \email{gautam@math.cmu.edu}

  \author{Alexei Novikov}
  \address{Department of Mathematics, Pennsylvania State University, State College PA 16802}
  \email{anovikov@math.psu.edu}

  \subjclass[2010]{Primary
    35B27; 
    Secondary
    35R60, 
    60H30, 
    76R50.
  }
  \keywords{anomalous diffusion, cellular flows, convection enhanced diffusion}

  \thanks{This material is based upon work partially supported by the National Science Foundation under grants
  DMS-0908011, 
  DMS-1007914, 
  DMS-1252912. 
  GI also acknowledges partial support from an Alfred P. Sloan research fellowship.
  The authors also thank the Center for Nonlinear Analysis (NSF Grants No. DMS-0405343 and DMS-0635983), where part of this research was carried out.}
  \begin{abstract}
    It is well known that on long time scales the behaviour of tracer particles diffusing in a cellular flow is effectively that of a Brownian motion.
    This paper studies the behaviour on ``intermediate'' time scales before diffusion sets in.
    Various heuristics suggest that an anomalous diffusive behaviour should be observed.
    We prove that the variance on intermediate time scales grows like $O(\sqrt{t})$.
    Hence, on these time scales the effective behaviour can not be purely diffusive, and is consistent with an anomalous diffusive behaviour.
  \end{abstract}
  \maketitle


  \section{Introduction}

  We study the behaviour of tracer particles diffusing in the presence of a strong array of opposing vortices (a.k.a. ``cellular flow'').
  Well known homogenization results show that on long time scales these particles effectively behave like a Brownian motion, with an enhanced diffusion coefficient (see for instance~\cites{Olla94,BensoussanLionsEtAl78,PavliotisStuart08}).
  On \emph{intermediate} time scales, however, tracer particles have their movement ``arrested'' in pockets of recirculation, leading to an anomalous diffusive behaviour~\cites{YoungPumirEtAl89,Young88,CardosoTabeling88,HaynesVanneste14,HaynesVanneste14b}.

  The purpose of this paper is to prove a quantitative estimate for the variance of these particles on intermediate time scales (Theorem~\ref{thmVarianceBound}).
  More precisely, we prove that the variance at time $t$ is $O(\sqrt{At})$, where $A$ is the P\'eclet number of the system.
  A purely diffusive process (e.g. Brownian motion) would have variance that is linear in $t$, and so the effective behaviour of the tracer particles at these time scales ``must be anomalous''.
  We remark, however, that we can not presently prove convergence of the particle trajectories to an effective process on intermediate time scales.

  
  \subsection{The long time behaviour.}\label{sxnHomog}
  We begin with a brief introduction to
  results about the long time behaviour of tracer particles.
  For concreteness, we model the position of the tracer particle by the SDE
  \begin{equation}\label{eqnXSDE}
    dX_t=-A v(X_t) \, dt + \sqrt{2} \, dW_t,	\qquad X_0=x.
  \end{equation}
  where $W$ is a 2D Brownian motion, $v$ is a velocity field with ``cellular'' trajectories, and $A > 0$ is the strength of the advection.
  We remark that $A$ is also the \emph{P\'eclet number} of this system, which is a non-dimensional parameter measuring the relative importance of cell size, velocity magnitude and the diffusion strength.

  For simplicity, we further assume
  \begin{equation} \label{eqnVH}
    v=\gradperp h \defeq \binom{-\del_2 h}{\phantom-\del_1 h},
    \quad\text{where }
    h(x_1, x_2) \defeq \sin(x_1) \, \sin(x_2).
  \end{equation}
  Geometrically, this is the velocity field associated with a two dimensional rectangular array of opposing vortices.
  The explicit choice of $v$ above is only to simplify many technicalities;
  the methods used (both the results we cite and in this paper) will apply to more general, but still cellular, velocity fields.

  The upshot of well known homogenization results is that the long time behaviour of $X$ is effectively that of a Brownian motion.
  More precisely, for any $\eps > 0$, define the process $X^\eps$ by $X^\epsilon_t = \eps X_{t/\eps^2}$.
  Then as $\eps \to 0$, the processes $X^\eps$ converges (in law) to $\sqrt{D\eff(A)} \, W'$, where $W'$ is a Brownian motion, and $D\eff(A)$ is the \emph{effective diffusivity} (see~\cites{BensoussanLionsEtAl78,PavliotisStuart08,Olla94}).

  The underlying mechanism is the interaction of two phenomena:
  The drift of the process $X^\eps$, which operates fast along closed orbits of size $\eps$, and the diffusion, which operates slowly moving $X^\eps$ between orbits.
  The combined effect produces an effective Brownian motion with an enhanced diffusion coefficient (see~\cite{FannjiangPapanicolaou94}).

  An outline of a rigorous proof (due to Freidlin~\cite{Fredlin64}) when $v$ is periodic proceeds as follows:
  Let the vector function $\chi$ be a periodic solution to the cell problem
  \begin{equation}\label{eqnChi}
    -\lap \chi + A v \cdot \grad \chi = - A v.
  \end{equation}
  It\^o's formula and elementary manipulations show
  \begin{equation*}
    X^\eps_t - X^\eps_0
      = -\eps\brak[\big]{
	    \chi(X_{\frac{t}{\eps^2}}) - \chi(X_0)
	  }
	+ \eps \int_0^{t/\eps^2} \sqrt{2} \paren[\big]{
	    I + \grad \chi(X_s)
	  } \, dW_s.
  \end{equation*}
  Since $\chi$ is independent of $\eps$, the drift term above converges to $0$ as $\eps \to 0$.
  By the ergodic theorem the quadratic variation of second term converges to $t D\eff(A)$, where
  \begin{equation*}
    \paren[\big]{ D\eff(A) }_{i,j}
      \defeq 2\delta_{i,j} + \frac{2}{\pi^2} \int_{(0, \pi)^2} \grad \chi_i \cdot \grad \chi_j.
  \end{equation*}
  L\'evy's criterion now shows that the limit is a Brownian motion with diffusion coefficient $\sqrt{D\eff(A)}$.
  We refer the reader to~\cites{Olla94,FannjiangPapanicolaou94} for more details.

  We remark further that the behaviour of $\chi$ and $D\eff$ have been extensively studied~\cites{Childress79,ChildressSoward89,FannjiangPapanicolaou94,Heinze03,Koralov04,NovikovPapanicolaouEtAl05,RosenbluthBerkEtAl87,Shraiman87,RhinesYoung83,SaguesHorsthemke86} as the P\'eclet number $A \to \infty$.
  It is well known that $D\eff \approx \sqrt{A} \, I$ asymptotically as $A \to \infty$.
  Consequently $\E^x \abs{X_t - x}^2 = O(\sqrt{A} \, t)$, when $A$ and $t$ are large.
  Recently Haynes and Vanneste~\cite{HaynesVanneste14} (see also~\cite{HaynesVanneste14b}) studied the long-time behavior of $X_t$ using  large deviations.
  They performed a formal asymptotic analysis in various regimes, and suggest that the ``active'' (or most mobile) tracer particles concentrate near the level set $\{h = 0\}$.
  The proof of our main results shows that a similar phenomenon occurs at intermediate time scales, and is described in Section~\ref{sxnIntroIntTime}.

  Finally, we also mention that the long time behaviour of $X$ has been studied under far more general assumptions on $v$ (see for instance~\cites{DolgopyatKoralov08,DolgopyatKoralov13,KaloshinDolgopyatEtAl05,DolgopyatFreidlinEtAl12}), and has a huge number of applications ranging from flame propagation to swimming (e.g.\ \cites{Taylor53,LiuXinEtAl11,NolenXinEtAl09,ShawThiffeaultEtAl07,Sowers06,ThiffeaultChildress10}).
  
  \subsection{The intermediate time behaviour.}\label{sxnIntroIntTime}

  In contrast to large time scales, the variance $\E^x \abs{X_t - x}^2$ at intermediate time scales doesn't grow linearly with time (see figure~\ref{fgrFitVar}).
  Theoretical and experimental results in~\cites{Young88,YoungPumirEtAl89,Young10,GuyonPomeauEtAl87} suggest instead
  \begin{equation}\label{eqnBillYoung}
    \E^x \abs{X_t - x}^2 = O(\sqrt{A} \sqrt{t}), \qquad\text{for } 1/A \ll t \ll  1, \text{ provided } h(X_0) = 0.
  \end{equation}
  The main contribution of this paper is to prove~\eqref{eqnBillYoung}, modulo a (necessary) logarithmic correction.
  \begin{figure}[htb]
    \newlength{\trajfigwidth}
    \setlength{\trajfigwidth}{.35\linewidth}
    \subfigure[\label{fgrFitVar}%
      Plot of the variance $\E \abs{X_t}^2$ vs $t$.
      The red dashed curve fits $(\E \abs{X_t}^2)^2$ to a linear function of $t$ on $\set{ t< 0.015 }$.
      The green dashed curve fits $\E \abs{X_t}^2$ to a linear function of $t$ on $t \geq 0.015$.%
    ]{%
      \includegraphics[width=1.2373113\trajfigwidth]
	{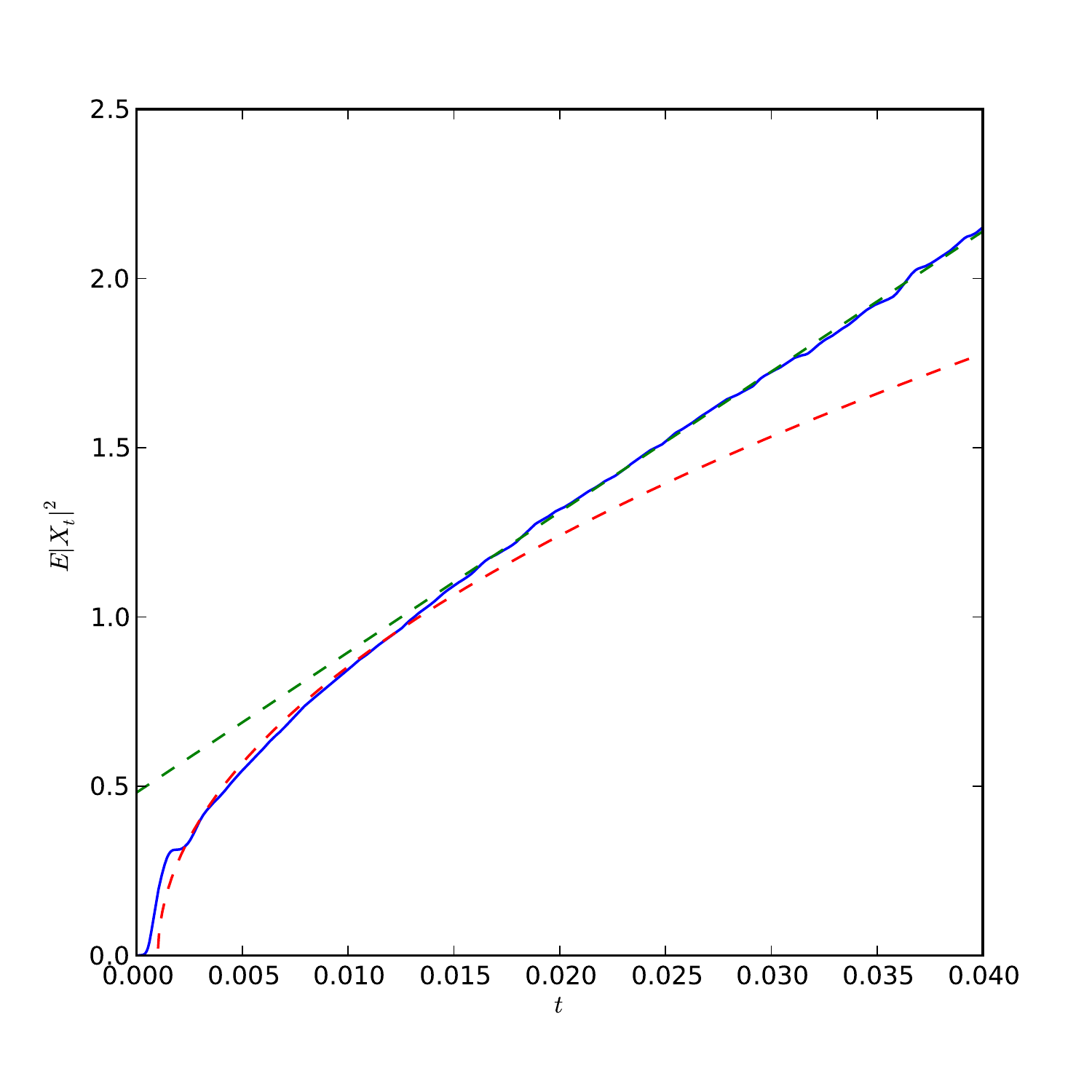}}
    \qquad
    \subfigure[\label{fgrTraj}%
      Three sample trajectories of $X$ for $0 \leq t \leq 2$.
      Cells that are nearly filled in correspond to long periods of rest.
      The remainder correspond to short periods of ballistic motion.%
    ]{%
      \raisebox{16.9pt}
	{\includegraphics[width=\trajfigwidth]{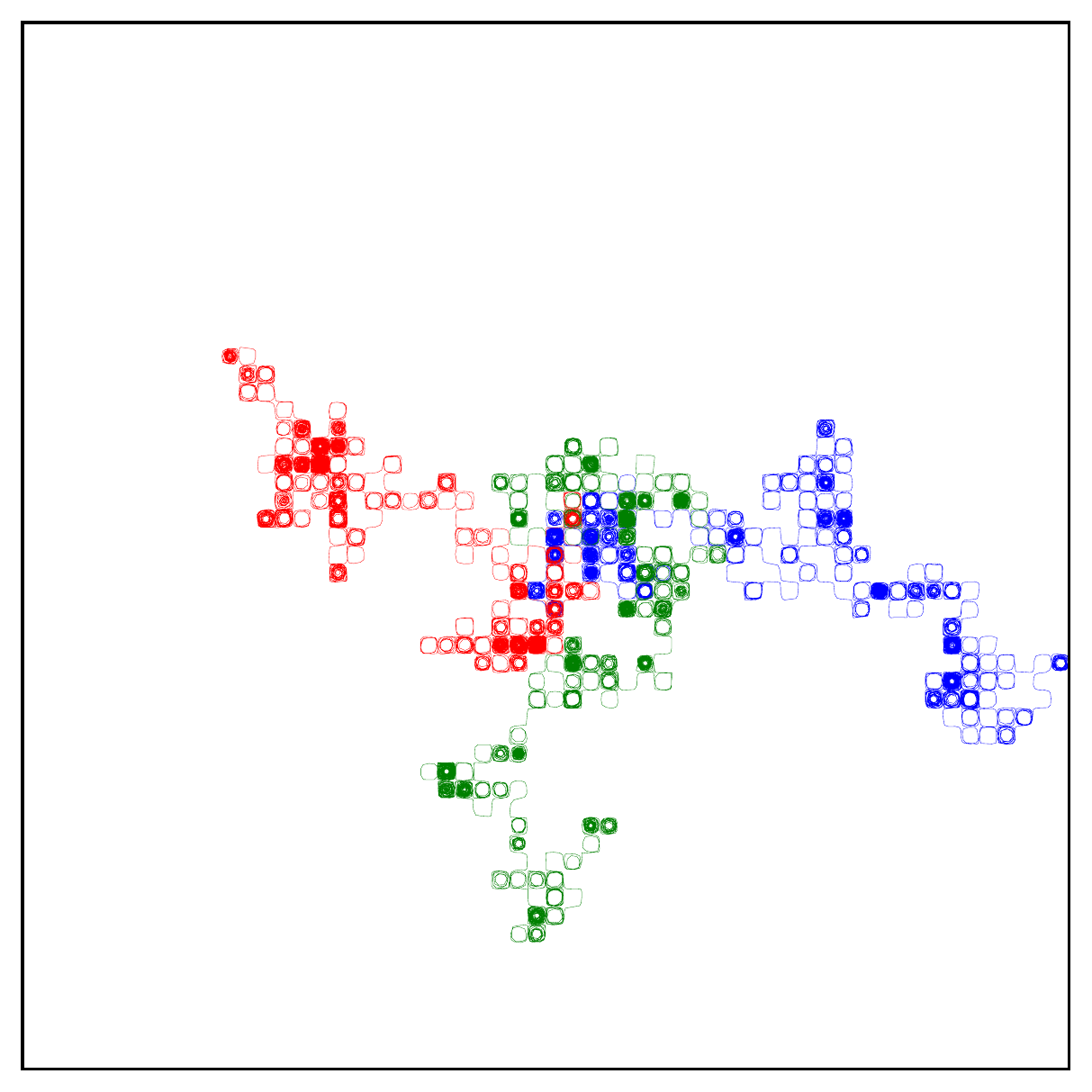}}%
     }
    \caption{Numerical simulations of equation~\eqref{eqnXSDE} with $A = 1000$.}
  \end{figure}

  \begin{theorem}\label{thmVarianceBound}
    Let $N > 0$, and define $\delta = N / \sqrt{A}$.
    There exists $T > 0$ and a positive constant $c$ such that whenever
    \begin{equation}\label{eqnTRange}
      \paren[\big]{ \delta \abs{\ln \delta} }^2 \ll t,
      \quad
      t \leq T
      \quad\text{and}\quad
      \delta \text{ is sufficiently small,}
    \end{equation}
    we have
    \begin{align}
      \label{eqnVarianceLowerBd}
      \inf_{ \abs{h(x)} < \delta } \E^x \abs{ X_t - x}^2
	&\geq
	\frac{ \sqrt{t} }{ c \delta \abs{\ln \delta}}
      \\
      \label{eqnVarianceUpperBd}
      \text{and}\qquad
      \sup_{ \abs{h(x)} < \delta } \E^x \abs{ X_t - x}^2
        &\leq \frac{c \sqrt{t}}{\delta}.
    \end{align}
  \end{theorem}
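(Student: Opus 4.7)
My plan hinges on the $A$-independent structure of the stream function along the trajectory. Since $v=\gradperp h$ is perpendicular to $\grad h$, Itô's formula gives
\begin{equation*}
  dh(X_t) = \lap h(X_t)\,dt + \sqrt{2}\,\grad h(X_t)\cdot dW_t = -2\,h(X_t)\,dt + \sqrt{2}\,\grad h(X_t)\cdot dW_t,
\end{equation*}
so $h(X_t)$ is a one-dimensional diffusion on $[-1,1]$ whose law is entirely independent of $A$. Standard 1D estimates then give, starting from $h(X_0)=0$, that the local time $L_t^0$ at zero satisfies $\E L_t^0 \asymp \sqrt{t}$, that the number of excursions of $h(X_t)$ away from $0$ with amplitude $\geq \delta$ during $[0,t]$ is of order $\sqrt{t}/\delta$, and that the occupation time $\Lambda_t^\delta := \int_0^t \mathbf{1}_{\{|h(X_s)|<\delta\}}\,ds$ of the separatrix strip is of order $\delta\sqrt{t}$. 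The qualitative picture is that $X_t$ alternates between sojourns inside cells (producing no net displacement, since the closed orbits average out) and brief passages through the strip $\{|h|<\delta\}$ during which it travels ballistically at speed $\sim A$.

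For the upper bound \eqref{eqnVarianceUpperBd} I would use the corrector $\chi$ from \eqref{eqnChi} together with the Itô identity already displayed in the introduction,
\begin{equation*}
  X_t - X_0 = -\paren[\big]{\chi(X_t)-\chi(X_0)} + \sqrt{2}\int_0^t \paren[\big]{I+\grad\chi(X_s)}\,dW_s.
\end{equation*}
Known asymptotic estimates as $A\to\infty$ (see e.g.\ \cite{FannjiangPapanicolaou94,Heinze03,NovikovPapanicolaouEtAl05}) yield $|\chi|\leq CA^{1/4}$, and that $|\grad\chi|^2$ is concentrated in the standard $A^{-1/2}$-boundary layer around the separatrix with unit-cell integral of order $\sqrt{A}$. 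Squaring and taking expectations reduces the upper bound to controlling $\int_0^t \E|I+\grad\chi(X_s)|^2\,ds$. The average of $|\grad\chi|^2$ over the $\delta$-strip is at most $C\sqrt{A}/\delta$, and combined with the occupation estimate $\E\Lambda_t^\delta \leq C\delta\sqrt{t}$ this yields a contribution of size $\sqrt{A}\sqrt{t}=N\sqrt{t}/\delta$, matching \eqref{eqnVarianceUpperBd} up to constants.

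For the lower bound \eqref{eqnVarianceLowerBd} I would proceed constructively. Introduce stopping times $\tau_k<\sigma_k<\tau_{k+1}<\dots$ corresponding to successive entries of the process into the strip $\{|h|<\delta\}$ and subsequent exits into $\{|h|>\delta'\}$ for a fixed $\delta\ll\delta'\ll 1$. The strong Markov property and the $A$-independence of the $h$-equation allow me to show (i) on each cycle the particle, with probability bounded below by some $c>0$, ballistically traverses a full cell edge before its next boundary-layer entry, producing a displacement of order $1$ in one of the four coordinate directions, and (ii) the choices of directions across cycles are sufficiently decorrelated that the squared displacements add. The number of boundary-layer visits during $[0,t]$ is of order $\sqrt{t}/\delta$ as above, but only a fraction $\sim 1/|\ln\delta|$ of these correspond to a completed interior sojourn, since the expected interior exit time from distance $\delta$ of the separatrix is of order $|\ln\delta|$ (the orbit periods inside a cell diverge logarithmically as the saddle is approached). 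Summing the squared contributions produces $\sqrt{t}/(\delta|\ln\delta|)$ and hence \eqref{eqnVarianceLowerBd}.

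The principal difficulty I expect lies in the corrector estimates: the bounds on $\chi$ and $\grad\chi$ in the literature are asymptotic as $A\to\infty$, whereas \eqref{eqnVarianceUpperBd} requires estimates uniform in the starting point $x$ throughout the $\delta$-strip $\{|h|<N/\sqrt{A}\}$ and in $t$ throughout the range \eqref{eqnTRange}, covering exactly the regime where the particle has not yet equilibrated on cell scale. Equally delicate is the ``no-cancellation'' step in the lower bound: one must preclude coherent back-and-forth ballistic motion along the same edges, and it is here that the transverse diffusion and the strong Markov property must be combined carefully, producing the unavoidable $|\ln\delta|$ factor in \eqref{eqnVarianceLowerBd}.
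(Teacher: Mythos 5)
Your central premise is incorrect, and it undermines all three of your quantitative inputs (local time, excursion count, occupation time). You write that $h(X_t)$ is ``a one-dimensional diffusion on $[-1,1]$ whose law is entirely independent of $A$.'' It is not. The It\^o computation gives $dh(X_t) = -2h(X_t)\,dt + \sqrt{2}\,\grad h(X_t)\cdot dW_t$, so the drift is a function of $h$ alone, but the instantaneous variance is $2\abs{\grad h(X_t)}^2$, which depends on the full position $X_t$ and \emph{not} on $h(X_t)$. Thus $h(X_t)$ is not a Markov process, and its law genuinely depends on $A$: the angular motion (which is $A$-dependent) controls how much time $X_t$ spends near cell corners, where $\grad h$ degenerates and $h(X_t)$ slows to a crawl. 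This degeneracy is precisely the phenomenon the theorem's $\abs{\ln\delta}$ factor encodes, and it is also the reason the paper must perform a delicate $(h,\theta)$ analysis near corners (Lemmas~\ref{lmaRhoC}, \ref{lmaEdgeToCorner}, \ref{lmaEtaMainBound}). The ``standard 1D estimates'' $\E L^0_t \asymp \sqrt{t}$, $\#\{\text{excursions}\} \asymp \sqrt{t}/\delta$ and $\E\Lambda^\delta_t \asymp \delta\sqrt{t}$ therefore have no justification as stated; indeed the last two would be wrong without a logarithmic correction, because near a corner the strip $\{\abs{h}<\delta\}$ has width $O(\sqrt{\delta})$ rather than $O(\delta)$ and the process lingers there.

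Beyond that premise, the route you sketch is genuinely different from the paper's, and each half has additional difficulties. For the upper bound you propose squaring the corrector identity; the paper instead decomposes the variance via successive separatrix-crossing times $\tau^i_n$ and uses reflection symmetry to kill the cross terms (equation~\eqref{eqnVar0}). The corrector route requires a uniform bound on $\E^x\abs{\chi(X_t)-\chi(x)}^2$; your cited $\abs{\chi}\leq CA^{1/4}$ gives a contribution of order $\sqrt{A}$, which already exceeds the target $c\sqrt{t}/\delta$ for $t$ near the lower end $(\delta\abs{\ln\delta})^2$ of the admissible range, so the approach cannot close in the regime~\eqref{eqnTRange} without a sharper, $t$-dependent cancellation that you do not supply. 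For the lower bound your mechanism---that only a $1/\abs{\ln\delta}$ fraction of strip visits yield a completed interior sojourn because orbit periods diverge logarithmically---is not what produces the loss. In the paper the $\abs{\ln\delta}$ enters through Lemmas~\ref{lmaBoundaryExit} and~\ref{lmaCellExit}: the expected time to cross the boundary layer is $O(\delta^2\abs{\ln\delta})$ rather than $O(\delta^2)$ because the layer is fat near corners and the drift must carry the particle past the saddle before diffusion can cross. This is an exit-time estimate for the \emph{boundary layer}, not a statement about interior orbit periods, and it is established by constructing super-solutions in the $(h,\theta)$ coordinates. Your ``no-cancellation'' and decorrelation step corresponds roughly to the paper's Lemma~\ref{lmaVarNBounds} together with the symmetry argument in~\eqref{eqnVar0}, but to make it rigorous you would still need the very corner estimates whose absence is the gap identified above.
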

  
  We remark that both~\eqref{eqnBillYoung} and Theorem~\ref{thmVarianceBound} insist that trajectories start close to (or on) cell boundaries.
  This is essential for an anomalous diffusive effect to be observed, and will be explained later.
  Further, the exponent of $t$ appearing on the right of~\eqref{eqnBillYoung} depends on the boundary conditions used.
  The $\sqrt{t}$ growth was observed in~\cites{Young88,YoungPumirEtAl89,Young10,GuyonPomeauEtAl87}.
  In the case of a long strip with no slip boundary conditions on the velocity field, the papers~\cites{YoungPumirEtAl89,CardosoTabeling88} suggest that the variance grows like $t^{1/3}$ instead.

  We prove Theorem~\ref{thmVarianceBound} in Section~\ref{sxnMainProof}, and
  devote the remainder of this section to describing heuristics, the mechanism behind the proof.
  
  \subsubsection{A heuristic explanation.}
  Before delving into the technicalities of the proof, we provide a brief heuristic explanation suggested by W. Young~\cites{Young88,Young10}.
  The typical trajectory of $X$ spends most of its time trapped in cell interiors, which are pockets of recirculation (see figure~\ref{fgrTraj}).
  These particles are ``inert'' and contribute negligibly to the average travel distance.
  The largest contribution to the average travel distance is from the ballistic motion of a small fraction of ``active particles'' in a thin boundary layer around cell boundaries (see figure~\ref{fgrTracers}).

  This boundary layer should naturally be a region where the drift and diffusion balance each other~\cite{FannjiangPapanicolaou94}.
  Precisely, the time taken for the drift to transport a particle around the cell should be comparable to the time time taken for the noise to transport the particle across the boundary layer.
  This suggests that the boundary layer $\mc B_\delta$ should be defined by
  \begin{equation}\label{eqnChooseDelta}
    \mc B_\delta = \{ -\delta < h < \delta \},
    \quad\text{where}\quad
    \delta = \frac{N}{\sqrt{A}}
  \end{equation}
  and $N > 0$ is some constant.
  
  The distance travelled in a direction perpendicular to stream lines is influenced by the noise alone.
  Thus after time $t$, the variance of the perpendicular distance should be of order $t$.
  Hence the fraction of ``active particles'', i.e. particles that remain in $\mc B_\delta$, should be roughly $O( \delta / \sqrt{t} )$.

  \begin{figure}
    \subfigure[$t = .004$]
      {\includegraphics[width=.3\linewidth]{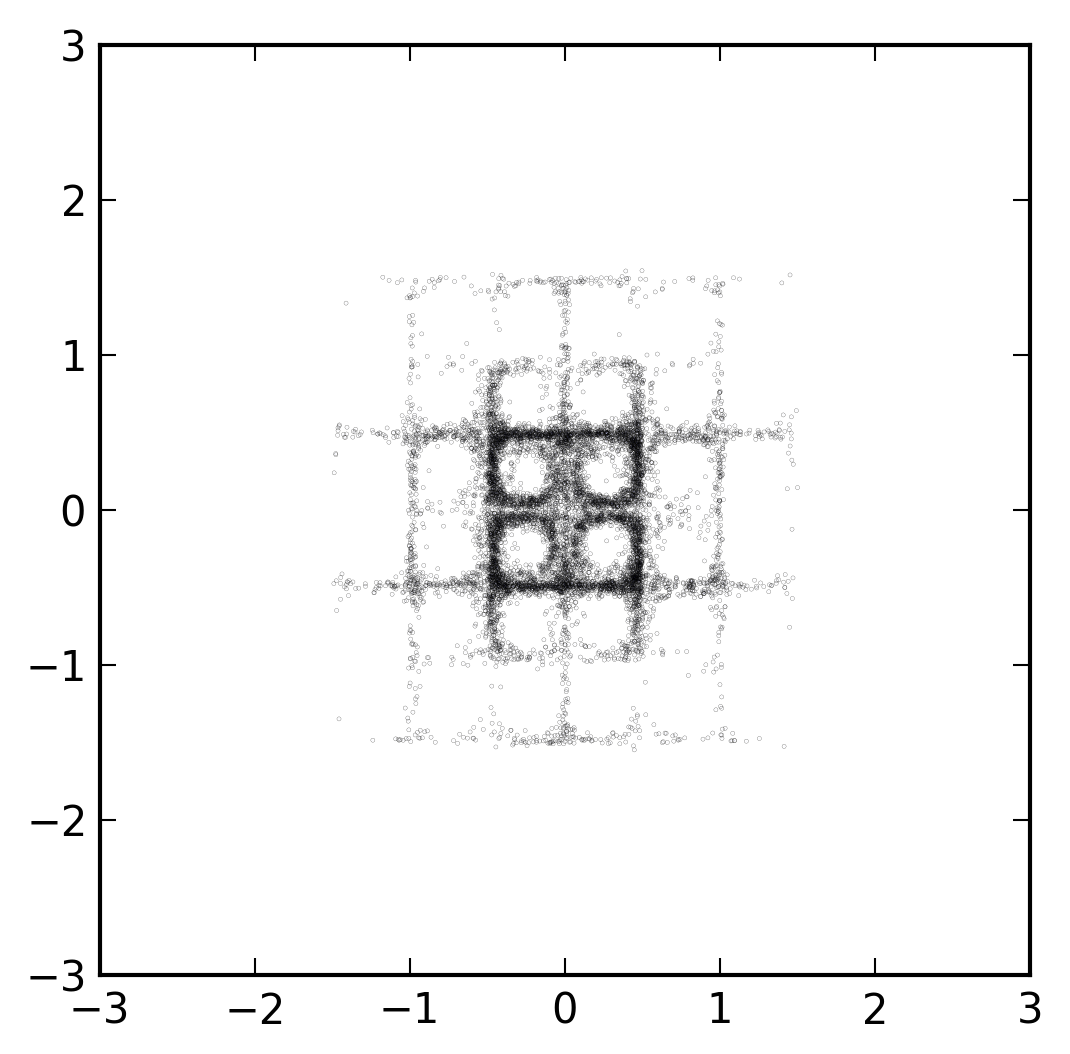}}
    \subfigure[$t = .012$]
      {\includegraphics[width=.3\linewidth]{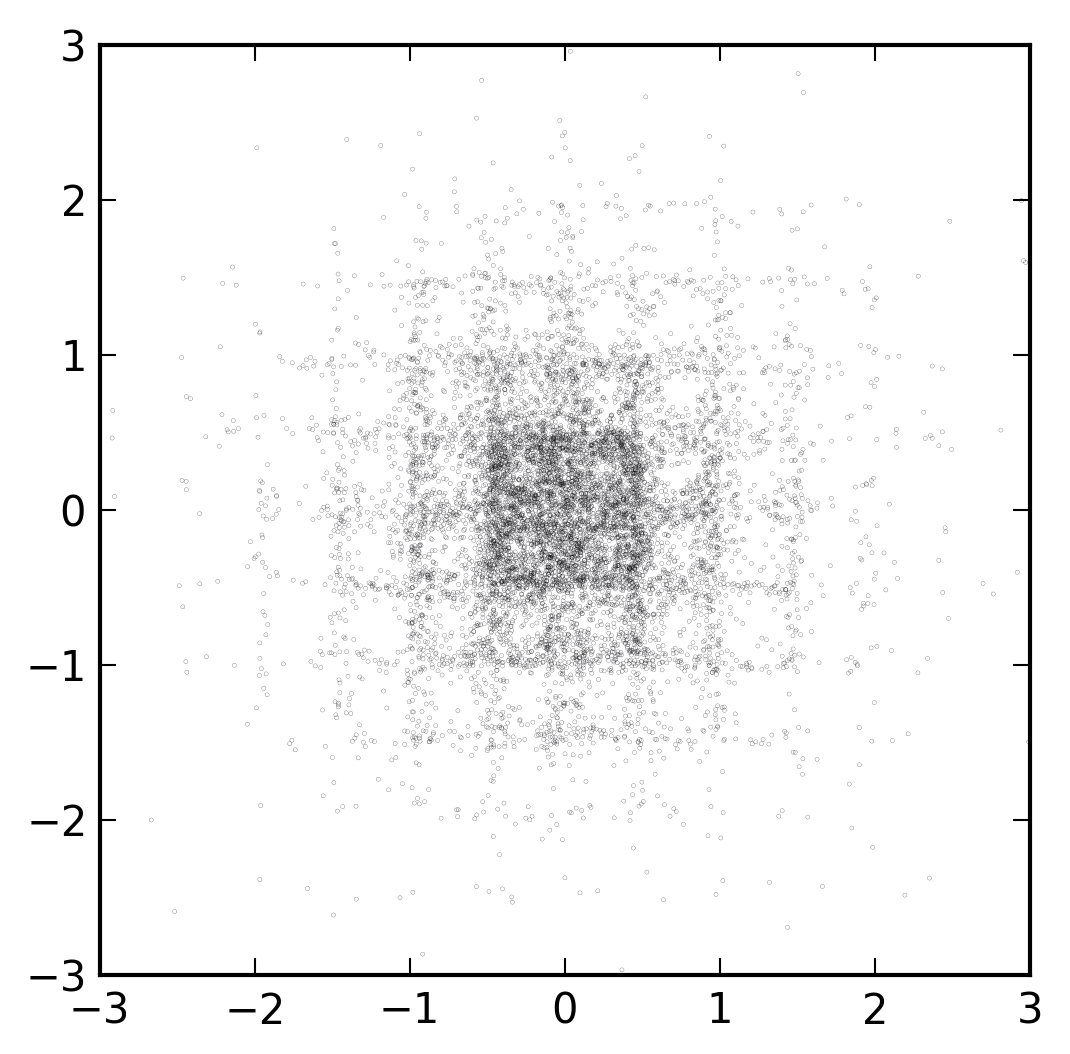}}
    \subfigure[$t = .040$]
      {\includegraphics[width=.3\linewidth]{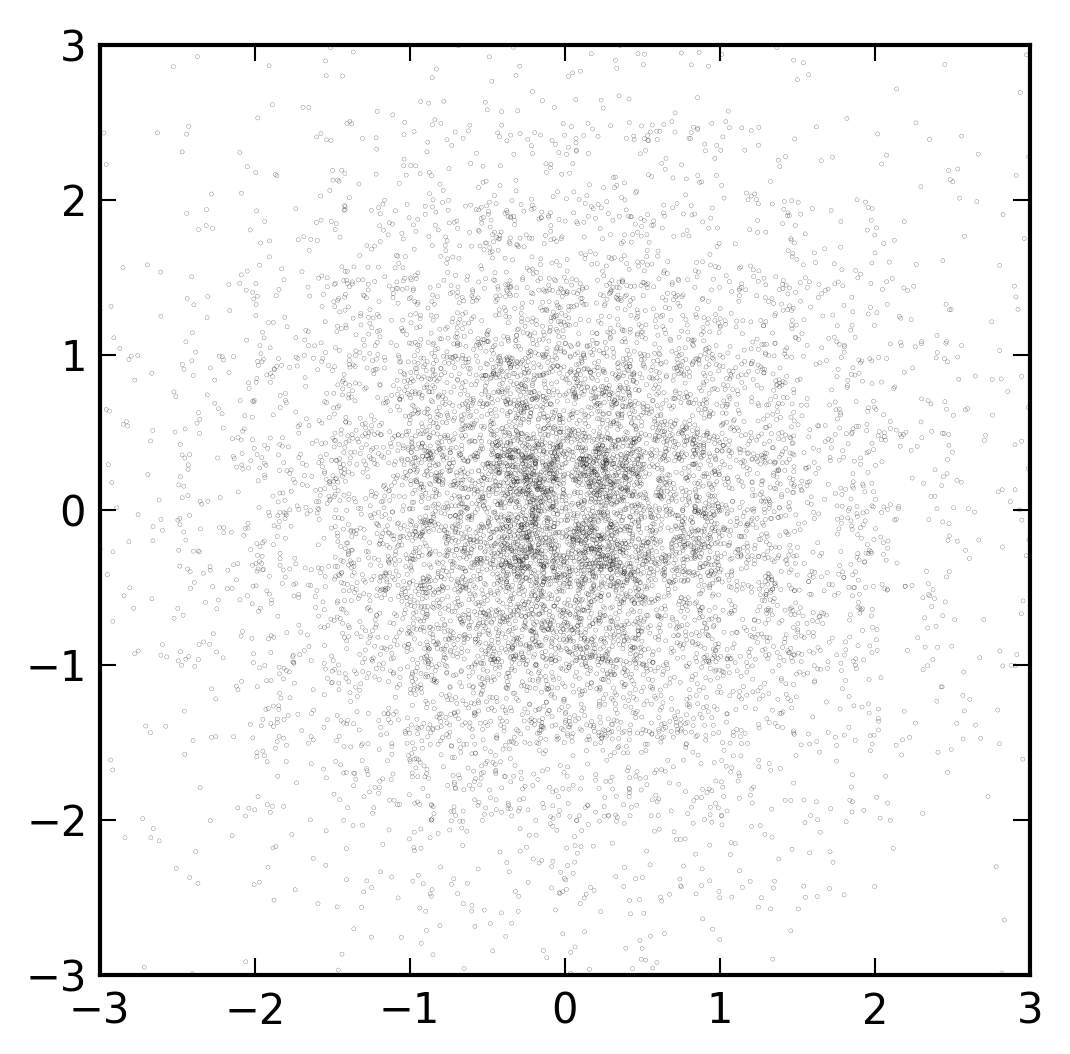}}
    \caption{%
      Numerical simulation of $10,000$ realizations of equation~\eqref{eqnXSDE} with $A = 1000$ and $X_0 = (0, 0)$.
      Initially most particles are ``active'' and travel ballistically near cell boundaries.
      As time increases these disperse into cell interiors becoming ``inert'' and the density approaches a Gaussian.
    }
    \label{fgrTracers}
  \end{figure}
  These ``active particles'' are advected along cell boundaries by the drift, which has magnitude $A$.
  They follow both the horizontal and the vertical cell boundaries in a manner akin to that of a random walk.
  Consequently, their behaviour after time $t$ should be that of a random walk that after $O(At)$ steps of size $O(1)$.
  Thus, the variance of the displacement of the ``active particles'' after time $t$ should be~$O(A t)$.
  Since the remaining particles travel negligible distances, and the fraction of ``active particles'' is $O( \delta /\sqrt{t} )$, the variance of the displacement travelled by \emph{all} particles should be $O( A t \cdot \delta / \sqrt{t}) $.
  This exactly gives~\eqref{eqnBillYoung}.

  The above estimate for the fraction of ``active particles'', and consequently the variance estimate in~\eqref{eqnBillYoung} is only expected to be valid on the time scales $1/A \ll t \ll 1$.
  Of course, for $t \gg 1$ the homogenized behaviour is observed (see also~\cite{Fannjiang02} for a more precise lower bound).
  Finally, we remark that an argument in~\cites{Young88,Young10} suggested that~\eqref{eqnBillYoung} is only valid for $A^{-2/3} \ll t \ll 1$.

  \subsubsection{The logarithmic slow-down and the idea behind a rigorous proof}

  Our approach to proving Theorem~\ref{thmVarianceBound} is by estimating the expected number of times the process $X$ crosses over the boundary layer $\mc B_\delta$.
  First, by solving a classical cell problem~\cites{FreidlinWentzell12,NovikovPapanicolaouEtAl05,FannjiangPapanicolaou94} one can show that that trajectories of $X$ starting on $\del \mc B_\delta$ exit the cell from each of the four edges with nearly equal probably.
  (In particular, trajectories don't directly exit from the closest edge, which is only a distance of $\delta$ away, with overwhelming probably.)
  Consequently, every time the process $X$ crosses $\mc B_\delta$ we expect it to have performed one independent $O(1)$-sized step of a random walk, and $\E^x \abs{X_t - x}^2$ should be comparable to the expected number of boundary layer crossings.

  Note, with this point of view the ``active particles'' from the previous section correspond to trajectories of $X$ that cross $\mc B_\delta$ more often.
  The expected number of boundary crossings will account for the fact that a large fraction of the particles are ``inert''.

  Since the convection is directed entirely in the tangential direction, crossing the boundary layer should be a purely diffusive effect.
  This suggests that the expected number of boundary crossings should be comparable to the expected number of crossings of Brownian motion over the interval $(-\delta, \delta)$.
  A standard calculation~\cite{KaratzasShreve91} shows that this is $O(\sqrt{t}/\delta)$, which immediately gives Theorem~\ref{thmVarianceBound}.

  The difficulty with proving this rigorously is a logarithmic slow down of trajectories near cell corners.
  %
  To elaborate, the typical trajectory of $X$ spends $O(\delta^2)$ time near cell edges where the Hamiltonian is non-degenerate.
  In this region $\mc B_\delta$ has width $\delta$ which can be crossed often by the diffusion alone on $O(\delta^2)$ time scales.
  However, typical trajectories of $X$ spend the much longer $O( \delta^2 \abs{\ln \delta} )$ time near cell corners where the Hamiltonian has a degenerate saddle point.
  This is problematic because in this region $\mc B_\delta$ has thickness $O(\sqrt{\delta})$, and diffusion alone will take too long to cross it unassisted 
  (see for instance~\cite{HaynesVanneste14b}).

  The reason our proof works is because even though trajectories of $X$ are too slow to cross $\mc B_\delta$ near cell corners, the drift moves them away from cell corners in time $O( \delta^2 \abs{\ln \delta})$.
  Once away, they will typically cross in $O( \delta^2 )$ time leading to a logarithmic slow down to~\eqref{eqnBillYoung}. 
  The meat of this paper is spent proving this by performing a delicate analysis of the behaviour in cell corners (Lemmas~\ref{lmaTau1BoundLower} and~\ref{lmaBoundaryExit}).

  We remark that our techniques don't presently show convergence of $X$ to an effective process on intermediate time scales, and we are working towards addressing this issue.
  A forthcoming result by Hairer, Koralov and Pajor-Gyulai~\cite{HairerKoralovEtAl14}, has a construction that might help identify the intermediate time process.
  Explicitly, in~\cite{HairerKoralovEtAl14} the authors rescale the domain, construct a time change that only increases when $X_t \in \mc B_\delta$, and identify both the law of the time change and the time changed process.
  Their proof, however, requires time to be large and does not work on intermediate time scales.

  \subsection{Plan of this paper.}
  
  In Section~\ref{sxnMainProof} we prove Theorem~\ref{thmVarianceBound}, modulo estimating the variance after each boundary layer crossing (Lemma~\ref{lmaVarNBounds}) and estimating the CDF of the boundary layer crossing times (Lemma~\ref{lmaTauINBound}).
  The key step in our proof of Lemma~\ref{lmaTauINBound} is obtaining a good estimate on the first crossing time over the boundary layer (Lemma~\ref{lmaTau1BoundLower}), and is done in Section~\ref{sxnFirstReturnMain}.
  This requires a delicate analysis near cell corners and forms the bulk of this paper.

  In Section~\ref{sxnNthReturn} we complete the proof of Lemma~\ref{lmaTauINBound}  by estimating the higher crossing times in terms of the first crossing time using the strong Markov property.
  Finally, in Section~\ref{sxnVarBound} we estimate the variance after each boundary layer crossing (Lemma~\ref{lmaVarNBounds}), which completes the proof of Theorem~\ref{thmVarianceBound}.

  \subsection*{Acknowledgements}
  We thank William R. Young for bringing our attention to the anomalous diffusive behaviour of tracer particles diffusing in a fast cellular flow.
  We also thank Martin Hairer, Leonid Koralov and Lenya Ryzhik for many stimulating discussions.
  Finally, we thank James T. Murphy III and Yue Pu for pointing out typographical errors in an early draft.

  \section{Proof of the main theorem.}\label{sxnMainProof}

  We devote this section to proving Theorem~\ref{thmVarianceBound}.
  This proof relies on two lemmas, which for clarity of presentation, we prove in subsequent sections.

  \begin{proof}[Proof of Theorem~\ref{thmVarianceBound}.]
    As explained earlier, the basic mechanism is that the process $X$ performs an independent $O(1)$-sized step of a random walk every time it crosses the boundary layer $\mc B_\delta$.
    We start by defining the boundary layer crossing times.
    Let  $\tau_0 = 0$, and recursively define the stopping times
    \begin{gather*}
      \sigma_n = \inf\{t \geq \tau_{n-1} \st X_t \not\in \mc B_\delta \}
      \quad\text{and}\quad
      \tau_n = \inf\{ t \geq \sigma_n \st h(X_t)=       0 \}.
    \end{gather*}
    We intuitively think of $\tau_n$ as the $n^\text{th}$ time $X$ hits the separatrix $\{h = 0\}$, and $\sigma_n$ as the first time after $\tau_n$ that $X$ emerges from the boundary layer $\mc B_\delta$.

    Given the symmetry of the advecting drift, it is convenient to deal with each coordinate of the flow separately.
    When convenient we will use the notation $X_i(t)$ to denote the $i^\text{th}$ coordinate of the flow $X$ at time $t$.
    For $i \in \set{1, 2}$ we define the ``coordinate'' crossing times as follows.
    Let  $\tau^i_0 = 0$, and recursively define
    \begin{gather*}
      \sigma^i_n = \inf\{t \geq \tau^i_{n-1} \st X_t \not\in \mc B_\delta \}
      \quad\text{and}\quad
      \tau^i_n
	= \inf\set{
	    \tau_k \st \tau_k > \tau^i_{n-1} \AND X_i(\tau_k) \in \pi \Z
	  }.
    \end{gather*}
    Intuitively, $\tau^i_n$ is the $n^\text{th}$ time the $i^\text{th}$ coordinate of $X$ hits the separatrix
    \begin{equation*}
      \set{h = 0} = (\R \times \pi \Z) \cup (\pi\Z \times \R).
    \end{equation*}
    Notice the sets $\{\tau^1_n\}$ and $\{\tau^2_n\}$ partition the set $\{\tau_n\}$, except on the null set where $X$ exits a cell exactly at a corner.

    Now we use an elementary telescoping sum to write the variance in terms of the boundary layer crossings.
    Namely, observe first
    \begin{equation*}
      \E^x \abs{ X(t) - x}^2
	= \E^x \abs{ X_1(t) - x_1}^2 +\E^x \abs{ X_2(t) - x_2}^2,
    \end{equation*}
    so it suffices to deal with each coordinate process individually.
    For $i \in \set{1, 2}$ notice
    \begin{align}
      \nonumber
      \E^x \abs{ X_i(t) - x_i}^2
	&= \E^x
	    \abs[\Big]{
	      \sum_{n=1}^{\infty}
		X_i(\tau^{i}_{n}\varmin t) - X_i(\tau^{i}_{n-1}\varmin t)
	    }^2
	  \\
      \label{eqnVar0}
	&=
	  \sum_{n=1}^{\infty}
	    \E^x
	      \abs[\big]{
		X_i(\tau^{i}_{n}\varmin t) - X_i(\tau^{i}_{n-1}\varmin t)
	      }^2.
    \end{align}
    Here we \emph{crucially} used the reflection symmetry of the drift $v$ to ensure that the cross terms in the last expression vanish.

    When $t < \tau^i_n$, the term inside the expectation in~\eqref{eqnVar0} vanishes.
    When $t > \tau^i_{n+1}$, we expect that this term should average to an $O(1)$ quantity.
    Thus each term on the right of~\eqref{eqnVar0} should be comparable to $\prob^x( \tau^i_n \leq t )$.
    We single this out as our first lemma:
    \begin{lemma}\label{lmaVarNBounds}
      There exists a positive constant $c$ such that if $i \in \{1, 2 \}$, $x \in \mc B_\delta$ and~\eqref{eqnTRange} holds, then
      \begin{equation}\label{eqnVarUpper}
	\E^x \abs{X_i(\tau^{i}_{n+1}\varmin t) - X_i(\tau^{i}_{n}\varmin t)}^2
	  \leq c \prob^x\paren{ \tau^i_n \leq t},
      \end{equation}
      and
      \begin{equation}\label{eqnVarLower}
        \E^x \abs{X_i(\tau^{i}_{n+1}\varmin t) - X_i(\tau^{i}_{n}\varmin t)}^2
        \geq
        \frac{1}{c} \prob^x \paren[\big]{ \tau^i_n \leq \frac{t}{2}}
      \end{equation}
    \end{lemma}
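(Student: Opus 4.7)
The upper bound~\eqref{eqnVarUpper} is essentially a deterministic observation. By the very definition of the coordinate crossing times, the $i$-th coordinate $X_i$ cannot pass through a multiple of $\pi$ strictly between $\tau^i_n$ and $\tau^i_{n+1}$; hence $X_i(s)$ stays in a single interval of the form $[k\pi, (k+1)\pi]$ for every $s \in [\tau^i_n, \tau^i_{n+1}]$, so $\abs{X_i(\tau^i_{n+1} \varmin t) - X_i(\tau^i_n \varmin t)} \leq \pi$ on the event $\{\tau^i_n \leq t\}$ and vanishes otherwise. Squaring and taking expectations yields~\eqref{eqnVarUpper} with $c = \pi^2$.

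For the lower bound~\eqref{eqnVarLower}, the plan is to restrict to the event $E \defeq \{\tau^i_n \leq t/2\}$ and to exploit the strong Markov property at the stopping time $\tau^i_n$. On $E$ the residual time $r \defeq t - \tau^i_n$ satisfies $r \geq t/2$, so the problem reduces to proving the uniform bound
\[
  F(y,r) \defeq \E^y \abs{X_i(\tilde\tau_1 \varmin r) - y_i}^2 \geq \frac{1}{c}
\]
for every $y$ on the separatrix with $y_i \in \pi\Z$ and every admissible $r \geq t/2$, where $\tilde\tau_1$ denotes the analogue of $\tau^i_1$ for the process started at $y$; indeed, multiplying by the indicator of $E$ and integrating against the joint law of $(\tau^i_n, X_{\tau^i_n})$ then delivers~\eqref{eqnVarLower}.

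To bound $F(y,r)$ from below I would start from the pointwise inequality $F(y,r) \geq \pi^2 \prob^y\paren{X_i(\tilde\tau_1) = y_i \pm \pi,\; \tilde\tau_1 \leq r}$ and treat the spatial and temporal events separately. The probability that $X_i(\tilde\tau_1) = y_i \pm \pi$ is bounded below by some absolute constant $2\kappa > 0$: this follows from the reflection symmetries of $v$ together with the classical cell-problem estimate recalled in Section~\ref{sxnIntroIntTime}, whereby a trajectory launched from the separatrix exits each of the four edges of the adjacent cell with near-equal probability, two of which give $X_i = y_i \pm \pi$. For the temporal event, Lemma~\ref{lmaTau1BoundLower} will supply an $L^1$ bound of order $\delta^2 \abs{\ln \delta}$ on $\tilde\tau_1$; since~\eqref{eqnTRange} forces $r \geq t/2 \gg (\delta \abs{\ln \delta})^2 \gg \delta^2 \abs{\ln \delta}$, Markov's inequality gives $\prob^y(\tilde\tau_1 > r) \leq \kappa$ once $\delta$ is sufficiently small. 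Combining these estimates produces $F(y,r) \geq \pi^2 \kappa$, as required.

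The upper bound is essentially trivial, and modulo these two ingredients the lower bound is routine bookkeeping with the strong Markov property and the symmetries of $v$. The substantive obstacle is the tail estimate $\prob^y(\tilde\tau_1 > r) \ll 1$, which is precisely where the delicate first-return analysis near cell corners developed in Section~\ref{sxnFirstReturnMain} must be invoked; all of the logarithmic corrections traceable to corner slow-down enter the argument here, through Lemma~\ref{lmaTau1BoundLower}.
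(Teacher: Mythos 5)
Your upper bound argument has a real gap. The deterministic claim that $X_i$ cannot pass through a multiple of $\pi$ strictly between $\tau^i_n$ and $\tau^i_{n+1}$ is false. The crossing times $\tau^i_n$ are a subsequence of the $\tau_k$, and between two consecutive $\tau_k$'s the process spends the time interval $[\tau_k, \sigma_{k+1}]$ inside the boundary layer $\mc B_\delta$. During that interval $X$ may wander along the separatrix freely --- in particular it can pass through $\{x_i \in \pi\Z\}$ as often as it likes --- and none of those passages is recorded as a $\tau_m$, because none of them is preceded by an exit from $\mc B_\delta$. With positive probability $X$ slides along, say, $\{x_2 = 0\}$ past two or more vertical separatrix lines before ever leaving $\mc B_\delta$, so the pathwise bound $\abs{X_i(\tau^i_{n+1}\varmin t) - X_i(\tau^i_n \varmin t)} \leq \pi$ fails. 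The correct statement is that the \emph{second moment} of this displacement is bounded, and establishing this requires the geometric-series argument the paper uses: by Lemma~\ref{lmaEdgeToCorner} the chance of re-entering a corner region before exiting $\mc B_\delta$ is at most $P_0 < 1$, so the number of cells traversed while in the boundary layer is stochastically geometric, giving $\tilde V(t) \leq c/(1-P_0)$. This is the non-trivial content that your deterministic shortcut skips.

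Your lower bound outline is closer, but the two key estimates are also misattributed. First, the temporal estimate: you invoke ``an $L^1$ bound of order $\delta^2\abs{\ln\delta}$ on $\tilde\tau_1$ supplied by Lemma~\ref{lmaTau1BoundLower}.'' No such $L^1$ bound exists --- Lemma~\ref{lmaTau1BoundLower} gives only a CDF lower bound, and the implied tail $\prob(\tau_1 > t) \lesssim \delta\abs{\ln\delta}/\sqrt{t}$ is not integrable, consistent with the paper's remark that $\tau$ is heavy-tailed and $\E\tau$ is ``much too large to be useful.'' (The $O(\delta^2\abs{\ln\delta})$ bound you are thinking of is the one for $\E\sigma_1$ from Lemma~\ref{lmaBoundaryExit}, which is a different stopping time.) The cure is to use the CDF bound directly: under~\eqref{eqnTRange} one has $\prob^y(\tau^i_1 > t/2) \leq c_0\delta\abs{\ln\delta}/\sqrt{t} \ll 1$, without any $L^1$ detour. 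Second, the spatial estimate: you assert that the cell-problem result gives $\prob^y(X_i(\tilde\tau_1) = y_i \pm \pi) \geq 2\kappa$ directly for $y$ on the separatrix. But the classical cell-problem estimate controls exit probabilities for trajectories launched from $\del\mc B_\delta$ inside a cell, not from the separatrix; started on $\{h=0\}$, the process could slide far inside $\mc B_\delta$ before ever reaching $\del\mc B_\delta$, at which point the cell-problem estimate applies to the wrong cell. The paper therefore does a two-stage argument: Lemma~\ref{lmaEdgeToCorner} shows $\prob^y(\abs{X_i(\sigma^i_1) - y_i} < \pi) \geq 1 - P_0$ (the process exits $\mc B_\delta$ before leaving the adjacent cells), and only \emph{then} the cell-problem estimate gives a uniform lower bound on exiting across the desired edge. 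Both of these missing ingredients --- Lemma~\ref{lmaEdgeToCorner} controlling lateral drift within the boundary layer, and the CDF-rather-than-$L^1$ use of the first-return estimate --- are essential and cannot be bypassed.
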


    The next step is to bound $\prob^x( \tau^i_n \leq t)$.
    Intuitively, $\tau^i_n$ should only depend on the movement of $X$ in a direction \emph{transverse} to the convection.
    Thus we should expect to bound $\prob^x( \tau^i_n \leq t)$ in terms of a purely diffusive process.
    Indeed, our next lemma is to show that $\prob^x( \tau^i_n \leq t)$ is comparable to that of Brownian motion.
    \begin{lemma}\label{lmaTauINBound}
      There exists a positive constant $c$ such that if $n \in \N$, $i \in \{1, 2 \}$, $x \in \mc B_\delta$ and~\eqref{eqnTRange} holds, then
      \begin{gather}
	\label{eqnTauINLower}
	\inf_{ \abs{h(x)} < \delta } \prob^x( \tau^i_n \leq t ) \geq
	  \paren[\Big]{1-  \frac{c n \delta \abs{\ln \delta} }{\sqrt{t}} }^+,
	\\
	\llap{\text{and}\qquad}
	\label{eqnTauINUpper}
	  \sup_{ \abs{h(x)} < \delta } \prob^x( \tau^i_n \leq t ) \leq
	    1-\erf  \paren[\Big]{ \frac{n \delta }{c \sqrt{t}} }.
      \end{gather}
    \end{lemma}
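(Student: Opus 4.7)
The plan is to use the strong Markov property to decompose $\tau^i_n$ into a sum of conditionally iid increments modelled on $\tau^i_1$, and then separately establish the two tail estimates. Lemma~\ref{lmaTau1BoundLower} will drive the lower bound on $\prob^x(\tau^i_n \leq t)$ via a standard truncation argument for heavy-tailed sums, while the upper bound will follow from a one-dimensional comparison for $h(X_t)$.

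Writing $Y_k \defeq \tau^i_k - \tau^i_{k-1}$, the strong Markov property at $\tau^i_{k-1}$ gives $\prob(Y_k > s \given \mc F_{\tau^i_{k-1}}) = \prob^{X(\tau^i_{k-1})}(\tau^i_1 > s)$.  Since $X(\tau^i_{k-1}) \in \set{h=0}$, Lemma~\ref{lmaTau1BoundLower} (together with the exit estimate of Lemma~\ref{lmaBoundaryExit}) should furnish a uniform conditional tail bound $\prob(Y_k > s \given \mc F_{\tau^i_{k-1}}) \leq c\delta\abs{\ln \delta}/\sqrt{s}$.  For~\eqref{eqnTauINLower} I would then decompose
\[
\set[\Big]{\tau^i_n > t} \subseteq \set[\Big]{ \textstyle\max_k Y_k > t } \cup \set[\Big]{ \textstyle\sum_{k} (Y_k \varmin t) > t }.
\]
A union bound handles the first event, giving probability at most $nc\delta\abs{\ln \delta}/\sqrt{t}$.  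For the second event, the truncated conditional means satisfy $\E[Y_k \varmin t \given \mc F_{\tau^i_{k-1}}] = \int_0^t \prob(Y_k > s \given \mc F_{\tau^i_{k-1}})\, ds \leq 2c\delta\abs{\ln \delta}\sqrt{t}$, and Markov's inequality yields the same order.  Combining proves~\eqref{eqnTauINLower}.

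For~\eqref{eqnTauINUpper} I would exploit that $v \cdot \grad h \equiv 0$, so that It\^o applied to $Z_t \defeq h(X_t)$ yields $dZ_t = \lap h(X_t)\, dt + \sqrt{2}\, \grad h(X_t) \cdot dW_t$ with no contribution from the convective drift.  Since $\lap h = -2h$ is of size $O(\delta)$ on the relevant $\delta^2$-time scales and $\abs{\grad h} \leq C$, a Dambis-Dubins-Schwarz argument represents the martingale part as a Brownian motion $B_{\rho(t)}$ with $\rho(t) \leq Ct$.  Each boundary-layer cycle $[\tau^i_{k-1}, \tau^i_k]$ requires $Z_t$ to travel from $0$ out to $\pm\delta$ and back, so $\tau^i_n$ is bounded below (up to a harmless constant absorbing the drift $-2h$) by a sum of $n$ iid one-dimensional Brownian return times from $\pm\delta$ to $0$.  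Since each such return time $T$ satisfies $\prob(T > s) \geq \erf(\delta/c\sqrt{s})$, the stable-$1/2$ tail of the sum yields $\prob(\tau^i_n > t) \geq \erf(n\delta/c\sqrt{t})$, which is~\eqref{eqnTauINUpper}.

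The main obstacle is establishing the uniform tail estimate for $\tau^i_1$ used in the first step, in particular when $X(\tau^i_{k-1})$ is close to a cell corner where the logarithmic slowdown is sharpest.  This is precisely the content of Lemma~\ref{lmaTau1BoundLower}, and it is what produces the $\abs{\ln\delta}$ factor in~\eqref{eqnTauINLower} but not in~\eqref{eqnTauINUpper}, where only a pure 1D diffusion comparison is used.
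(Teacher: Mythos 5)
Your overall structure is sound, and where it works it offers a genuinely more elementary alternative to the paper's machinery. The paper establishes the $n\geq 1$ bounds via Lemma~\ref{lmaConvolutionBound}, an abstract convolution inequality $\prob^x(\tau_n^i \leq t) \geq \int_0^t (f')^{*n}$ paired with the explicit identity $f_b * f_a' \geq f_{a+b}$ for $f_a(t) = (1-a/\sqrt t)^+$, while for the upper bound it verifies a super-solution $1-\erf(h/\sqrt{c_0 t})$ and then computes Laplace transforms of the iterated convolutions. Your replacement for the lower bound — truncate at $t$, apply a union bound to $\set{\max_k Y_k > t}$ and Markov's inequality to $\sum_k (Y_k \varmin t)$ — does produce the $O(n\delta\abs{\ln\delta}/\sqrt t)$ tail with a short calculation (the truncated conditional mean $\int_0^t \min(1,\beta/\sqrt s)\,ds \leq 2\beta\sqrt t$ is correct), and avoids the exact convolution identity altogether. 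Your DDS comparison for the upper bound is the probabilistic counterpart of the paper's super-solution argument, and the bound $\tau_1^i \geq \tau_1 \geq \tau(X_{\sigma_1})$ makes the upper bound for $\tau_1^i$ come for free once the pure diffusion comparison for $\tau$ is in place.

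The gap is in the input to this machinery. Your strong Markov decomposition yields $\prob(Y_k > s \given \mc F_{\tau^i_{k-1}}) = \prob^{X(\tau^i_{k-1})}(\tau^i_1 > s)$, so you need a uniform tail bound on $\tau^i_1$, not on $\tau_1$. Lemma~\ref{lmaTau1BoundLower} (even supplemented by Lemma~\ref{lmaBoundaryExit}) controls only $\tau_1$, the first return to $\set{h=0}$ after leaving $\mc B_\delta$. But $\tau^i_1$ is the first such return with $X_i \in \pi\Z$; it equals $\tau_K$ for a random index $K$, and on the complementary event $\set{X_i(\tau_1) \notin \pi\Z}$ the process has to make further crossings. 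Passing from the tail of $\tau_1$ to that of $\tau^i_1$ requires knowing that $K$ has uniformly bounded geometric tails, which in turn rests on showing that $p_i = \sup_x \prob^x(X_i(\tau_1)\in\pi\Z)$ is bounded away from both $0$ and $1$ independently of $A$, and then iterating the $\tau_1$-tail estimate over $K$ steps via a convolution bound. This is exactly the content of the paper's Lemma~\ref{lmaTauI1Bound}, proved there by induction on the event $\set{\tau^1_1 = \tau_n}$ using the bound~\eqref{eqnFnMain}. You flagged this as the ``main obstacle'' but mischaracterized it as a restatement of Lemma~\ref{lmaTau1BoundLower}; it is a separate, nontrivial step that your proposal does not supply.
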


    Note that the right hand side of~\eqref{eqnTauINUpper} is exactly equal to the chance that a standard Brownian motion crosses the interval $(-\delta, \delta)$ at least $n$-times in time $t$.
    The right hand side of~\eqref{eqnTauINLower}, however, is much worse.
    It contains a $\abs{\ln \delta}$ factor, which is not merely a technical artifact, but present because of the logarithmic slow down of $X$ near the degenerate critical points of the Hamiltonian $h$.

    Momentarily postponing the proofs of Lemmas~\ref{lmaVarNBounds} and~\ref{lmaTauINBound}, we finish the proof of Theorem~\ref{thmVarianceBound}.
    Observe equation~\eqref{eqnVar0} and inequalities~\eqref{eqnVarUpper} and~\eqref{eqnVarLower} imply
    \begin{equation}\label{eqnVar1}
      \frac{1}{c}  \sum_{n=1}^\infty  \prob^x\paren[\big]{ \tau_n \leq \frac{t}{2} }
	\leq \E^x \abs{ X(t) - x}^2
	\leq c \sum_{n=1}^\infty  \prob^x ( \tau_n \leq t ).	
    \end{equation}
    Using Lemma~\ref{lmaTauINBound} both sides of the above can be estimated easily.

    Indeed, by~\eqref{eqnTauINUpper} we see
    \begin{equation}\label{eqnSumTauNUpper}
      \sum_{n=1}^\infty \prob^x( \tau^i_n \leq t )
	\leq
	  \sum _{n=1}^\infty
	    \paren[\Big]{
	      1 - \erf \paren[\big]{  \frac{n \delta }{c \sqrt{t}} }
	    }
	\leq \frac{c \sqrt{t}}{\delta}
    \end{equation}
    Here we used the convention that that $c > 0$ is a finite constant, independent of $A$, that may increase from line to line.

    For the lower bound, inequality~\eqref{eqnTauINLower} gives
    \begin{equation}\label{eqnSumTauNLower}
      \sum_{n=1}^\infty \prob^x \paren[\big]{ \tau^i_n \leq \frac{t}{2} }
	\geq \sum_{n=1}^\infty \paren[\Big]{ 1 - \frac{c n \delta \abs{\ln \delta} }{\sqrt{t}} }^+
	\geq \frac{1}{4} \floor[\Big]{ \frac{\sqrt{t}}{c \delta \abs{\ln \delta} } }
	\geq \frac{1}{8} \paren[\Big]{ \frac{\sqrt{t}}{c \delta \abs{\ln \delta} } }.
    \end{equation}
    The last inequality followed because of our assumption in~\eqref{eqnTRange} that guarantees $\delta \abs{\ln \delta} \ll \sqrt{t}$.

    Using~\eqref{eqnVar1}, \eqref{eqnSumTauNUpper} and~\eqref{eqnSumTauNLower} immediately yields both~\eqref{eqnVarianceLowerBd} and~\eqref{eqnVarianceUpperBd} as desired.
    This concludes the proof of Theorem~\ref{thmVarianceBound}, modulo the proofs of Lemmas~\ref{lmaVarNBounds} and~\ref{lmaTauINBound}.
  \end{proof}

  We prove Lemma~\ref{lmaTauINBound} first (in Section~\ref{sxnNthReturn}), as we use it in the proof of Lemma~\ref{lmaVarNBounds}.
  This is the key step in our paper.
  The hardest part in the proof is establishing~\eqref{eqnTauINLower} for $n = 1$, which we do in Section~\ref{sxnFirstReturnMain}.
  Finally, we prove Lemma~\ref{lmaVarNBounds} in Section~\ref{sxnVarBound} using both Lemma~\ref{lmaTauINBound} and ideas used in the proof of Lemma~\ref{lmaTauINBound}.

  \section{A lower bound for the first return time.}\label{sxnFirstReturnMain}

  We devote this section to the proof of the key step in Lemma~\ref{lmaTauINBound}:
  Namely we show that~\eqref{eqnTauINLower} holds for $n = 1$ and the stopping time~$\tau_1$ (Lemma~\ref{lmaTau1BoundLower} in Section~\ref{sxnFirstReturn}).
  This proof of this relies on two central lemmas: bounding the expected exit time from the boundary layer (Lemma~\ref{lmaBoundaryExit} in Section~\ref{sxnBoundaryExit}), and bounding the tail of the exit time from a cell (Lemma~\ref{lmaCellExit} in Section~\ref{sxnCellExit}).
  Both these Lemmas rely on estimating the chance that $X$ re-enters a corner (Lemma~\ref{lmaEdgeToCorner}) which we prove in Section~\ref{sxnCornerEntry}.

  \subsection{The first return to the separatrix.}\label{sxnFirstReturn}

  We devote this subsection to proving~\eqref{eqnTauINLower} for $n = 1$ and the stopping time $\tau_1$.
  For clarity, we state the result here as a lemma.

  \begin{lemma}\label{lmaTau1BoundLower}
    There exists a positive constant $c_0'$ such that if \eqref{eqnTRange} holds, then%
    \begin{equation}
      \label{eqnTau1BoundLowerExplicit}
      \inf_{\abs{h(x)} < \delta} \prob^x( \tau_1 \leq t )
	\geq
       \paren[\Big]{  1- \frac{c_0' \delta \abs{\ln \delta}}{\sqrt{t}} }^+,
    \end{equation}
  \end{lemma}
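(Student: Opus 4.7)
I would start by applying the strong Markov property at the first boundary-layer exit time $\sigma_1$ to split
\begin{equation*}
  \prob^x(\tau_1 > t)
  \leq
  \prob^x(\sigma_1 > t/2)
  + \sup_{\abs{h(y)} = \delta} \prob^y(\tilde\tau_1 > t/2),
\end{equation*}
where $\tilde\tau_1 = \inf\{s \geq 0 : h(X_s) = 0\}$ is the hitting time of the separatrix for the shifted process. This isolates the two mechanisms at play: the time to exit the boundary layer, and the subsequent cell excursion back to the separatrix. These are exactly the quantities controlled by Lemmas~\ref{lmaBoundaryExit} and~\ref{lmaCellExit} respectively.

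For the first term I would use Lemma~\ref{lmaBoundaryExit}, which I expect to supply a bound of the form $\E^x \sigma_1 \leq c(\delta \abs{\ln \delta})^2$ uniformly in $x \in \mc B_\delta$. Markov's inequality then gives
\begin{equation*}
  \prob^x(\sigma_1 > t/2)
  \leq \frac{2c(\delta \abs{\ln \delta})^2}{t}
  \leq \frac{2c\,\delta \abs{\ln \delta}}{\sqrt{t}},
\end{equation*}
the last step being exactly the standing hypothesis $(\delta\abs{\ln\delta})^2 \ll t$ from~\eqref{eqnTRange}. For the second term I would invoke Lemma~\ref{lmaCellExit}, which should produce the matching tail estimate $\prob^y(\tilde\tau_1 > s) \leq c\,\delta \abs{\ln \delta}/\sqrt{s}$ when $\abs{h(y)} = \delta$. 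The structural reason why the scaling $\delta/\sqrt{s}$ emerges is the one-dimensional character of the problem on $h$: It\^o's formula applied to $Y_t = h(X_t)$ annihilates the advection (since $v = \gradperp h$ is orthogonal to $\nabla h$), leaving
\begin{equation*}
  dY_t = -2 Y_t\, dt + \sqrt{2}\, \nabla h(X_t) \cdot dW_t,
\end{equation*}
so after a Dambis-Dubins-Schwarz time change $Y$ is essentially a Brownian motion whose first-passage to $0$ from level $\delta$ is of order $\delta^2$.

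The main obstacle in both of these lemmas is the degeneracy of $\abs{\nabla h}^2$ at cell corners. Away from corners one has $\abs{\nabla h}^2 \sim 1$ and the boundary layer has thickness $O(\delta)$, so the Brownian first-passage heuristic produces the optimal $\delta/\sqrt{t}$ bound with no logarithmic loss. Near a corner, however, $\abs{\nabla h}^2$ vanishes, the boundary layer broadens to thickness $O(\sqrt{\delta})$, and the Hamiltonian has a hyperbolic saddle; diffusion alone is too slow to traverse $\mc B_\delta$ in this region. The hyperbolic advection nonetheless ejects the particle from the corner in the longer time scale $O(\delta^2 \abs{\ln \delta})$, and it is precisely this ejection time that forces the extra $\abs{\ln\delta}$ factor into~\eqref{eqnTau1BoundLowerExplicit}. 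Making this rigorous requires estimating the probability that an ejected trajectory wanders back into a corner before hitting the separatrix, which is the purpose of Lemma~\ref{lmaEdgeToCorner}, and this corner analysis is where the bulk of the technical work will be concentrated.
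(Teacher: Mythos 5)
Your proposal takes essentially the same route as the paper: decompose $\tau_1$ as $\sigma_1$ plus the subsequent hitting time of the separatrix via the strong Markov property, control $\sigma_1$ by Markov's inequality using Lemma~\ref{lmaBoundaryExit}, and control the remaining excursion via Lemma~\ref{lmaCellExit}, with~\eqref{eqnTRange} absorbing the $\sigma_1$ contribution. One small cosmetic point: Lemma~\ref{lmaBoundaryExit} actually gives $\E^x\sigma_1 \leq c\,\delta^2\abs{\ln\delta}$ (a single power of the logarithm), though the weaker $c(\delta\abs{\ln\delta})^2$ you anticipated would also suffice under~\eqref{eqnTRange}.
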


  The proof breaks up naturally into two steps.
  We recall $\tau_1$ is the first time $X$ hits the separatrix \emph{after} exiting the boundary layer.
  Thus to estimate $\tau_1$ we will first estimate the time $X$ takes to exit the boundary layer, and then estimate the time $X$ takes to return to the separatrix.

  As before, both these steps involve only the motion of $X$ across level sets of $h$, and should morally be independent of the convection term.
  There is, however, a logarithmic slow down near cell corners which introduces a logarithmic correction in our estimates.
  We state our results precisely below.


  \begin{lemma}\label{lmaBoundaryExit}
    There exists a constant $c$ such that when $\delta$ is sufficiently small
    \begin{equation}\label{eqnBoundaryExit}
      \sup_{x \in \mc B_\delta} \E^x \sigma_1
	\leq c \delta^2 \abs{\ln \delta}.
    \end{equation}
  \end{lemma}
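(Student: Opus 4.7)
Let $u(x) = \E^x \sigma_1$. Standard theory gives that $u$ solves the Dirichlet problem
\[
    \Delta u - A v \cdot \grad u = -1 \quad \text{in } \mc B_\delta,
    \qquad u = 0 \text{ on } \del \mc B_\delta.
\]
My plan is to build an explicit supersolution $\psi$ satisfying $L\psi \leq -1$ in $\mc B_\delta$ and $\psi \geq 0$ on $\del \mc B_\delta$, where $L = \Delta - A v \cdot \grad$ is the generator; the maximum principle will then yield $u \leq \psi$, and I will arrange $\psi \leq c \delta^2 \abs{\ln \delta}$. Since $v = \gradperp h$ satisfies $v \cdot \grad h \equiv 0$, the natural first attempt is $\psi(x) = F(h(x))$, which reduces the equation to $F''(h) \abs{\grad h}^2 + F'(h) \Delta h = -1$. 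On the part of $\mc B_\delta$ bounded away from the cell corners $\pi \Z \times \pi \Z$ we have $\abs{\grad h} \geq c > 0$, so the choice $F(h) = (\delta^2 - h^2)/c$ is a supersolution and yields the expected $O(\delta^2)$ bound for $u$ there.

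The difficulty is near a corner, say the origin, where $h \approx x_1 x_2$, $\abs{\grad h}$ vanishes linearly, and the boundary $\del \mc B_\delta$ recedes to distance $\sim \sqrt{\delta}$, which is much larger than $\delta$. To handle this, I would localize to a patch $\mc C$ of radius $O(\sqrt{\delta})$ around each corner and exploit the hyperbolic structure of the linearized drift $Av \approx A(-x_1, x_2)$. The unstable eigenvalue $+A$ amplifies the $x_1$-coordinate exponentially, so even from a starting point arbitrarily close to the corner the process is pushed outside $\mc C$ in time $O(A\inv \abs{\ln(A\delta)}) = O(\delta^2 \abs{\ln \delta})$. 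Concretely, I would seek a corner supersolution of the form $\alpha - \beta \ln(\abs{x_1}\varmax\delta)/A$ (or a smooth variant thereof) whose boundary values on $\del \mc C$ are dominated by the $F(h)$ supersolution, and verify $L\psi \leq -1$ by direct computation using the linearized drift. Iterating via the strong Markov property, or patching the two supersolutions on $\mc C$ and $\mc B_\delta \setminus \mc C$, then gives the desired bound.

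The main obstacle is the corner analysis itself: producing a function that simultaneously handles the drift-dominated regime deep in $\mc C$ (where diffusion is too slow to cross the $\sqrt{\delta}$-thick boundary layer unassisted) and transitions smoothly to the diffusive regime along the edges, while still satisfying $L\psi \leq -1$ through the interface. The logarithmic factor is intrinsic, equal to the time the hyperbolic drift needs to amplify a $\delta$-scale perturbation up to the $\sqrt{\delta}$-scale boundary. A cleaner alternative I would try in parallel is purely probabilistic: apply It\^o to $h(X_t)$ to get the one-dimensional SDE $dh(X_t) = \sqrt{2}\,\abs{\grad h(X_t)}\,dB_t - 2 h(X_t)\,dt$ and lower-bound its accumulated quadratic variation by exploiting the exponential divergence of $X_1$ from the corner; I expect Lemma~\ref{lmaEdgeToCorner} to enter as the mechanism ensuring that after a corner excursion the process does not immediately return to another corner, so that only a bounded number of corner passes need be accounted for.
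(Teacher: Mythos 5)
Your decomposition into edge and corner regions, with a $(\delta^2-h^2)$ supersolution on the edge and a logarithmic-in-$x_1$ supersolution on the corner, is exactly the route the paper takes (Lemmas~\ref{lmaRhoE} and~\ref{lmaRhoC}), and you correctly anticipate that Lemma~\ref{lmaEdgeToCorner} enters to control repeated corner visits. Two concrete adjustments are needed, though. First, the ``interface'' obstacle you single out --- building one supersolution that transitions smoothly across the edge/corner boundary --- is avoidable and is not what the paper does: rather than gluing, the paper applies the strong Markov property together with the re-entry bound $P_0<1$ of Lemma~\ref{lmaEdgeToCorner} to obtain
\[
  \norm{\E\sigma_1}_{L^\infty(\mc B_\delta)}
  \;\leq\; P_0\,\norm{\E\sigma_1}_{L^\infty(\mc B_\delta)}
  + \norm{\E\rho_e}_{L^\infty(\mc E)}
  + \norm{\E\rho_c}_{L^\infty(\mc C')},
\]
and hence $\E\sigma_1 \leq (1-P_0)^{-1}(\E\rho_e + \E\rho_c)$, with no derivative matching at $\del\mc C$. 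The paper also introduces a fattened corner $\mc C'\supset\mc C$ and correspondingly thinned edge $\mc E'\subset\mc E$; the annular buffer between them is what makes $P_0<1$ applicable after each corner exit, so keep this detail if you formalize the iteration.

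Second, the inner piece of your corner ansatz $\alpha - \beta\ln(\abs{x_1}\varmax\delta)/A$ is constant for $\abs{x_1}\leq\delta$. There the supersolution inequality $-\psi'' - A\sin(x_1)\cos(x_2)\,\psi'\geq 1$ degenerates (since $\sin x_1\to 0$ at the saddle) to $-\psi''\geq 1$, which a constant fails ($0\geq 1$): at the corner the drift vanishes and diffusion alone must do the ejecting, so the inner piece must be genuinely concave. The paper's fix is $a_0 - x_1^2/2$ on $\abs{x_1}\leq 3/(2\sqrt A)$, matched $C^1$ to the outer logarithmic piece $\tfrac{18}{A}\ln(\gamma_0/x_1)$; your hedge ``or a smooth variant thereof'' needs to be made concrete in exactly this form, and it is this concavity requirement (not the interface with the edge region) that is the real constraint near the saddle.
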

  \begin{lemma}\label{lmaCellExit}
    Let $\tau = \inf\{ t \geq 0 \st h(X_t) = 0 \}$ be the hitting time of $X$ to the separatrix $\{h = 0\}$.
    There exists a positive constant $c$ such that if~\eqref{eqnTRange} holds, then
    \begin{equation}\label{eqnTauLower}
      \inf_{x \in \overline{\mc B}_\delta} \prob^x( \tau \leq t ) \geq 
	\paren[\Big]{ 1- \frac{c \delta \abs{\ln \delta} }{\sqrt{t}} }^+.
    \end{equation}
  \end{lemma}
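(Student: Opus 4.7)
My plan is to reduce the problem via a Dambis--Dubins--Schwarz time change applied to $Y_t := h(X_t)$. Since $v \cdot \grad h = 0$ and $\lap h = -2h$, It\^o's formula gives
\[
  dY_t = -2 Y_t \, dt + \sqrt{2}\, \grad h(X_t) \cdot dW_t,
\]
and DDS writes the martingale part as $\tilde B_{[M]_t}$ for a standard Brownian motion $\tilde B$, where $[M]_t := 2\int_0^t |\grad h(X_s)|^2 \, ds$. Because the drift $-2Y_t$ is attractive, a pathwise comparison with $Z_t := Y_0 + M_t$ gives $\tau \leq \inf\set{t \geq 0 : [M]_t \geq T^*}$, where $T^* := \inf\set{s \geq 0 : \tilde B_s = -Y_0}$ is a Brownian first-hitting time. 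Using the reflection-principle estimate $\prob(T^* > s) \leq c |Y_0|/\sqrt{s} \leq c\delta/\sqrt{s}$ together with a union bound at the threshold $s_0 := c_0 t/|\ln\delta|^2$, the lemma reduces to the quadratic-variation tail bound
\[
  \prob^x\paren[\Big]{ [M]_t < \frac{c_0\, t}{|\ln\delta|^2}} \leq \frac{c\,\delta\,|\ln\delta|}{\sqrt{t}}.
\]

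The quadratic-variation estimate is the main technical obstacle, and this is where the other results of this section come in. The heuristic is that during each cycle of travel around the separatrix, the edge sojourn contributes $\Theta(\delta^2)$ to $[M]$ (since $|\grad h|^2$ is bounded below away from corners), while Lemma~\ref{lmaBoundaryExit} controls each cycle duration in expectation by $O(\delta^2 |\ln\delta|)$. Hence $[M]_t$ advances at an average rate of order $1/|\ln\delta|$, safely above the threshold $1/|\ln\delta|^2$ needed above. To make this rigorous, I would decompose the trajectory by the strong Markov property at successive entries to and exits from a small corner neighborhood; invoke Lemma~\ref{lmaEdgeToCorner} to dominate the number of consecutive corner re-entries by a geometric random variable; and apply a second-moment (Chebyshev) inequality to the sum of (approximately i.i.d.) cycle contributions to $[M]_t$.

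The central difficulty is the corner analysis: near the degenerate saddle of $h$ the dwell times have heavy tails, so the lower bound on $[M]_t$ cannot come from a naive Markov inequality on $\E^x \tau$. The role of Lemma~\ref{lmaEdgeToCorner} is precisely to control these heavy tails at the level of individual cycles by ensuring that, after a corner visit, the process lands on a nearby edge with positive probability rather than immediately re-entering another corner. The logarithmic slow-down appearing in~\eqref{eqnTauLower} faithfully reflects the resulting corner-trapping effect.
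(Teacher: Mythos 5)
Your DDS time-change reduction is a genuinely different strategy from the paper's. The paper works with $\psi(x,t)=1-\prob^x(\tau\le t)$ through its Laplace transform $\varphi$, which satisfies a resolvent equation $-\lap\varphi+Av\cdot\grad\varphi+\lambda\varphi=1$, and bounds $\varphi$ by an explicit super-solution $\varphi_e+\varphi_c$; the corner contribution $\varphi_c$ is then controlled (Lemma~\ref{lmaEtaMainBound}) by constructing a periodic function $g(\theta)$ solving a first-order ODE with a carefully balanced forcing $f$. Your comparison $Z_t = Y_0 + M_t$, $\tau \le \inf\{t : [M]_t \ge T^*\}$, and the split $\prob(\tau>t)\le \prob([M]_t < s_0) + \prob(T^*>s_0)$ are all sound, and the second term is handled cleanly by the reflection principle together with $|h(x)|\le\delta$. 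However, everything now rides on the quadratic-variation tail bound
\[
  \prob^x\paren[\Big]{[M]_t < \frac{c_0 t}{|\ln\delta|^2}} \le \frac{c\,\delta\,|\ln\delta|}{\sqrt t},
\]
and this is where there is a real gap. You want to deduce it from a cycle decomposition plus Lemma~\ref{lmaEdgeToCorner} plus Chebyshev, but the pieces you have to work with only give first moments: Lemmas~\ref{lmaRhoE} and~\ref{lmaRhoC} bound $\E\rho_e$ and $\E\rho_c$, not their fluctuations, and the $[M]$-increment over a single cycle is a random quantity that can be far smaller than $\delta^2$ (if the process enters the edge region near its boundary and immediately re-enters a corner, or exits $\mc B_\delta$ after a short stay). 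Moreover, once the process leaves $\mc B_\delta$, the cycle structure built around Lemma~\ref{lmaEdgeToCorner} no longer applies, and the bound has to account separately for excursions into the cell interior and the degeneracy of $|\grad h|$ near the cell center, neither of which the sketch addresses. The paper's own comment before the proof of Lemma~\ref{lmaEtaMainBound} --- that a geometric-series argument based on Lemma~\ref{lmaEdgeToCorner} ``will not work directly'' because $P_0$ degenerates when the effective layer thickness is larger than $\delta$ --- is a warning sign that a finer quantitative use of the layer geometry is needed than a bare Markov/Chebyshev bound; the paper resolves this by the super-solution construction, whereas your sketch does not yet contain an analogous mechanism. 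So: the reduction is valid and appealing, but the central estimate is asserted rather than proved, and the tools you have named do not obviously deliver it.
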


  The proofs of both Lemmas~\ref{lmaBoundaryExit} and~\ref{lmaCellExit} are somewhat involved, and are the main ``technical content'' of this paper.
  If the Hamiltonian $h$ is non-degenerate, then one can easily show that the expected exit time of $X$ from $\mc B_\delta$ is comparable to $\delta^2$: the expected exit time of Brownian motion from the interval $(-\delta, \delta)$.

  In our case, however, the Hamiltonian $h$ is degenerate exactly at the cell corners.
  One wouldn't expect this to be problematic \emph{provided} $X$ did not spend too much time near the cell corners.
  Unfortunately, the process $X$ spends \emph{most} of the time near cell corners and compounds the problem.
  Precisely, when $X$ is in the boundary layer, it spends $O(\delta^2)$ time near cell edges (where $\grad h$ is non-degenerate) and $O(\delta^2 \abs{\ln \delta})$ near corners (where $\grad h$ degenerates).
  
  The estimate for $\tau$ is plagued with similar problems. 
  Further, the distribution of $\tau$ is heavy tailed and $\E \tau$ is much too large to be useful.
  Thus we are forced to take a somewhat indirect approach to Lemma~\ref{lmaCellExit}.
  We do this by estimating the Laplace transform of the CDF of $\tau$.

  Once Lemmas~\ref{lmaBoundaryExit} and~\ref{lmaCellExit} are established, however, Lemma~\ref{lmaTau1BoundLower} follows immediately.
  We present this below.
  \begin{proof}[Proof of Lemma~\ref{lmaTau1BoundLower}]
    Clearly
    \begin{equation*}
      \tau_1(x) = \sigma_1(x) + \tau( X_{\sigma_1(x)} ),
    \end{equation*}
    and so
    \begin{multline}\label{eqnPTau1GeqT}
      \prob^x\paren{ \tau_1 \leq t }
	\geq \prob\paren[\Big]{ \sigma_1(x) \leq \frac{t}{2} \text{ and } \tau( X_{\sigma_1(x)} ) \leq \frac{t}{2} }
	\\
	= \E^x \paren[\Big]{ \Chi{ \sigma_1 \leq t/2 } \E^{X_{\sigma_1}} \Chi{ \tau \leq t/2}  }
	\geq \prob^x \paren[\Big]{ \sigma_1 \leq \frac{t}{2} } \inf_{\abs{h(y)} = \delta } \prob^{y} \paren[\Big]{ \tau \leq \frac{t}{2}}.
    \end{multline}

    The second term on the right we can bound by Chebyshev's inequality and Lemma~\ref{lmaBoundaryExit}.
    Namely,
    \begin{equation*}
      \prob^x \paren[\big]{ \sigma_1 \leq \frac{t}{2} }
	\geq 1 - \frac{2}{t} \E^x \sigma_1
	\geq 1 - \frac{c \delta^2 \abs{\ln \delta}}{t}.
    \end{equation*}
    Thus Lemma~\ref{lmaCellExit} and inequality~\eqref{eqnPTau1GeqT} show
    \begin{equation*}
      \prob^x\paren{ \tau_1 \leq t }
	\geq
	  \paren[\Big]{ 1 - \frac{c \delta^2 \abs{\ln \delta}}{t} }
	  \paren[\Big]{  1- \frac{c \delta \abs{\ln \delta}}{\sqrt{t}} }
	\geq
	  1 - \frac{c \delta \abs{\ln \delta} }{\sqrt{t}} - \frac{c \delta^2 \abs{\ln \delta}}{t}.
    \end{equation*}
    By assumption~\eqref{eqnTRange}, the third term on the right can be absorbed into the second by increasing the constant $c$.
    This proves~\eqref{eqnTau1BoundLowerExplicit} as desired.
  \end{proof}

  \subsection{The exit time from the Boundary Layer.}\label{sxnBoundaryExit}
  
  In this subsection we aim to prove Lemma~\ref{lmaBoundaryExit}.
  As mentioned earlier, the main difficulty is that the process $X$ spends ``most'' of the time near cell corners where the Hamiltonian is degenerate.
  The main idea behind our proof is as follows: First, by constructing an explicit super-solution, we show that $X$ leaves the vicinity of cell corners in time $\delta^2 \abs{\ln \delta}$.
  Next, we show that the chance that $X$ ``re-enters'' a corner is bounded above by a constant $P_0 < 1$.
  Now using a geometric series argument we bound the expected exit time.

  To make this precise we need to introduce the natural action-angle coordinates associated to the Hamiltonian $h$.
  Recall the \emph{separatrix} is the set $\{h = 0\}$, and a \emph{cell} is a connected component of the complement of the separatrix.
  Fix a cell $Q_0$ with center $q_0 = (q_{0,1}, q_{0,2})$.
  Let $\theta$ be the solution of the PDE
  \begin{equation*}
    \begin{beqn}
      \grad \theta \cdot \grad h = 0
	& in $Q_0 - \{q_0 + (x, 0) \st x \geq 0\}$\\
      \theta(x_1, x_2)
	= \sign( h(q_0) )
	  \brak[\Big]{
	    \tan\inv\paren[\Big]{ \frac{x_2 - q_{0,2}}{x_1 - q_{0,1}} }
	    + \frac{\pi}{4}
	  }
	& on $\del Q_0$.
    \end{beqn}
  \end{equation*}
  The above boundary condition ensures that on streamlines of $v$, $\theta$ increases in the direction of $v$.
  Explicitly, $\theta$ increases in the counter-clockwise on cells where $h$ is positive, and in the clockwise on cells where $h$ is negative.
  Note further that on cell corners we have $\theta = n \pi / 2$ for $n \in \Z$.

  The map $x \mapsto (h, \theta)$ defines the natural action-angle coordinates local to each cell.
  For a pair of adjacent cells, we shift the angular coordinate in one cell by a multiple of $\pi$ to ensure continuity of this coordinate.
  By abuse of notation we still use $(h, \theta)$ to denote the local coordinates on a \emph{pair} of adjacent cells.
  This will be used repeatedly to obtain estimates along cell edges.

  Using the $(h, \theta)$ coordinates we define the ``corner'' and ``edge'' regions as follows.
  Fix $\beta_0 > 0$ to be some small constant, and define
  \begin{equation}\label{eqnCornerEdgeDef}
    \mc C \defeq
      \set[\big]{
	x \in \mc B_\delta \st
	\abs[\Big]{\theta(x) - \frac{n \pi}{2}} < \beta_0
	\text{ for } n \in \set{0, \dots, 3}
      },
    \quad\text{and}\quad
    \mc E \defeq \mc B_\delta - \overline{ \mc C }.
  \end{equation}
  Connected components of $\mc C$ are neighbourhoods of cell corners, and connected components of $\mc E$ are neighbourhoods of cell edges.
  Since the Hamiltonian $h$ is only degenerate in cell corners, we know $\abs{\grad h}$ and $\abs{\grad \theta}$ are bounded below away from $0$ on each connected component of $\mc E$.

  We are now ready to precisely state the lemmas required to prove Lemma~\ref{lmaBoundaryExit}.
  We begin with the time taken to exit the edge and corner regions.
  \begin{lemma}\label{lmaRhoE}
    Let $\rho_e$ be the first exit time of $X$ from the edge region $\mc E$.
    There exists a constant $c$ (independent of $\delta$) such that
    \begin{equation*}
      \norm{\E \rho_e}_{L^\infty(\mc E)} \leq c \delta^2.
    \end{equation*}
  \end{lemma}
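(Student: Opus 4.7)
The plan is to exploit the fact that on the edge region $\mc E$ the Hamiltonian is non-degenerate and build an explicit supersolution of the generator. Write $L = \lap - Av \cdot \grad$ for the generator of $X$. Because $v = \gradperp h$ is divergence-free and perpendicular to $\grad h$, any function of $h$ alone is killed by the advection. I would therefore test with
\begin{equation*}
  \phi(x) \defeq \delta^2 - h(x)^2,
\end{equation*}
which is non-negative on all of $\mc B_\delta$ (and in particular on $\overline{\mc E}$). A direct computation using $v \cdot \grad h = 0$ and $\lap h = -2h$ gives
\begin{equation*}
  L\phi = -2\abs{\grad h}^2 - 2h \lap h = -2\abs{\grad h}^2 + 4 h^2.
\end{equation*}

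The point is that cell corners are the only zeroes of $\grad h$, and by the definition~\eqref{eqnCornerEdgeDef} the edge region $\mc E$ is bounded away from them. So there exists a constant $c_0 = c_0(\beta_0) > 0$ with $\abs{\grad h}^2 \geq c_0$ on $\overline{\mc E}$. Since $\abs{h} \leq \delta$ on $\mc B_\delta$, we obtain
\begin{equation*}
  L\phi \leq -2 c_0 + 4\delta^2 \leq -c_0 \qquad \text{on } \mc E,
\end{equation*}
provided $\delta$ is small enough.

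Next I would apply Dynkin's formula to $\phi$ and the bounded stopping time $\rho_e \varmin T$. Since $\mc B_\delta$ is bounded and $\phi$ and $L\phi$ are bounded on it, and since standard SDE estimates ensure $\rho_e < \infty$ almost surely with $\E^x \rho_e < \infty$, letting $T \to \infty$ and using dominated convergence yield
\begin{equation*}
  \E^x \phi(X_{\rho_e}) - \phi(x) = \E^x \int_0^{\rho_e} L\phi(X_s)\,ds \leq -c_0 \E^x \rho_e.
\end{equation*}
Using $\phi \geq 0$ on $\overline{\mc E}$ and $\phi(x) \leq \delta^2$ rearranges to $\E^x \rho_e \leq \delta^2 / c_0$, uniformly over $x \in \mc E$, which is the desired bound.

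There is really no substantial obstacle here: the only thing to check carefully is the lower bound $\abs{\grad h}^2 \geq c_0 > 0$ on $\overline{\mc E}$, which is immediate from the explicit form $h = \sin x_1 \sin x_2$ and the choice of $\beta_0$. The drift $Av \cdot \grad$, despite being large, plays no role in the estimate because $\phi$ is constant along streamlines; this is the crucial structural feature that makes a uniform $O(\delta^2)$ bound possible independently of $A$.
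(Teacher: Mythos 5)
Your proof is correct and is essentially the same as the paper's: both use the supersolution $\delta^2 - h^2$ (the paper writes $\bar\varphi_e = \alpha(\delta^2 - h^2)$ and applies the maximum principle to the Poisson equation for $\E^x\rho_e$, while you apply Dynkin's formula directly; these are the probabilistic and PDE phrasings of the identical estimate). The key observation in both cases is that $v\cdot\grad h = 0$ kills the drift term and $\abs{\grad h}$ is bounded below on $\overline{\mc E}$.
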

  \begin{lemma}\label{lmaRhoC}
    Let $\beta_0' > \beta_0$ and define the fattened corner region
    \begin{equation}\label{eqnFatCornerEdgeDef}
      \mc C' \defeq
	\set[\big]{
	  x \in \mc B_\delta \st
	  \abs[\Big]{\theta(x) - \frac{n \pi}{2}} < \beta_0'
	  \text{ for } n \in \set{0, \dots, 3}
	},
      \quad\text{and}\quad
      \mc E' \defeq \mc B_\delta - \overline{ \mc C }'.
    \end{equation}
    Let $\rho_c$ be the first exit time of $X$ from the fattened corner region $\mc C'$.
    If $\beta_0, \beta_0'$ are sufficiently small then there exists a constant $c$ independent of $A$ such that
    \begin{equation*}
      \norm{\E \rho_c}_{L^\infty(\mc C')} \leq c \delta^2 \abs{\ln \delta}.
    \end{equation*}
  \end{lemma}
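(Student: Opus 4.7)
The plan is a supersolution argument: construct a smooth function $\phi \geq 0$ on $\overline{\mc C'}$ satisfying $L\phi \leq -1$ on $\mc C'$, where $L = \Delta - Av\cdot\grad$ is the generator of $X$. Then by Dynkin's formula $\E^x\rho_c \leq \phi(x)$, and the lemma reduces to producing such a $\phi$ with $\sup_{\mc C'} \phi \leq c\delta^2\abs{\ln\delta}$.

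By the discrete symmetries of the Hamiltonian $h$, it suffices to construct $\phi$ on the connected component of $\mc C'$ containing points near the corner at the origin. Taylor expanding gives $v(x_1,x_2) = (-x_1, x_2) + O(\abs{x}^3)$, so the drift is $(Ax_1, -Ax_2) + O(A\abs{x}^3)$; the $x_1$-axis is the unstable manifold of the deterministic flow and is the coordinate along which the process escapes. Consider
\[
  \phi(x_1,x_2) \defeq \frac{1}{A}\ln\frac{1 + AY_0}{1 + Ax_1^2},
\]
where $Y_0 \defeq \sup_{\overline{\mc C'}} x_1^2$. Setting $y = Ax_1^2$, a direct computation yields
\[
  \partial_1^2\phi + Ax_1\,\partial_1\phi
    = -\frac{2(1+y^2)}{(1+y)^2} \leq -1
\]
uniformly in $y \geq 0$, while $\partial_2\phi \equiv 0$. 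Hence in the linearized model $L\phi \leq -1$ throughout, and since $Y_0 = O(1)$ (as $\mc C'$ is a bounded corner neighborhood whose size depends only on $\beta_0'$) while $A = N^2/\delta^2$, we find $\sup\phi = \phi(0,0) = \frac{1}{A}\ln(1+AY_0) \leq c\delta^2\abs{\ln\delta}$, exactly the target bound. Nonnegativity of $\phi$ on both boundary pieces $\{\abs{\theta - n\pi/2} = \beta_0'\}$ and $\{\abs{h} = \delta\}$ follows from the choice of $Y_0$; the four corners of each cell are handled identically, with the role of $x_1$ taken by the local unstable coordinate (which alternates between $x_1$ and $x_2$ from one corner to the adjacent one).

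The main obstacle is passing from the linearized computation to the actual drift $-Av$. On $\mc C'$ the nonlinear correction is of size $O(A\abs{x}^3)$, comparable to the leading term $Ax_1$ only when $\abs{x} = O(1)$. By shrinking $\beta_0$ and $\beta_0'$ so that $\mc C'$ lies in a genuinely small neighborhood of the corner, the exact quantity $A\sin x_1\cos x_2\,\partial_1\phi$ differs from its linear counterpart $Ax_1\,\partial_1\phi$ only by the factor $\cos x_2$ (bounded below by a positive constant near the corner) together with a higher-order perturbation strictly smaller than the main term. After multiplying $\phi$ by a sufficiently large constant independent of $\delta$, this perturbation is absorbed and $L\phi \leq -1$ still holds throughout $\mc C'$; the extra factor merely inflates $\sup\phi$ by a $\delta$-independent amount, so the bound $c\delta^2\abs{\ln\delta}$ is preserved.
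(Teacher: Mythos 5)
Your proof is correct and takes essentially the same route as the paper: construct a supersolution for $-\lap + A v\cdot\grad$ on the corner component that depends only on the local unstable coordinate $x_1$, and exploit $\sin x_1 \cos x_2 \asymp x_1$ on $\mc C'$ (for $\beta_0'$ small) to absorb the nonlinearity into a constant prefactor. The only cosmetic difference is that the paper glues a quadratic $g_0$ near the origin to a logarithm $g_1$ farther out at the matching point $\bar x_1 = 3/(2\sqrt A)$, whereas your single smooth ansatz $\phi = A^{-1}\ln\bigl((1+AY_0)/(1+Ax_1^2)\bigr)$ interpolates both behaviours in one formula, which tidies up the verification without changing the substance; both give $\sup\phi = O(\ln A / A) = O(\delta^2|\ln\delta|)$.
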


  Next, we state a lemma estimating probability that $X$ re-enters a corner.

  \begin{lemma}\label{lmaEdgeToCorner}
    Let $\rho_e$ be the first exit time of $X$ from $\mc E$.
    There exists a constant $P_0 = P_0( \beta_0, \beta_0', N)$ independent of $A$ such that for all $A$ sufficiently large we have
    \begin{equation}\label{eqnEdgeToCorner}
      \sup_{x \in \mc E'} \prob^x( X_{\rho_e} \in \del \mc E \cap \mc B_\delta ) \leq P_0
      \quad\text{and}\quad
      P_0 < 1.
    \end{equation}
  \end{lemma}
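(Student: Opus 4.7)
The strategy is to work in action--angle coordinates $(h, \theta)$ and analyze the race between two ways of exiting $\mc E$: through the Hamiltonian boundary $\set{\abs{h} = \delta}$ (landing in $\del\mc B_\delta$) versus through the angular boundary $\set{\abs{\theta - n\pi/2} = \beta_0}$ (landing in $\del\mc E \cap \mc B_\delta$). Because $\mc E$ is separated from every corner by a positive $\theta$-distance, the quantities $\abs{\grad h}$ and $\abs{\grad\theta}$ are bounded above and below by positive constants $c_0, c_1$ (independent of $A$), and $\lap h, \lap \theta$ are uniformly bounded on $\mc E$. By It\^o's formula, using $\grad h \cdot \grad\theta = 0$ and $v\cdot\grad\theta = \pm\abs{\grad h}\abs{\grad\theta}$, one obtains
\begin{equation*}
  dh_t = \lap h(X_t)\,dt + dM^h_t,
  \qquad
  d\theta_t = \paren[\big]{\lap\theta \mp A\abs{\grad h}\abs{\grad\theta}}(X_t)\,dt + dM^\theta_t,
\end{equation*}
where $M^h, M^\theta$ are orthogonal local martingales with $\qv{M^h}_t = \int_0^t 2\abs{\grad h}^2\,ds$ and $\qv{M^\theta}_t = \int_0^t 2\abs{\grad\theta}^2\,ds$.

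Set $\tau_h \defeq \inf\set{t \st \abs{h(X_t)} = \delta}$ and $\tau_\theta \defeq \inf\set{t \st \abs{\theta(X_t) - n\pi/2} = \beta_0 \text{ for some } n}$, so that $\rho_e = \tau_h \varmin \tau_\theta$ and \eqref{eqnEdgeToCorner} reduces to $\inf_{x\in\mc E'}\prob^x(\tau_h < \tau_\theta) \geq 1 - P_0 > 0$. Pick $t_1 = c_2/A$ with $c_2$ so small that the deterministic drift on $\theta$ (of magnitude at most $A c_1^2$) transports $\theta$ by no more than $(\beta_0' - \beta_0)/2$ over $[0, t_1]$. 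Since $x \in \mc E'$ satisfies $\abs{\theta_0 - n\pi/2} \geq \beta_0'$ for every $n$, the event $\set{\tau_\theta \leq t_1}$ forces $\abs{M^\theta}$ to exceed $(\beta_0' - \beta_0)/2 - O(t_1)$ at some point in $[0, t_1]$, and Doob's maximal inequality gives
\begin{equation*}
  \sup_{x\in\mc E'}\prob^x(\tau_\theta \leq t_1)
    \leq \frac{C\, t_1}{(\beta_0' - \beta_0)^2}
    = O(1/A),
\end{equation*}
which is less than any prescribed $\eta > 0$ for $A$ large. In parallel, Dubins--Schwarz writes $M^h_t = \tilde W_{\phi(t)}$ for a Brownian motion $\tilde W$ with $\phi(t) \geq 2c_0^2 t$; the drift $\int_0^t \lap h\,ds$ is at most $O(t_1) = O(\delta^2) \ll \delta$; and $\delta/\sqrt{\phi(t_1)} \leq N/(c_0\sqrt{2c_2})$ is an $A$-independent constant. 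The reflection principle therefore yields $\inf_{x\in\mc E'}\prob^x(\tau_h \leq t_1) \geq p_0 > 0$ for some $p_0 = p_0(N, \beta_0, \beta_0')$.

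Combining these estimates, for every $x \in \mc E'$ and all $A$ sufficiently large,
\begin{equation*}
  \prob^x(\tau_h < \tau_\theta)
    \geq \prob^x(\tau_h \leq t_1) + \prob^x(\tau_\theta > t_1) - 1
    \geq p_0 - \eta > 0,
\end{equation*}
so $P_0 \defeq 1 - (p_0 - \eta)$ works. The main subtlety is that when $\tau_h < \tau_\theta$ the process does leave $\mc B_\delta$, but over the short time $t_1 = O(1/A)$ it remains within a neighborhood of the cell edge on which the gradient and Laplacian bounds used above persist, so a routine stopping argument suffices. The real content of the lemma is the lower bound $\abs{\grad h} \geq c_0 > 0$ on $\mc E$ (the separation of $\mc E$ from the degenerate corners of $h$), which is exactly what ensures that the Brownian comparison for $h$ delivers a positive exit probability through $\del\mc B_\delta$ that is uniform in $A$.
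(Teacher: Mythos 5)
Your argument is a stochastic realization of the same picture the paper uses, but the technique is genuinely different. The paper writes the exit probability as $\zeta_1 + \zeta_2$ (downstream versus upstream angular exit) and bounds each by an explicit super-solution to the elliptic boundary-value problem: an exponential barrier in $A\theta$ for $\zeta_2$, and a heat-equation profile $\xi(\sqrt A\,h,\theta)$ — with $\theta$ playing the role of time and $\sqrt A\,h$ the role of space, via $v\cdot\grad = \abs{\grad h}\abs{\grad\theta}\,\del_\theta$ — for $\zeta_1$, then invokes the maximum principle. You instead run a race of hitting times with It\^o calculus: over a short window $t_1 = c_2/A$, the $O(A)$ advection cannot carry $\theta$ across the gap $\beta_0'-\beta_0$, yet the diffusion in $h$ crosses the $O(1/\sqrt A)$ boundary layer with uniformly positive probability. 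Both proofs rest on precisely the balance that fixed $\delta = N/\sqrt A$; the paper packages it as a parabolic super-solution valid for all $\theta$, you package it as a short-time martingale estimate. Your version is arguably more elementary, the paper's slightly cleaner in that there is no explicit time cutoff to justify.

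There is, however, a gap in the step ``$\prob^x(\tau_h\leq t_1)\geq p_0$,'' and the subtlety you flag at the end misses it. Leaving $\mc B_\delta$ after $\tau_h$ is harmless, since the event $\set{X_{\rho_e}\in\del\mc E\cap\mc B_\delta}$ is already decided at time $\rho_e$. What actually breaks is the clock estimate: the Dubins--Schwarz bound $\phi(t)\geq 2c_0^2 t$ uses $\abs{\grad h}\geq c_0$, which holds on $\mc E$ but degenerates on the corners — i.e.\ exactly on the bad event $\set{\tau_\theta<\tau_h}$ — so an unconditional lower bound on $\prob(\tau_h\leq t_1)$ does not follow directly. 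The fix is to stop at $\rho_e\varmin t_1$: there the clock estimate is valid, and one shows $\prob^x(\rho_e > t_1)\leq 1-p_0'$ (the probability that the time-changed Brownian motion stays inside $(-\delta,\delta)$ up to intrinsic time $2c_0^2 t_1$, which is an $A$-independent constant strictly below $1$). Then $\prob^x(\tau_\theta<\tau_h)\leq\prob^x(\tau_\theta\leq t_1)+\prob^x(\rho_e>t_1)\leq\eta+(1-p_0')$, which gives $P_0<1$ for $A$ large. With this rearrangement the proof is sound.
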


  We first show how Lemmas~\ref{lmaRhoE}--\ref{lmaEdgeToCorner} can be used to prove Lemma~\ref{lmaBoundaryExit}, and prove Lemmas~\ref{lmaRhoE}--\ref{lmaEdgeToCorner} subsequently.

  \begin{proof}[Proof of Lemma~\ref{lmaBoundaryExit}]
    Let $\rho_c$ be the first exit time of $X$ from the fattened corner $\mc C'$, and $\rho_e$ be the first exit time of $X$ from the edge $\mc E$.
    By the strong Markov property for any $x \in \mc C'$ we have
    \begin{multline*}
      \E^x \sigma_1
	= \E^x \Chi{ \rho_c < \sigma_1 } \sigma_1
	  + \E^x \Chi{\rho_c \geq \sigma_1} \sigma_1
	\\
	=
	  \E^x
	    \Chi{ \rho_c < \sigma_1 }
	    \paren[\big]{
	      \rho_c + \E^{X_{\rho_c}} \sigma_1
	    }
	  + \E^x \Chi{\rho_c \geq \sigma_1} \rho_c
	\leq
	  \sup_{y \in \mc E'} \E^y \sigma_1 + \E^x \rho_c
    \end{multline*}
    Similarly, for any $y \in \mc E'$ we have
    \begin{align*}
      \E^y \sigma_1
	&= \E^y \Chi{ \rho_e < \sigma_1 } \sigma_1
	  + \E^y \Chi{\rho_e \geq \sigma_1} \sigma_1
	\\
	&=
	  \E^y
	    \Chi{ \rho_e < \sigma_1 }
	    \paren[\big]{
	      \rho_e + \E^{X_{\rho_e}} \sigma_1
	    }
	  + \E^y \Chi{\rho_e \geq \sigma_1} \rho_e
	\\
	&\leq
	  \prob^y ( \rho_e < \sigma_1 )\;
	  \sup_{z \in \mc C} \E^z \sigma_1
	  + \E^y \rho_e
    \end{align*}

    Using Lemma~\ref{lmaEdgeToCorner} we note that $\prob^y( \rho_e < \sigma_1 ) \leq P_0$ for all $y \in \mc E'$.
    Thus
    \begin{equation*}
      \norm{\E \sigma_1}_{L^\infty(\mc B_\delta)}
	\leq
	  P_0 \norm{\E \sigma_1}_{L^\infty(\mc B_\delta)}
	  + \norm{\E \rho_e}_{L^\infty(\mc E)}
	  + \norm{\E \rho_c}_{L^\infty(\mc C')}.
    \end{equation*}
    Consequently
    \begin{equation}\label{eqnSigma1Bd}
      \norm{\E \sigma_1}_{L^\infty(\mc B_\delta)}
	\leq (1 - P_0)\inv
	  \paren[\big]{ 
	    \norm{\E \rho_e}_{L^\infty(\mc E)}
	    + \norm{\E \rho_c}_{L^\infty(\mc C')}
	  }.
    \end{equation}
    Using Lemma~\ref{lmaRhoE} and~\ref{lmaRhoC} this immediately implies~\eqref{eqnBoundaryExit} as desired.
    This completes the proof of Lemma~\ref{lmaBoundaryExit}, modulo Lemmas~\ref{lmaRhoE}--\ref{lmaEdgeToCorner}.
  \end{proof}

  It remains to prove Lemmas~\ref{lmaRhoE}--\ref{lmaEdgeToCorner}.
  We prove Lemma~\ref{lmaEdgeToCorner} first (subsection~\ref{sxnCornerEntry}), as the result will be re-used in later sections.
  Finally we conclude this subsection with the proof of Lemmas~\ref{lmaRhoE} and~\ref{lmaRhoC} (in subsection~\ref{sxnCornerExit}).

  \subsubsection{The corner entry probability}\label{sxnCornerEntry}

  In this subsection we prove Lemma~\ref{lmaEdgeToCorner}.
  The main idea in the proof is that width of the boundary layer is chosen so that the convection in the $\theta$ direction and the diffusion in the $h$ direction balance each other.
  The diffusion in the $\theta$ direction, however, is an order of magnitude smaller, and can be neglected.
  Consequently, we bound the chance of entering a corner using the solution to a parabolic problem in $h$ and $\theta$.

  \begin{proof}[Proof of Lemma~\ref{lmaEdgeToCorner}]
    To prove the Lemma it suffices to restrict our attention any connected component of $\mc E$.
    Let $\mc E_0$ be one such component.
    Assume, for simplicity, that the angular coordinate in $\mc E_0$ varies between $\beta_0$ and $\pi/2 - \beta_0$.

    Let $\zeta_e(x) = \prob^x( X_{\rho_e} \in \del \mc E_0 \cap \mc B_\delta )$.
    Clearly $\zeta_e = \zeta_1 + \zeta_2$, where
    \begin{equation*}
      \zeta_1(x) = \prob^x\paren[\big]{ X_{\rho_e} \in \set{\theta = \beta_0} }
      \quad\text{and}\quad
      \zeta_2(x) =
	\prob^x\paren[\big]{
	  X_{\rho_e} \in \set[\big]{\theta = \frac{\pi}{2} - \beta_0}
	}.
    \end{equation*}
    We bound $\zeta_1$ and $\zeta_2$ by constructing super-solutions to the associated PDE's.

    Since the bound for $\zeta_2$ is simpler, we address it first.
    Note that $\zeta_2$ is the chance that a particle travels directly against the drift to exit $\mc E_0$.
    This is highly unlikely and we will show that $\zeta_2$ decays to $0$ exponentially with $A$.
    To prove this, note that~$\zeta_2$ satisfies
    \begin{equation}\label{eqnZeta2}
      \begin{beqn}
	-\lap \zeta_2 + A v \cdot \grad \zeta_2 = 0 & in $\mc E_0$,\\
	\zeta_2 = 0
	  & on $\del \mc E_0 - \set[\big]{ \theta = \frac{\pi}{2} - \beta_0 }$
	\\
	\zeta_2 = 1
	  & on $\del \mc E_0 \cap \set[\big]{ \theta = \frac{\pi}{2} - \beta_0 }$.
      \end{beqn}
    \end{equation}
    We construct a super-solution to this equation by choosing
    \begin{equation*}
      \bar \zeta_2 =
	\exp\paren[\big]{
	  \gamma_2 A \paren[\big]{ \theta - \frac{\pi}{2} + \beta_0 }
	}
    \end{equation*}
    where $\gamma_2 > 0$ is a constant that will be chosen later.

    To verify $\bar \zeta_2$ is a super-solution to~\eqref{eqnZeta2} we use the identities
    \begin{equation}\label{eqnLapHTheta}
      \lap =
	\abs{\grad \theta}^2 \del_\theta^2
	+ \abs{\grad h}^2 \del_h^2
	+ \lap \theta \del_\theta
	+ \lap h \del_h
      \quad\text{and}\quad
      v \cdot \grad = \abs{\grad \theta} \abs{\grad h} \del_\theta
    \end{equation}
    to compute
    \begin{equation*}
      -\lap \bar \zeta_2 + A v \cdot \grad \bar \zeta_2
	= -\abs{\grad \theta}^2 \del_\theta^2 \bar\zeta_2
	  + \paren{
	      A \abs{\grad \theta} \abs{\grad h}
	      - \lap \theta
	    } \del_\theta \bar\zeta_2.
    \end{equation*}
    Since
    \begin{equation*}
      \del_\theta \bar \zeta_2 > 0,
      \quad
      \del_\theta^2 \bar \zeta_2 > 0,
      \quad\text{and}\quad
      \inf_{\mc E_0} \; \abs{\grad h} \abs{\grad \theta} > 0,
    \end{equation*}
    there exists a constant $\alpha_0 = \alpha_0( N, \beta_0 )$ such that
    \begin{equation*}
      -\lap \bar \zeta_2 + A v \cdot \grad \bar \zeta_2
	\geq
	  \abs{\grad \theta}^2
	    \paren{
	      -\del_\theta^2 \bar \zeta_2
	      + \frac{A}{\alpha_0} \del_\theta \bar \zeta_2
	    }.
    \end{equation*}
    Choosing $\gamma_2 = 1 / \alpha_0$ makes the right hand side of the above vanish.
    Further, since $\bar \zeta_2 \geq \zeta_2$ on $\del \mc E_0$, the maximum principle implies $\bar \zeta_2 \geq \zeta_2$ on all of $\mc E_0$.
    In particular
    \begin{equation*}
      \sup_{\mc E' \cap \mc E_0} \zeta_2
	\leq \sup_{\mc E' \cap \mc E_0} \bar \zeta_2
	= \exp( -\gamma_2 A (\beta_0' - \beta_0) ),
    \end{equation*}
    which converges to $0$ exponentially with $A$.
    \medskip

    Now we turn to bounding $\zeta_1$.
    We recall that the width of the boundary layer is chosen so that the convection in the $\theta$ direction and the diffusion in the $h$ direction balance each other.
    The diffusion in the $\theta$ direction, however, is an order of magnitude smaller, and can be neglected.
    This is the main idea in our proof, and we bound $\zeta_1$ from above by using the solution to a parabolic problem.

    To construct an upper bound, we first observe that $\zeta_1$ satisfies
    \begin{equation}\label{eqnZeta1}
      \begin{beqn}
	-\lap \zeta_1 + A v \cdot \grad \zeta_1 = 0 & in $\mc E_0$,\\
	\zeta_1 = 0
	  & on $\del \mc E_0 - \set[\big]{ \theta = \beta_0 }$
	\\
	\zeta_1 = 1
	  & on $\del \mc E_0 \cap \set[\big]{ \theta = \beta_0 }$.
      \end{beqn}
    \end{equation}
    We will find a function $\bar \zeta_1$ which is a super-solution to~\eqref{eqnZeta1}, and is of the form
    \begin{equation*}
      \bar\zeta_1(h, \theta) \defeq
	  \xi( \sqrt{A} h, \theta )
          + \alpha_1' (\delta^2 - h^2).
    \end{equation*}
    The function $\xi$ and constant $\alpha_1'$ above will be chosen later.
    
    For convenience, we define the rescaled coordinate $h' = \sqrt{A} h$.
    Using~\eqref{eqnLapHTheta} and the identity $\lap h = -2h$ we compute
    \begin{align}
      \nonumber
      \MoveEqLeft
      -\lap \bar\zeta_1  + A v \cdot \grad \bar\zeta_1
      \\
      \nonumber
      &= -\lap \theta \, \partial_\theta \bar\zeta_1
	+ 2 h \, \partial_h \bar\zeta_1
	- \abs{\grad \theta}^2 \del_\theta^2 \bar\zeta_1
	- \abs{\grad h}^2 \delh^2 \bar\zeta_1
	+ A \abs{\grad h} \abs{\grad \theta} \del_\theta \bar\zeta_1
      \\
      \label{eqnBarZeta11}
      &=
	A \abs{\grad h}^2 \paren[\bigg]{
	  \paren[\Big]{
	    \frac{\abs{\grad \theta}}{\abs{\grad h}}
	      - \frac{\lap \theta}{A \abs{\grad h}^2}
	  } \del_\theta \xi
	  - \del_{h'}^2 \xi
	  }
	+ 2h' \del_{h'} \xi
	- \abs{\grad \theta}^2 \del_\theta^2 \xi
	\\
	\nonumber
	&\qquad
	  + \alpha_1'\paren[\big]{
	    2\abs{\grad h}^2 - 4h^2
	  }
	  .
    \end{align}
    Since $h, \theta \in C^2(\overline{\mc E}_0)$ and $\abs{\grad h} \neq 0$ in $\overline{ \mc E }_0$ we can find a finite constant $\alpha_1$ so that
    \begin{equation*}
      \sup_{\mc E_0}
	\paren[\Big]{
	  \frac{\abs{\grad \theta}}{\abs{\grad h}}
	    - \frac{\lap \theta}{A \abs{\grad h}^2}
	}
	\leq \alpha_1
    \end{equation*}
    for all $A$ sufficiently large.
    Further, we will ensure that the function $\xi$ is chosen so that $\del_\theta \xi < 0$.
    Consequently~\eqref{eqnBarZeta11} reduces to
    \begin{multline}\label{eqnBarZeta12}
      -\lap \bar\zeta_1  + A v \cdot \grad \bar\zeta_1
	\geq A \abs{\grad h}^2 \paren[\big]{
	  \alpha_1 \del_\theta \xi - \del_{h'}^2 \xi
	}
	\\
	+ 2h' \del_{h'} \xi
	- \abs{\grad \theta}^2 \del_\theta^2 \xi
	+ \alpha_1'\paren[\big]{
	  2\abs{\grad h}^2 - 4h^2
	}
	.
    \end{multline}

    Now, we choose $\xi$ to be the solution of the heat equation
    \begin{equation*}
      \begin{beqn}
	\alpha_1 \del_\theta \xi - \del_{h'}^2 \xi= 0
	  & for $h' \in (-2N, 2N)
	    \AND \theta \in (\beta_0, \frac{\pi}{2} - \beta_0)$
	\\
	\xi(h', \theta) = 0 & for $h' = \pm 2N$,
      \end{beqn}
    \end{equation*}
    with smooth concave initial data initial data such that
    \begin{equation*}
      \xi( h', 0 ) = 1 \quad \text{when } \abs{h'} < N,
      \quad\text{and}\quad
      \xi( \pm 2N ) = 0.
    \end{equation*}

    Observe that the boundary conditions for $\xi$ and the concavity of the initial data imply $\del_\theta \xi < 0$, which was used in the derivation of~\eqref{eqnBarZeta12}.
    Now~\eqref{eqnBarZeta12} simplifies to
    \begin{multline*}
      -\lap \bar\zeta_1  + A v \cdot \grad \bar\zeta_1
	\geq
	  2h' \del_{h'} \xi
	  - \abs{\grad \theta}^2 \del_\theta^2 \xi
	  + \alpha_1'\paren[\big]{
	    2\abs{\grad h}^2 - 4\delta^2
	  }
	\\
	\geq
	  -c \sup_{\abs{h'} \leq N}
	    \paren[\big]{ \abs{\del_{h'} \xi} + \abs{\del_\theta^2 \xi} }
	  + \alpha_1'\paren[\big]{
	    2\inf_{\mc E_0} \abs{\grad h}^2 - 4\delta^2
	  }
    \end{multline*}
    for some constant $c$, independent of $A$.
    Since the equation for $\xi$ and the domain are independent of $A$,  all bounds on $\xi$ are also independent of $A$.
    Thus, when $A$ is sufficiently large, we can choose $\alpha_1'$ large enough to ensure that we can ensure that the right hand side of the above is positive.
    Since $\bar \zeta_1 \geq \zeta_1$ on $\del \mc E_0$, we have shown that $\bar \zeta_1$ is a super-solution to~\eqref{eqnZeta1}.
    \medskip

    Finally, given the bounds on $\zeta_1$ and $\zeta_2$, we deduce~\eqref{eqnEdgeToCorner}.
    By symmetry of the flow we see
    \begin{equation}\label{eqnZetaFinal}
      \sup_{x \in \mc E'} \prob^x( X_{\rho_e} \in \del \mc E \cap \mc B_\delta )
      = \sup_{x \in \mc E' \cap \mc E_0} \zeta(x)
      \leq
	  P_1'
	  + \alpha_1 \delta^2
	  + \exp( -\gamma_2 A (\beta_0' - \beta_0) ).
    \end{equation}
    where
    \begin{equation*}
      P_1' = \sup_{x \in \mc E' \cap \mc E_0}\xi(x).
    \end{equation*}
    By the strong maximum principle, we know $P_1' < 1$.
    Also, as $\xi$ is independent of $A$, the constant $P_1'$ must also be so.
    Since the last two terms on the right of~\eqref{eqnZetaFinal} vanish as $A \to \infty$, the proof is complete.
  \end{proof}
  \subsubsection{The exit time from edges and corners}\label{sxnCornerExit}

  It remains to estimate the expected exit time from edges (Lemmas~\ref{lmaRhoE})  and from corners (Lemma~\ref{lmaRhoC}).
  The expected exit time from edges is quick, and we present it first.

  \begin{proof}[Proof of Lemma~\ref{lmaRhoE}.]
    Let $\varphi_e(x) = \E^x \rho_e$.
    We know that $\varphi_e$ satisfies the Poisson equation
    \begin{equation*}
      \begin{beqn}
	-\lap \varphi_e + A v\cdot \grad \varphi_e = 1
	  & in $\mc E$,
	\\
	\varphi_e = 0
	  & on $\del \mc E$.
      \end{beqn}
    \end{equation*}
    We claim $\varphi_e \leq \bar \varphi_e$, where $\bar \varphi_e = \alpha (\delta^2 - h^2)$ for some constant $\alpha$ to be chosen later.

    To see this, we use~\eqref{eqnLapHTheta} and compute
    \begin{equation*}
      -\lap \bar \varphi_e + A v \cdot \grad \bar \varphi_e
	= -\abs{\grad h}^2 \delh^2 \bar \varphi_e
	  - (\lap h) \delh \bar \varphi_e
	\geq \alpha \paren[\big]{ 2 \abs{\grad h}^2  - 4 \delta^2}
    \end{equation*}
    Since $\grad h$ is not degenerate in $\mc E$, we can choose $\alpha$ large enough so that
    \begin{equation*}
      -\lap \bar \varphi_e + A v \cdot \grad \bar \varphi_e \geq 1.
    \end{equation*}
    Clearly $\bar \varphi_e \geq \varphi_e$ on $\del \mc E$, thus the maximum principle implies $\bar \varphi_e \geq \varphi_e$ on all of $\mc E$.
    This immediately gives the desired bound on $\E \rho_e$ completing the proof.
  \end{proof}

  The proof of Lemma~\ref{lmaRhoC} requires a little more work, and we address it next.

  \begin{proof}[Proof of Lemma~\ref{lmaRhoC}]
    We know that $\varphi_c = \E^x \rho_c$ satisfies the PDE
    \begin{equation}\label{eqnVarphiC}
      \begin{beqn}
	-\lap \varphi_c + A v\cdot \grad \varphi_c = 1
	  & in $\mc C'$,
	\\
	\varphi_c = 0
	  & on $\del \mc C'$.
      \end{beqn}
    \end{equation}
    The main idea in this proof is to find a super solution of~\eqref{eqnVarphiC} that depends on only one coordinate.

    Without loss of generality we restrict our attention to $\mc C'_0$, the connected component of $\mc C'$ that contains the origin.
    We will find $\bar\varphi_c =\bar\varphi_c(x_1)$ such that
    \begin{equation}\label{eqnRhoBar}
      \begin{beqn}
        - \bar\varphi_c'' - A \sin(x_1) \cos(x_2) \,  \bar\varphi_c'  \geq 1
	    & in $\mc C_0'$,\\
        \varphi_c \geq  0 & on $\del \mc C_0'$.
      \end{beqn}
    \end{equation}

    To construct $\bar\varphi_c$, let $\bar x_1 = 3/ (2\sqrt{A})$ and define
    \begin{equation*}
      \bar \varphi_c(x) \defeq
	\begin{dcases}
	  g_0(\abs{x_1})  & \abs{x_1} \leq \bar x_1,\\
	  g_1(\abs{x_1})  & \abs{x_1} > \bar x_1.
	\end{dcases}
    \end{equation*}
    Here $g_0$ and $g_1$ are functions that have the following properties:
    For $x \in \mc C_0'$ with $x_1 \geq \bar x_1$ we require that the function $g_1$ satisfies
    \begin{equation}\label{eqnG1}
      -g_1'' - \frac{A x_1 g_1'}{2} \geq 1,
      \quad
      g_1' \leq 0,
      \quad\text{and}\quad
      g_1 \geq 0.
    \end{equation}
    When $0 \leq x_1 \leq \bar x_1$, we require that the function $g_0$ satisfies
    \begin{equation}\label{eqnG0}
      -g_0'' - A x_1 \abs{ g_0' } \geq 1,
      \quad
      g_0'(0)=0,
      \quad
      g_0'(\bar x_1 ) \geq g_1'(\bar x_1 )
      \quad\text{and}\quad
      g_0(\bar x_1) = g_1( \bar x_1 ).
    \end{equation}
    The equations~\eqref{eqnG1} and~\eqref{eqnG0} immediately guarantee that $\bar\varphi_c$ satisfies~\eqref{eqnRhoBar}.

    To estimate $\bar \varphi_c$ we find the functions $g_0$ and $g_1$ explicitly.
    Define
    \begin{equation*}
      \gamma_0 = \sup \{ x_1 \st x \in \mc C_0' \},
      \quad\text{and}\quad
      g_1(x_1) = \frac{18}{A} \ln \paren[\big]{ \frac{\gamma_0}{x_1} }.
    \end{equation*}
    The second and third inequality in~\eqref{eqnG1} are clearly satisfied.
    For the first inequality, we observe that if $\beta_0'$ is small enough we can guarantee $\cos(x_2) \geq 1/2$ in the region $\mc C_0'$.
    Consequently
    \begin{equation*}
      -g_1'' - A \sin(x_1) \cos(x_2) \,  g_1'
      \geq    
	-g_1'' - \frac{A x_1 \,  g_1'}{2}
      = -\frac{18}{A x_1^2} + \frac{18}{2}
      \geq -8+9 =1,
    \end{equation*}
    showing the first inequality in~\eqref{eqnG1}.

    For $g_0$, we define
    \begin{equation*}
      g_0(x_1) = a_0 - \frac{x_1^2}{2},
    \end{equation*}
    where the constant $a_0$ is chosen so that $g_0(\bar x_1) = g_1( \bar x_1 )$.
    Explicitly,
    \begin{equation*}
      a_0
	= g_1(\bar x_1) + \frac{\bar x_1^2}{2}
	= \frac{9}{8 A} + \frac{18}{A}
	  \ln \paren[\big]{ \frac{ 2\gamma_0 \sqrt{A} }{3} }
	\approx \frac{c  \ln A }{A}.
    \end{equation*}
    Now we compute
    \begin{equation*}
      -g_0'' - A \sin(x_1) \cos(x_2) \,  g_0'
      \geq -g_0''
      = 1,
    \end{equation*}
    giving the first inequality in~\eqref{eqnG0}.
    Clearly $g_0' = 0$ and
    \begin{equation*}
      g_0'( \bar x_1 )
	= -2 \bar x_1 
	= \frac{-3}{\sqrt{A}}
	\geq \frac{-12}{\sqrt{A}}
	= \frac{-18}{A \bar x_1}
	= g_1'(\bar x_1).
    \end{equation*}
    This establishes all the inequalities in~\eqref{eqnG0} and shows that $\bar \varphi_c$ indeed satisfies~\eqref{eqnRhoBar}.

    By the maximum principle $\varphi_c \leq \bar \varphi_c$ on all of $\mc C_0'$.
    Consequently
    \begin{equation*}
      \norm{\varphi_c}_{L^\infty(\mc C_0')} \leq 
      \norm{\bar \varphi_c}_{L^\infty(\mc C_0')}
      = a_0 \leq \frac{c \ln A}{A},
    \end{equation*}
    finishing the proof.
  \end{proof}

  \subsection{Tail bounds on the cell exit time.}\label{sxnCellExit}

  We devote this section to proving Lemma~\ref{lmaCellExit} showing that the tail of CDF of the exit time from a cell is bounded below by that of the passage time of Brownian motion.
  This proof is a little more technical than the proof of Lemma~\ref{lmaBoundaryExit}, mainly because $\tau$ is heavy tailed.
  The main idea is to obtain~\eqref{eqnTauLower} indirectly by estimating the Laplace transform.

  \begin{proof}[Proof of Lemma~\ref{lmaCellExit}]
    Let $Q_0$ be a cell containing the point $x$, and define $\psi(x, t) = 1 - \prob^x( \tau_1 \leq t )$.
    We know that $\psi$ satisfies the PDE
    \begin{equation}\label{eqnPsi}
      \begin{beqn}
	\delt \psi + A v \cdot \grad \psi - \lap \psi = 0 & in $Q_0$,\\
	\psi = 1 & on $Q_0 \times \{0\}$,\\
	\psi = 0 & on $\del Q_0 \times [0, \infty)$.
      \end{beqn}
    \end{equation}

    Clearly a bound of the form
    \begin{equation}\label{eqnPsiBound}
      \sup_{x \in \mc B_\delta^+} \psi(x,t)
	\leq    \frac{ c \delta \abs{\ln \delta}}{\sqrt{t} } 
    \end{equation}
    for all $t$ satisfying~\eqref{eqnTRange} is enough to complete the proof of Lemma~\ref{lmaCellExit}.
    Here $\mc B_\delta^+ \defeq \mc B_\delta \cap \set{ h > 0 }$, where we assume for simplicity that $h > 0$ in $Q_0$.
    As usual we assume that $c > 0$ is a finite constant, independent of $A$, that may increase from line to line.

    We prove~\eqref{eqnPsiBound} by bounding the Laplace transform $\varphi$, defined by%
    \begin{equation*}
      \varphi(x,\lambda) = \int_0^\infty e^{-\lambda t} \psi(x, t) \, dt.
    \end{equation*}
    Since $\psi$ is a non-negative decreasing function of time, observe
    \begin{equation*}
      \varphi(x, \lambda)
	\geq \int_0^{1/\lambda} e^{-\lambda t} \psi( x, t) \, dt
	\geq
	  \frac{1}{\lambda}
	  \psi\paren[\big]{ x, \frac{1}{\lambda} }
	  \paren[\big]{ 1 - \frac{1}{e} }.
    \end{equation*}
    Choosing $\lambda = 1/t$ gives
    \begin{equation*}
      t \psi(x,t)\leq  \frac{e}{e - 1} \varphi \paren[\big]{ x, \frac{1}{t} }.
    \end{equation*}
    Thus inequality~\eqref{eqnPsiBound} (and consequently Lemma~\ref{lmaCellExit}) will follow from an inequality of the form
    \begin{equation}\label{eqnMainGoal}
      \varphi(x,\lambda)
	\leq  \frac{c \delta \abs{\ln \delta}}{ \sqrt{\lambda} } 
      \quad
      \text{for } 1 \ll \lambda \ll A,
      \text{ and all }
      x \in \mc B_\delta^+.
    \end{equation}

    Observe $\varphi$ satisfies
    \begin{equation}\label{eqnPhi}
      \begin{beqn}
	- \lap \varphi  + A v \cdot \grad \varphi+ \lambda \varphi = 1 & in $Q_0$,\\
	\varphi = 0 & on $\del Q_0 $.
      \end{beqn}
    \end{equation}
    We aim to obtain an upper bound for $\varphi$ by constructing an appropriate super solution.
    For this construction, we will need two auxiliary functions: $\varphi_e$, and $\varphi_c$, which we define below.
    Roughly speaking, $\varphi_e$ will provide a good estimate near cell edges, and $\varphi_c$ will handle the corners.
    
    First we define $\varphi_e$ by
    \begin{equation}\label{eqnPhiE}
      \varphi_e
	= \alpha h(x) \left( \frac{2}{ \sqrt{\alpha\lambda}} - h(x) \right),
    \quad
    \end{equation}
    where $\alpha$ is a fixed constant that will be chosen later.

    To define $\varphi_c$, we need to ``fatten'' the boundary layer a little.
    Namely define $\eps = 1 / \sqrt{\alpha \lambda}$ and let $\varphi_c$ be the solution of
    \begin{equation}\label{eqnPhi2}
      \begin{beqn}
	- \lap \varphi_c  + A v \cdot \grad \varphi_c+ \lambda \varphi_c = \Chi*{\mc C_\eps^+} & in $\mc B_{\eps}^+$,\\
	\varphi_c = 0 & on $\del \mc B_{\eps}^+ $.
      \end{beqn}
    \end{equation}
    Here $\mc C_\eps^+ \defeq \mc C_\eps \cap \set {h > 0}$, where $\mc C_\eps$ the neighbourhood of the corners defined in~\eqref{eqnCornerEdgeDef}.
    For clarity of presentation in this section, we subscript our edge and corner regions with $\eps$ to indicate that their thickness is $\eps = O(1 / \sqrt{\lambda})$ and not $\delta = O(1 / \sqrt{A})$ as we have in other sections of this paper.

    We claim
    \begin{equation}\label{eqnVarphiSupSol}
      \varphi \leq \varphi_e + \varphi_c \quad\text{in } \mc B_\eps^+.
    \end{equation}
    To see this, observe first that the maximum principle guarantees $\varphi \leq 1/\lambda$.
    Thus on $\{ h = \eps \}$, we have $\varphi_e=1/\lambda$.
    Since both $\varphi_e$ and $\varphi_c$ are non-negative, we must have $\varphi \leq \varphi_e + \varphi_c$ on $\del \mc B_\eps^+$.

    On the interior of $\mc B_\eps^+$, we use~\eqref{eqnLapHTheta} and the identity $-\lap h = 2h$ to obtain
    \begin{multline*}
      - \lap \varphi_e + A v \cdot \grad \varphi_e+ \lambda \varphi_e
	= - \del_h \varphi_e \lap h - \del_h^2 \varphi_e \abs{\grad h}^2 + \lambda \varphi_e\\
	= 2 \alpha \abs{\grad h}^2 + 4 \alpha h (\eps - h) + \lambda \alpha h ( 2\eps - h )
	\geq 2 \alpha \abs{\grad h}^2
    \end{multline*}
    Let $\mc E_\eps^+ = \mc E_\eps \cap \set{h > 0}$, where $\mc E_\eps$ is the neighbourhood of the edges defined in~\eqref{eqnCornerEdgeDef}.
    Since $\grad h$ is non-degenerate in $\mc E_\eps^+$, we can choose $\alpha$ large enough so that $2 \alpha \abs{\grad h}^2 \geq 1$ on $\mc E_\eps^+$.

    Consequently, with this choice of $\alpha$,
    \begin{equation*}
      (-\lap + A v \cdot \grad + \lambda) ( \varphi_e + \varphi_c ) \geq 1 \quad\text{in } \mc B_\eps^+,
      \quad\text{and}\quad
      \varphi_e + \varphi_c \geq \frac{1}{\lambda} \geq \varphi \quad\text{on } \del \mc B_\eps^+.
    \end{equation*}
    Now~\eqref{eqnVarphiSupSol} follows immediately from the maximum principle.

    Once~\eqref{eqnVarphiSupSol} is established, we only need to control $\varphi_c$ appropriately in order to prove~\eqref{eqnMainGoal}.
    Since this is the heart of the matter and involves a delicate analysis of the behaviour near the degenerate corners, and we single it out as a lemma and momentarily postpone its proof.
    \begin{lemma}\label{lmaEtaMainBound}
      With $\varphi_c$ as above, we have
      \begin{equation}\label{eqnEtaMainBound}
	\varphi_c(x,\lambda)
	  \leq c \abs{\ln \delta} \paren[\Big]{
	    \delta^2 + \frac{ h(x) }{\sqrt{\lambda}}
	  }
	\quad\text{for all } x \in \mc B_\eps^+.
      \end{equation}
    \end{lemma}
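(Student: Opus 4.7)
The plan is to prove~\eqref{eqnEtaMainBound} by constructing a super-solution $\bar\varphi_c$ to~\eqref{eqnPhi2} satisfying the bound $\bar\varphi_c(x) \leq c|\ln\delta|(\delta^2 + h(x)/\sqrt\lambda)$, and then applying the maximum principle. The super-solution takes the form $\bar\varphi_c = \phi_1 + \phi_2$, where $\phi_1$ is a simple globally defined function handling the bulk of the domain and $\phi_2$ is a localized correction near the degenerate cell corners. Writing $L \defeq -\lap + A v \cdot \grad + \lambda$, the target inequality is $L\bar\varphi_c \geq \Chi*{\mc C_\eps^+}$ in $\mc B_\eps^+$, with $\bar\varphi_c \geq 0$ on $\del \mc B_\eps^+$.

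For $\phi_1$, I take the explicit choice $\phi_1(x) \defeq c_1 |\ln\delta|(\delta^2 + h(x)/\sqrt\lambda)$. Since $v \cdot \grad h = 0$ and $-\lap h = 2h$, a direct calculation gives
\[
L \phi_1 = c_1 |\ln\delta| \paren[\big]{ \lambda \delta^2 + h\sqrt\lambda + 2h/\sqrt\lambda } \geq c_1 |\ln\delta| \, h \sqrt\lambda
\]
for $\lambda \geq 1$. In the edge region $\mc E_\eps^+$ the source $\Chi*{\mc C_\eps^+}$ vanishes and $\phi_1$ alone is a super-solution. In the ``outer corner'' portion $\mc C_\eps^+ \cap \set{h(x) \geq 1/(c_1|\ln\delta|\sqrt\lambda)}$ the same estimate yields $L\phi_1 \geq 1$, matching the source. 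The only place $\phi_1$ fails is the ``inner corner'' $\mc C_\eps^+ \cap \set{h(x) < 1/(c_1|\ln\delta|\sqrt\lambda)}$.

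The correction $\phi_2$ is designed to handle the inner corner. I adapt the super-solution from the proof of Lemma~\ref{lmaRhoC} to the fattened corner $\mc C'_\eps \cap \set{h > 0}$. The key observation is that the super-solution used there is a function of $|x_1|$ alone, piecing together $g_0(|x_1|) = a_0 - x_1^2/2$ for $|x_1| \leq \bar x_1 = 3/(2\sqrt A)$ and $g_1(|x_1|) = (18/A)\ln(\gamma_0/|x_1|)$ for $|x_1| > \bar x_1$, with $a_0 = O(\ln A/A) = O(\delta^2 |\ln\delta|)$. Its construction uses only the $x_1$-extent $\gamma_0$ of the corner and the estimate $\cos(x_2) \geq 1/2$ inside $\mc C'$; in particular, it is independent of the $h$-thickness. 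Consequently the same construction yields $\phi_2$ supported in $\mc C'_\eps \cap \set{h > 0}$, with $\norm{\phi_2}_{L^\infty} \leq c_2 \delta^2 |\ln\delta|$ and $(-\lap + Av\cdot\grad)\phi_2 \geq 1$ throughout this set. Since $\phi_2 \geq 0$, this gives $L\phi_2 \geq 1$ there, which is exactly the estimate needed in the inner corner.

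The main obstacle is gluing $\phi_2$ to $\phi_1$ to obtain a globally valid super-solution. Extending $\phi_2$ by zero outside $\mc C'_\eps$ leaves a distributional negative-Laplacian contribution on the corner boundary where $\phi_2$ drops from positive to zero, which threatens the super-solution property in the edge region where $\phi_1$ alone must carry the load. Two natural fixes suggest themselves: mollifying $\phi_2$ and absorbing the error into the slack of the constant $c_1$ in $\phi_1$; or, probably cleaner, mimicking the probabilistic decomposition from the proof of Lemma~\ref{lmaBoundaryExit}. In the latter approach one represents $\varphi_c$ via Feynman--Kac as an expected Laplace-weighted corner time, decomposes into excursions between the fattened corner and the edge region, uses Lemma~\ref{lmaRhoC} to bound the per-visit corner cost by $c\delta^2|\ln\delta|$, and uses an $\eps$-thickness adaptation of Lemma~\ref{lmaEdgeToCorner} to bound the geometric series of re-entry probabilities. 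Executing this carefully --- in particular tracking how the exponential weight $e^{-\lambda s}$ interacts with the slow $|\ln\delta|$ logarithmic delay at each corner visit produced by the degenerate saddle geometry of $h$ --- is the delicate step that produces the final bound.
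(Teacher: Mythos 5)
You have correctly isolated the obstruction (the gluing of $\phi_2$ to $\phi_1$), but neither of the fixes you float actually works, and the paper explicitly warns against one of them. Mollification is defeated by the $O(A)$ drift: the cutoff must bring $\phi_2 \sim \delta^2\abs{\ln\delta}$ down to zero across $\set{\abs{\theta - n\pi/2} = \beta_0'}$, where $\abs{\grad h}\abs{\grad\theta}\sim 1$, so smoothing over a $\theta$-scale $\ell$ creates a drift contribution of size $A\delta^2\abs{\ln\delta}/\ell \sim \abs{\ln\delta}/\ell$, whereas the slack in $L\phi_1$ is only $O(\abs{\ln\delta}\lambda\delta^2)=O(\abs{\ln\delta}\lambda/A)$, forcing $\ell \gtrsim A/\lambda \gg 1$. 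A drift-compatible transition would have to decay at rate $\sim A$ in $\theta$, which is no longer a small correction that $\phi_1$ can absorb. As for the probabilistic geometric-series route, the text immediately preceding the proof of Lemma~\ref{lmaEtaMainBound} states that exactly this ``will not work directly'': with boundary-layer thickness $\eps = 1/\sqrt{\alpha\lambda}\gg\delta$, the re-entry probability $P_0$ of Lemma~\ref{lmaEdgeToCorner} degenerates to $1$, and a uniform-in-$h$ return bound cannot produce the $h(x)/\sqrt\lambda$ dependence the target estimate requires; the authors note that rescuing it needs unfolding to the universal cover, which they deliberately avoid.

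The paper sidesteps gluing altogether. For $h\geq\delta^2$ it uses the single ansatz $\bar\varphi_c(h,\theta)=\alpha_1\varphi_e(h)+g(\theta)$, with $g$ a periodic function of $\theta$ alone chosen strongly concave near corners so that $\abs{\grad\theta}^2(-g''+Abg')$ supplies the $O(1)$ source there while $\alpha_1\varphi_e$ carries the edges. The factor $\abs{\ln\delta}$ emerges as the constant $\alpha_3\approx\abs{\ln\delta}$ forced by the periodic solvability condition for $g$, and $G=g'$ is constructed piecewise with downward-only jumps (so the distributional $-g''$ remains nonnegative), decaying exponentially at rate $\sim A$ in $\theta$ between corners --- precisely the drift-compatible transition your $\phi_2$-cutoff cannot supply.
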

    Observe that~\eqref{eqnEtaMainBound} immediately implies
    \begin{equation*}
      \varphi(x)
	\leq \varphi_e(x,\lambda) + \varphi_c(x, \lambda)
	\leq
	  c \paren[\Big]{
	      \frac{h(x)}{\sqrt{\lambda}}
	      + \abs{\ln \delta} \paren[\Big]{
		  \delta^2 + \frac{ h(x)}{\sqrt{\lambda}}
		}
	    }
    \end{equation*}
    which yields~\eqref{eqnMainGoal} when $1 \ll \lambda \ll A$ and $\delta$ is sufficiently small.
    This concludes the proof of Lemma~\ref{lmaTau1BoundLower}, modulo the proof of Lemma~\ref{lmaEtaMainBound}.
  \end{proof}

  \subsubsection*{The corner estimate.}

  To finish the proof of Lemma~\ref{lmaCellExit} we need to prove Lemma~\ref{lmaEtaMainBound}.
  Unfortunately, in this situation we can not use a geometric series argument based on Lemma~\ref{lmaEdgeToCorner}, because we need to work with boundary layers of thickness much larger than $O(\delta)$.
  In this case the constant $P_0$ in Lemma~\ref{lmaEdgeToCorner} degenerates to $1$ and our geometric series argument will not work directly.

  In order to make such an argument work, one needs to  is to unfold~$\varphi_c$ to the universal cover.
  This effectively replaces the geometric series argument based on Lemma~\ref{lmaEdgeToCorner} with a finer version that makes better use of the distance of the initial position to the separatrix.
  The proof, however, is a lot more involved and we chose not to follow this approach here.
  We instead estimate $\varphi_c$ by explicitly constructing a super-solution which is very concave in the $\theta$ direction in cell corners as described below.

  \begin{proof}[Proof of Lemma~\ref{lmaEtaMainBound}] 
    Let $h_0 = \delta^2$.
    Observe that the function
    \begin{equation*}
      \psi = 2 h \abs{ \ln h }
    \end{equation*}
    is a super solution to equation~\eqref{eqnPhi2}.
    Thus~\eqref{eqnEtaMainBound} certainly holds for $h \leq h_0$.

    In the region $h \geq h_0$, we use the ansatz 
    \begin{equation*}
      \bar \varphi_c(h, \theta) = \alpha_1 \varphi_e(h) + g(\theta),
    \end{equation*}
    where $ \varphi_e(h)$ solves~\eqref{eqnPhiE}, and $\alpha_1$ is a constant, and $g$ is a periodic function that only depends on $\theta$.
    We will see later that $\alpha_1 \approx \abs{\ln \delta}$ will be chosen later.

    To find an equation for $g$ we compute
    \begin{multline}\label{eqnVarphiC1}
      -\lap \bar \varphi_c + A v \cdot \grad \bar \varphi_c
	  + \lambda \bar \varphi_c
	\geq
	  \alpha_1 \Chi*{\mc E_\eps^+}
	  -\abs{\grad \theta}^2 g''
	  + \paren[\big]{ A \abs{\grad \theta} \abs{\grad h} - \lap \theta} g'
	\\
	=
	  \alpha_1 \Chi*{\mc E_\eps^+}
	  + \abs{\grad \theta}^2 \paren[\big]{
	    -g'' +
	      \paren[\Big]{ \frac{ A \abs{\grad h} }{\abs{\grad \theta}}
		-\frac{ \lap \theta }{\abs{\grad \theta}^2} } g'
	    }.
    \end{multline}
    Here we used the fact that $g \geq 0$, which we will be guaranteed by our construction of $g$.
    A direct calculation shows that there exist constants $b_0$, $b_1$ such that
    \begin{equation}\label{eqnB0B1}
      0 < A b_0
	\leq
	  \paren[\Big]{ \frac{ A \abs{\grad h} }{\abs{\grad \theta}}
	    -\frac{ \lap \theta }{\abs{\grad \theta}^2} }
	\leq
	  A b_1 < \infty
    \end{equation}
    holds on $\mc B_\eps^+$.

    The quickest way to verify~\eqref{eqnB0B1} is to observe that in a neighbourhood of a corner the coordinates $(h, \theta)$ are asymptotically
    \begin{equation*}
      h \approx x y,
      \quad\text{and}\quad
      \theta \approx x^2 - y^2
    \end{equation*}
    up to constants.
    Consequently,
    \begin{equation}\label{eqnGradTheta}
      \abs{\grad \theta}^2 \approx \sqrt{h^2 + \theta^2} 
      \quad\text{and}\quad
      \lap \theta \leq O(\abs{\grad \theta}^2).
    \end{equation}
    Since $\grad \theta$ is non-degenerate away from corners, the existence of $b_0$ and $b_1$ satisfying~\eqref{eqnB0B1} follows.%
    \footnote{
      An alternate explicit (but cumbersome) proof of~\eqref{eqnB0B1} can be obtained by a direct computation using the explicit choice of the angular coordinate $\theta = \cos x \sec y$ near cell corners.
      In this case we see
      \begin{equation*}
	  \frac{\lap \theta}{\abs{\grad \theta}^2} =
	    \frac{2 \sin^2 y \cos x \cos y}
	      {\sin^2 x + \sin^2 y - 2 \sin^2 x \sin^2 y}.
      \end{equation*}
      from which~\eqref{eqnB0B1} quickly follows.
    }

    Returning to~\eqref{eqnVarphiC1} we see
    \begin{equation}\label{eqnVarphiC2}
      -\lap \bar \varphi_c + A v \cdot \grad \bar \varphi_c
	  + \lambda \bar \varphi_c
	\geq
	  \alpha_1 \Chi*{\mc E_\eps^+}
	  +\abs{\grad \theta}^2 \paren[\big]{ - g'' + A b g' },
    \end{equation}
    where
    \begin{equation}\label{eqnB}
      b = b(\theta) = b_0\Chi{g' \geq 0} + b_1 \Chi{g' < 0}.
    \end{equation}
    This leads to an equation for $g$.
    Explicitly, we choose $g$ to be a periodic function so that 
    \begin{gather}
      \label{eqnG}
	- g'' + A b g' \geq f,
    \end{gather}
    normalized so that $\min g = 0$.
    
    Here the function $f$ is defined by
    \begin{equation*}
      f(\theta)
	\defeq
	\begin{dcases}
	  \frac{\alpha_2}{\sqrt{h_0^2 + \theta^2}} & \text{when } \abs{\theta} \leq \beta_0,\\
	  -\alpha_3 & \text{when } \beta_0 < \theta < \frac{\pi}{4} - \beta_0,
	\end{dcases}
    \end{equation*}
    in the interval $[-\beta_0, \pi/4 - \beta_0]$, and is extended periodically outside.

    The parameters $\alpha_2$ and $\alpha_3$ are chosen as follows.
    We require $\alpha_2$ to be large enough so that
    \begin{equation*}
      \abs{\grad \theta}^2 f
	= \frac{\alpha_2 \abs{\grad\theta}^2}{\sqrt{h_0^2 + \theta^2}}
	\geq 1
      \quad\text{in } \bar{\mc C}_\eps^+ \cap \{h \geq h_0\}.
    \end{equation*}
    Equation~\eqref{eqnGradTheta} guarantees that such a choice of $\alpha_2$ is possible.
    The role of the parameter $\alpha_3$ is to guarantee that the equation~\eqref{eqnG} admits a periodic super-solution.
    We show below that $\alpha_3 \approx \ln h_0$.
    
    Now equation~\eqref{eqnVarphiC2} shows that when $h \geq h_0$ we have
    \begin{equation*}
      -\lap \bar \varphi_c + A v \cdot \grad \bar \varphi_c
	  + \lambda \bar \varphi_c
	\geq
	  \alpha_1 \Chi*{\mc E_\eps}
	  +\abs{\grad \theta}^2 f
	\geq 
	  (\alpha_1 - \alpha_3) \Chi*{\mc E_\eps^+}
	  + \Chi*{\bar{\mc C}_\eps^+}
	\geq 1
    \end{equation*}
    provided $\alpha_1$ is chosen so that
    \begin{equation*}
      \alpha_1 \geq 1 + \alpha_3 \approx \abs{\ln \delta}.
    \end{equation*}
    Thus the maximum now principle guarantees
    \begin{equation*}
      \varphi_c \leq \psi(h_0) + \bar \varphi_c
      \quad \text{on all of } \mc B_\eps^+.
    \end{equation*}

    To finish the proof, we only need to estimate $\bar \varphi_c$.
    Tracing through the above we see 
    \begin{equation*}
      \varphi_c(x)
	\leq \psi(h_0) + \alpha_1  \varphi_e(x) +\norm{g}_{L^\infty}
	\leq c \abs{\ln \delta} \paren[\Big]{ \delta^2 + \frac{ h(x) }{\sqrt{\lambda}} } + \norm{g}_{L^\infty}. 
    \end{equation*}
    We claim
    \begin{equation}\label{eqnGBound}
      \sup_\theta g \leq c \delta^2 \abs{\ln \delta}
    \end{equation}
    from which~\eqref{eqnEtaMainBound} immediately follows.

    We remark that an elementary argument using Duhamel's formula quickly shows that
    \begin{equation*}
      \norm{g}_{L^\infty} \leq \frac{c \norm{f}_{L^1}}{A}.
    \end{equation*}
    If we apriori knew $\alpha_3 \approx \abs{\ln \delta}$, this would imply $\norm{f}_{L^1} \leq c \abs{\ln \delta}$ which immediately gives~\eqref{eqnGBound}.
    If, for instance, $b$ was constant (and not a nonlinear function of $g'$), then the solvability condition for~\eqref{eqnG} is precisely
    \begin{equation*}
      \int_{0}^{2\pi} f = 0,
    \end{equation*}
    from which we obtain
    \begin{equation*}
      \alpha_3 = \int_{-\beta_0}^{\beta_0} \frac{\alpha_2}{\sqrt{h_0^2 + \theta^2}} \, d\theta \leq c \abs{\ln \delta}.
    \end{equation*}
    Unfortunately the nonlinear dependence of $b$ on $g'$ doesn't allow this simple argument to work.
    We are instead forced to use a somewhat technical construction of a super-solution and explicitly show $\alpha_3 \approx \ln A$.
    We devote the rest of this proof to this construction and proving~\eqref{eqnGBound}.

    To find a super-solution to~\eqref{eqnG} we treat it as a first order ODE for $g'$.
    We will construct a piecewise $C^\infty$ function $G$ which is $2\pi$ periodic such that
    \begin{gather}\label{eqnGG}
      G' \leq A b G - f.
    \end{gather}
    Once $G$ has been constructed, define
    \begin{equation}\label{eqnGDef}
      g(\theta) = \int_{\theta_0}^\theta G(t) \, dt.
    \end{equation}
    where $\theta_0$ is chosen to ensure our normalization condition $\min g = 0$.
    In order for $g$ to be $2\pi$ periodic we need to ensure
    \begin{equation}\label{eqnGSolvability}
      \int_0^{2\pi} G = 0.
    \end{equation}
    Further, in order for $g$ to be a super-solution to~\eqref{eqnG} we need to ensure that all discontinuities of $G$ only have downward jumps.

    The main idea behind constructing $G$ is to solve the ODE~\eqref{eqnGG} \emph{backwards} with final condition $G(\frac{\pi}{2} + \beta_0) = 0$.
    Using~\eqref{eqnGG} one can apriori compute regions where $G > 0$, and so determine $b$ explicitly.
    Once this is known, $G$ can be found explicitly.

    To flesh out the details of this approach, let $\theta_1 = \frac{\pi}{2} + \beta_0$ and define
    \begin{equation}\label{eqnGG1}
      G(\theta)
	= G_1(\theta)
	\defeq
	  \int_{\theta}^{\theta_1} f(t) e^{-A b_0 (\theta_1 - t)} \, dt
      \quad\text{for }
	\frac{\pi}{2} - \beta_0
	\leq
	\theta \leq \theta_1 \defeq \frac{\pi}{2} + \beta_0.
    \end{equation}
    Since $f \geq 0$ in corners, the above definition forces $G_1 \geq 0$.
    Further, when $G \geq 0$, we know $b = b_0$.
    Consequently $G$ satisfies~\eqref{eqnGG} for $\theta \in (\frac{\pi}{2} - \beta_0, \frac{\pi}{2} + \beta_0)$.

    Note that~\eqref{eqnGG1} also implies $A b_0 G \to f$ uniformly on $(\frac{\pi}{2} - \beta_0, \frac{\pi}{2} + \beta_0)$ as $A \to \infty$.
    Thus
    \begin{equation*}
      0 < G_1(\frac{\pi}{2} - \beta_0) \leq c
      \quad\text{and}\quad
      \int_{\frac{\pi}{2} - \beta_0}^{\frac{\pi}{2} + \beta_0} G_1
      \leq \frac{c}{A} 
	\int_{\frac{\pi}{2} - \beta_0}^{\frac{\pi}{2} + \beta_0} f
      \leq \frac{c \ln A}{A}.
    \end{equation*}

    Now we extend $G$ to the left of $\frac{\pi}{2} - \beta_0$.
    Define
    \begin{equation*}
      G_2(\theta)
	= \frac{-\alpha_3}{A b_0}
	  + \paren[\Big]{
	      G_1\paren[\big]{ \frac{\pi}{2} - \beta_0}
	      + \frac{\alpha_3}{A b_0}
	    }
	    e^{-A (\frac{\pi}{2} - \beta_0 - \theta)},
      \quad\text{and}\quad
      \theta_2 = G_2\inv(0).
    \end{equation*}
    An explicit calculation shows that $\theta_2$ is uniquely defined and
    \begin{equation*}
      \theta_2 \geq \frac{\pi}{2} - \beta_0 - \frac{c \abs{\ln (\alpha_3 / A)}}{A}.
    \end{equation*}
    With this we define
    \begin{equation*}
      G(\theta) = G_2( \theta) \quad \text{for } \theta_2 \leq \theta \leq \frac{\pi}{2} - \beta_0.
    \end{equation*}
    Since $G \geq 0$ on this interval $b = b_0$ and so $G$ satisfies~\eqref{eqnGG} on $(\theta_2, \frac{\pi}{2} - \beta_0)$.

    Finally, we define
    \begin{equation*}
      G(\theta)
	= G_3(\theta) \defeq \frac{-\alpha_3}{A b_1}
	  \paren[\Big]{ 1 -  e^{-A (\theta_2 - \theta)} }
	\quad\text{for } \beta_0 \leq \theta \leq \theta_2,
    \end{equation*}
    and extend $G$ periodically outside the interval $(\beta_0, \frac{\pi}{2} + \beta_0)$.
    Observe $G_3 \leq 0$ on the interval $(\beta_0, \theta_2)$ and hence $b = b_1$ and $G$ satisfies~\eqref{eqnGG} on this interval as well.

    By construction note that $G$ is continuous (and piecewise smooth) on $[\beta_0, \frac{\pi}{2} + \beta_0]$.
    Further, since $G_3(\beta_0) < 0$ and $G_1(\frac{\pi}{2} + \beta_0) = 0$ the discontinuity introduced by extending $G$ periodically will only have a downward jump.
    Moreover as 
    \begin{equation*}
      \int_{\beta_0}^{\theta_2} G_3 \approx \frac{-\alpha_3}{A},
      \quad
      \int_{\theta_2}^{\frac{\pi}{2} - \beta_0} G_2 \approx  \frac{\abs{\ln (\alpha_3 / A)}}{A},
      \quad\text{and}\quad
      \int_{\frac{\pi}{2} - \beta_0}^{\frac{\pi}{2} + \beta_0} G_1
	\approx \frac{\ln A}{A},
    \end{equation*}
    there exists $\alpha_3 = O(\ln A)$ so that
    \begin{equation*}
      \int_{\beta_0}^{\frac{\pi}{2} + \beta_0} G
	=
	  \int_{\beta_0}^{\theta_2} G_3
	  +
	  \int_{\theta_2}^{\frac{\pi}{2} - \beta_0} G_2
	  +
	  \int_{\frac{\pi}{2} - \beta_0}^{\frac{\pi}{2} + \beta_0} G_1
	= 0.
    \end{equation*}
    By periodicity, this will imply~\eqref{eqnGSolvability}, completing the construction of $G$.

    It remains to prove~\eqref{eqnGBound}.
    For this note that our explicit construction of $G$ also shows
    \begin{equation*}
      \int_0^{2\pi} \abs{G} \leq \frac{c \ln A}{A}.
    \end{equation*}
    Since $g$ is defined by~\eqref{eqnGDef}, the desired bound~\eqref{eqnGBound} is immediate.
    This concludes the proof.
  \end{proof}

  \section{The \texorpdfstring{$n^\text{th}$}{nth} return to the separatrix.}\label{sxnNthReturn}

  The aim of this section we prove Lemma~\ref{lmaTauINBound}.
  The main step in the proof is Lemma~\ref{lmaTau1BoundLower} and was proved in Section~\ref{sxnFirstReturnMain}.
  To prove Lemma~\ref{lmaTauINBound} we first prove (Section~\ref{sxnCoordinateReturn}, Lemma~\ref{lmaTauI1Bound}) that Lemma~\ref{lmaTau1BoundLower} also holds for the stopping time $\tau_1^i$.
  Finally we use the strong Markov property and estimate $\tau_n^i$ in terms of $\tau_1^i$ (Section~\ref{sxnHigherReturn}).

  \subsection{The first return of the coordinate processes}\label{sxnCoordinateReturn}

  In this section we prove that Lemma~\ref{lmaTauINBound} holds for $n = 1$.
  For clarity, we state this result as a separate lemma below.

  \begin{lemma}\label{lmaTauI1Bound}
    There exists a positive constant $c_0$ such that if $i \in \{1, 2 \}$, $x \in \mc B_\delta$ and~\eqref{eqnTRange} holds, then
    \begin{gather}
      \label{eqnTau1iLowerExplicit}
      \inf_{ \abs{h(x)} < \delta } \prob^x( \tau^i_1 \leq t ) \geq
	\paren[\Big]{1-  \frac{c_0 \delta \abs{\ln \delta} }{\sqrt{t}} }^+,
      \\
      \llap{\text{and}\qquad}
      \label{eqnTau1iUpperExplicit}
	\sup_{ \abs{h(x)} < \delta } \prob^x( \tau^i_1 \leq t ) \leq
	  1-\erf  \paren[\Big]{ \frac{\delta }{c_0 \sqrt{t}} }.
    \end{gather}
  \end{lemma}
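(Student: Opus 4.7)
The plan is to adapt the two-step structure of the proof of Lemma~\ref{lmaTau1BoundLower} to the coordinate-specific stopping time $\tau_1^i$, combining that lemma with a uniform estimate on the probability that the process lands on a ``vertical'' vs ``horizontal'' cell edge at the first separatrix return.

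For the lower bound \eqref{eqnTau1iLowerExplicit}, the first step is a cell-problem estimate: there exists a constant $p_0$, independent of $\delta$ for $\delta$ small and $A$ large, satisfying $p_0 > 1 - 1/\sqrt{2}$ and
\[
  \inf_{x \in \overline{\mc B}_\delta} \prob^x\paren[\big]{ X_i(\tau_1) \in \pi\Z } \geq p_0.
\]
This follows from the classical homogenization / cell-problem analysis (\cites{FreidlinWentzell12,FannjiangPapanicolaou94,NovikovPapanicolaouEtAl05}): after the process exits the boundary layer, it enters a cell interior and exits through each of the four edges of that cell with probability approaching $1/4$ as $A \to \infty$, so the probability of exiting through one of the two ``$X_i \in \pi\Z$'' edges approaches $1/2$. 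The second step sets up an iteration. Define $f(t) \defeq \sup_{x \in \mc B_\delta} \prob^x(\tau_1^i > t)$. Splitting on whether $\tau_1 > t/2$ and observing that $\tau_1^i = \tau_1$ unless $X_i(\tau_1) \not\in \pi\Z$, the strong Markov property at $\tau_1$ gives
\[
  f(t) \leq \sup_{x \in \mc B_\delta}\prob^x\paren[\big]{ \tau_1 > t/2 } + (1 - p_0)\, f(t/2).
\]
Lemma~\ref{lmaTau1BoundLower} bounds the first term by $c\delta|\ln\delta|/\sqrt{t}$. Since $(1-p_0)\sqrt{2} < 1$, iterating the recursion while $t/2^k$ remains in the regime~\eqref{eqnTRange} and using the trivial bound $f \leq 1$ at the base case yields $f(t) \leq C\delta|\ln\delta|/\sqrt{t}$, which is precisely \eqref{eqnTau1iLowerExplicit}.

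For the upper bound \eqref{eqnTau1iUpperExplicit}, since $\tau_1^i \geq \tau_1$, it suffices to prove the complementary lower bound $\prob^x(\tau_1 > t) \geq \erf(\delta/(c\sqrt{t}))$. In the regime~\eqref{eqnTRange} this amounts to $\prob^x(\tau_1 > t) \gtrsim \delta/\sqrt{t}$ uniformly in $x \in \mc B_\delta$. This heavy-tailed behaviour reflects corner trapping: with probability $\gtrsim \delta/\sqrt{t}$, the process either fails to exit the boundary layer, or exits and becomes trapped in a neighbourhood of a degenerate saddle of $h$ for time at least $t$. Rigorously we obtain the bound by constructing a suitable sub-solution to the parabolic problem $\delt\psi + A v \cdot \grad\psi - \lap\psi = 0$ satisfied by $\prob^x(\tau \leq t)$, in a manner dual to the super-solution construction in the proof of Lemma~\ref{lmaCellExit}.

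The main obstacle is establishing the cell-problem estimate $p_0 > 1 - 1/\sqrt{2}$ uniformly in the starting point. The $1/\sqrt{2}$ threshold is forced by the $\sqrt{t/2}$ factor in the recursion; while the heuristic value $p_0 \approx 1/2$ leaves some room, verifying this bound uniformly for $x \in \overline{\mc B}_\delta$, including starting points near cell corners where $h$ degenerates, requires an analysis paralleling Lemma~\ref{lmaEdgeToCorner}.
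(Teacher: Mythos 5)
Your lower-bound argument is a genuinely different route from the paper's, and the recursion $f(t)\leq\sup_{x\in\mc B_\delta}\prob^x(\tau_1>t/2)+(1-p_0)f(t/2)$ is set up correctly (condition at $\tau_1$, note that $X_{\tau_1}$ lies on the separatrix and hence in $\mc B_\delta$, and use $t-\tau_1\geq t/2$ on the event $\tau_1\leq t/2$). However, as you yourself flag, closing the recursion as written requires $p_0>1-1/\sqrt 2$, and this quantitative threshold is \emph{not} what the cited cell-problem estimates deliver directly: the paper invokes \cites{FreidlinWentzell12,NovikovPapanicolaouEtAl05} only to conclude that $p_i=\sup_x\prob^x(X_i(\tau_1)\in\pi\Z)$ lies in $(0,1)$ with $A$-independent bounds, which by the symmetry $p_0=1-p_{3-i}$ gives only $p_0>0$. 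The good news is that the $1-1/\sqrt 2$ barrier is an artifact of the $t\mapsto t/2$ split and disappears if you split at $\theta t$: the recursion becomes $f(t)\leq g(\theta t)+(1-p_0)f((1-\theta)t)$ with $g(s)=c\delta\abs{\ln\delta}/\sqrt s$, and both the geometric sum $\sum_j\paren[\big]{(1-p_0)/\sqrt{1-\theta}}^j$ and the terminal term $(1-p_0)^k$ (when $(1-\theta)^kt$ leaves the range~\eqref{eqnTRange} and you fall back on $f\leq 1$) close as soon as $(1-p_0)^2<1-\theta$, i.e.\ any $\theta<p_0(2-p_0)$. Thus $p_0>0$ suffices, and with that tweak your approach rests on precisely the same input as the paper's.

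For comparison, the paper avoids the $t$-recursion altogether. It decomposes $\prob^x(\tau_1^1>t)=\sum_n\prob^x(\tau_1^1=\tau_n\AND\tau_n\geq t)$, proves by induction in $n$ (via an integration-by-parts step using the convolution identity~\eqref{eqnConvId}) that each summand is at most $p_1p_2^{n-1}\min(1,c_1n\delta\abs{\ln\delta}/\sqrt t)$, and sums the resulting geometric-times-linear series. Your self-improving recursion in $t$ is conceptually shorter once the split parameter is chosen correctly; the paper's explicit induction in $n$ meshes more closely with the convolution machinery reused in Lemma~\ref{lmaConvolutionBound}.

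Your treatment of the upper bound is essentially the paper's: reduce via $\prob^x(\tau_1^i\leq t)\leq\prob^x(\tau_1\leq t)\leq\sup_{\abs{h(y)}=\delta}\prob^y(\tau\leq t)$, then produce the explicit barrier $\psi^+(y,t)=1-\erf\paren[\big]{h(y)/\sqrt{c_0t}}$. One sign to correct: since you want to bound $\prob^x(\tau\leq t)$ from \emph{above}, the barrier must be a \emph{super}-solution of the parabolic problem for $\prob^x(\tau\leq t)$ (equivalently a sub-solution of the problem for $\prob^x(\tau>t)$), not a sub-solution as written.
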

  \begin{proof}
    We begin with the lower bound~\eqref{eqnTau1iLowerExplicit}.
    For $i \in \{1, 2\}$ define
    \begin{equation*}
      p_i
	= \sup_{x \in \R^2} \prob^x(\tau_1^i = \tau_1)
	= \sup_{x \in \R^2} \prob^x(X_i\paren{\tau_1} \in \pi \Z).
    \end{equation*}
    By definition of $\tau_1$ we recall that $X$ has to first hit $\del B_\delta$ before returning to the separatrix.
    Consequently
    \begin{equation*}
      p_i
	= \sup_{x \in \mc B_\delta} \prob^x(X_i\paren{\tau} \in \pi \Z),
    \end{equation*}
    where $\tau = \inf\{ t \geq 0 \st h(X_t) = 0 \}$ is the first hitting time of $X$ to the separatrix $\{h = 0\}$.
    By symmetry of the flow, we may further restrict the supremum above to only run over $x \in Q_0$, where $Q_0 = (0, \pi)^2$.
    In $Q_0$ we know $\varphi_i(x) = \prob^x(X_i\paren{\tau} \in \pi \Z)$ satisfies the cell problem
    \begin{equation*}
      \begin{beqn}
	- \lap \varphi_i + A v \cdot \grad \varphi_i  = 0 &
	  in $Q_0$,\\
	\varphi_i = \Chi{x_i \in \set{0, \pi} } & on $\del Q_0$.
      \end{beqn}
    \end{equation*}
    From this it immediately follows~\cites{FreidlinWentzell12,NovikovPapanicolaouEtAl05} that $p_i$ is bounded away from both $0$ and $1$ by a constant that is independent of $A$.

    Now we will prove the lower bound~\eqref{eqnTau1iLowerExplicit} by first showing
    \begin{equation}\label{eqnFnMain}
      \prob^x(\tau_1^1 = \tau_n \AND \tau_n \geq  t )
	\leq  f_n(t)
	\defeq p_1 p_2^{n-1}
	  \min \paren[\Big]{ 1,  \frac{c_1 \delta n \abs{\ln \delta}}{\sqrt{t}} },
    \end{equation}
    for some constant $c_1$ independent of $n$ and $A$.
    Indeed, once~\eqref{eqnFnMain} is established, we see
    \begin{equation*}
      \prob^x( \tau_1^1 \geq t )
	= \sum_1^\infty \prob(\tau_1^1 = \tau_n \AND \tau_n \geq t )
	\leq \sum_1^\infty  p_1 p_2^{n-1} \frac{c_1 n \delta \abs{\ln \delta}}{\sqrt{t}}
	\leq \frac{c \delta \abs{\ln \delta}}{\sqrt{t}},
    \end{equation*}
    from which~\eqref{eqnTau1iLowerExplicit} immediately follows for $i = 1$.
    The case $i = 2$ follows by symmetry of the flow.
    As usual we assume that $c > 0$ is a finite constant, independent of $A$, that may increase from line to line.

    We prove~\eqref{eqnFnMain} by induction.
    For $n = 1$, Lemma~\ref{lmaTau1BoundLower} gives
    \begin{equation*}
      \prob^x(\tau_1^1 = \tau_1 \AND \tau_1 \geq  t )
	\leq \prob^x( \tau_1 \geq  t )
	\leq \frac{c_1' \delta \abs{\ln \delta}}{\sqrt{t}},
    \end{equation*}
    for some constant $c_1'$ that is independent of $A$.
    Since $p_1 > 0$ this yields~\eqref{eqnFnMain} for $n = 1$.

    For the inductive step, define $q_n = \prob^x( \tau_1^1 = \tau_n \AND \tau_1 \geq t )$.
    By the strong Markov property
    \begin{align}
      \nonumber
      \MoveEqLeft
      \prob^x( \tau_1^1 = \tau_n \AND \tau_n \geq t )\\
	\nonumber
	&= q_n
	  + \E^x \brak[\Big]{
	      \Chi{\tau_1 < t \AND X_1(\tau_1) \not\in \pi \Z}
	      \prob^{X(\tau_1) } \paren[\big]{
		\tau_{n-1} + s \geq t, \AND \tau_{n-1} = \tau^1_1
	      }
	      \at{s = \tau_{1}}
	    }
	\\
	\nonumber
	&\leq
	  q_n
	  + \E^x \brak[\Big]{
	      \Chi{\tau_1 = \tau^2_1 < t }
	      f_{n-1}( t - \tau_{1})
	    }
	\\
	\nonumber
	&= q_n - \int_0^t f_{n-1}( t - s ) \, d\prob^x( \tau_{1} \geq s  \AND \tau_1 = \tau^2_1 )  \\
	\nonumber
	&= q_n -
	    \brak[\Big]{
	      f_{n-1}(t-s) \prob^x(\tau_{1} \geq s  \AND \tau_1 = \tau^2_1 )
	    }_0^t \qquad
	    \\
	  \nonumber
	  &\qquad\qquad
	    -\int_0^t
	      \prob^x\paren{
		\tau_{1} \geq s  \AND 
		\tau_1 = \tau^2_1
	      }
	      f_{n-1}^{\prime}( t - s ) \, ds
	\\
	\label{eqnTau111}
	&= q_n - p_1 p_2^{n-2} \prob^x(\tau_{1} \geq t  \AND \tau_1 = \tau^2_1 ) \\
	  \nonumber
	  &\qquad\qquad
	    + f_{n-1}(t) p_2 
	    - \int_0^t \prob^x( \tau_{1} \geq s  \AND 
	    \tau_1 = \tau^2_1 ) f_{n-1}^{\prime}( t - s ) \, ds.
    \end{align}

    We claim that the net contribution of the first two terms in~\eqref{eqnTau111} is negative.
    Indeed, using the strong Markov property again gives
    \begin{multline}\label{eqnQnMain}
      q_n
	= \prob^x( \tau_1^1 = \tau_n \AND \tau_1 \geq t )
	= \E^x
	    \Chi{ \tau_1 \geq t \AND X_1(\tau_1) \not\in \pi \Z }
	    \prob^{X(\tau_1)} \paren[\big]{ \tau_1^1 = \tau_{n-1} }
      \\
	\leq
	  \prob^x \paren{ \tau_1 \geq t \AND \tau_1 = \tau_1^2}
	  \sup_{h(y) = 0}\prob^{y} \paren[\Big]{ \tau_1^1 = \tau_{n-1} }
	\leq
	  p_1 p_2^{n-2}
	  \prob^x \paren{ \tau_1 \geq t }.
    \end{multline}
    The last inequality above followed by repeated use of the strong Markov property.

    Now using~\eqref{eqnQnMain} and the inductive hypothesis in~\eqref{eqnTau111} yields
    \begin{equation}\label{eqnTau112}
      \prob^x( \tau_1^1 = \tau_n \AND \tau_n \geq t )
	\leq p_1 p_2^{n-1}
	  \Big[
	  \begin{multlined}[t]
	    \frac{c_1 \delta (n-1)  \abs{\ln \delta}}{\sqrt{t}} \mathop+
	    \\
	      \int_0^{t_n}
		\paren[\Big]{\frac{c_1' \delta  \abs{\ln \delta}}{p_1 \sqrt{s}} }
		\paren[\Big]{\frac{c_1 \delta  (n-1) \abs{\ln \delta}}{2 (t-s)^{3/2}} }
		\, ds
	    \Big],
	  \end{multlined}
    \end{equation}
    where $t_n < t$ is chosen so that
    \begin{equation*}
      \frac{c_1 (n-1)  \delta \abs{\ln \delta}}{\sqrt{t - t_n} } = 1.
    \end{equation*}

    The integral on the right of~\eqref{eqnTau112} can be computed explicitly from the identity
    \begin{equation}\label{eqnConvId}
      \int \frac{ds}{2 \sqrt{ s (t-s)^3 } } = \frac{\sqrt{s}}{t \sqrt{t - s}}.
    \end{equation}
    Consequently,
    \begin{equation*}
      \prob^x( \tau_1^1 = \tau_n \AND \tau_n \geq t )
      \leq
	\frac{p_1 p_2^{n-1} \delta \abs{\ln \delta}}{\sqrt{t}}
	\brak[\Big]{(n-1) c_1 + \frac{c_1'}{p_1}}.
    \end{equation*}
    Note that $c_1' / p_1$ is independent of $n$, and so choosing $c_1 > c_1' / p_1$ gives
    \begin{equation}\label{eqnTau113}
      \prob^x( \tau_1^1 = \tau_n \AND \tau_n \geq t ) \leq
	\frac{p_1 p_2^{n-1} n c_1 \delta \abs{\ln \delta}}{\sqrt{t}}.
    \end{equation}
    Further, by repeated use of the strong Markov property we see
    \begin{equation}\label{eqnTau114}
      \prob^x( \tau_1^1 = \tau_n \AND \tau_n \geq t ) \leq
	p_1 p_2^{n-1}.
    \end{equation}
    Combining~\eqref{eqnTau113} and~\eqref{eqnTau114} immediately gives~\eqref{eqnFnMain}.
    As explained earlier this implies~\eqref{eqnTau1iLowerExplicit} and completes the proof of the lower bound.
    \medskip

    Next, we turn to the upper bound~\eqref{eqnTau1iUpperExplicit}.
    Clearly
    \begin{equation*}
      \prob^x\paren{ \tau_i^1 \leq t }
	\leq \prob^x\paren{ \tau_1 \leq t }
	\leq \prob^x\paren[\big]{ \tau(X_{\sigma_1}) \leq t }
	\leq \sup_{\abs{h(y)} = \delta} \prob^y( \tau \leq t ).
    \end{equation*}
    Now we let $\psi(y, t) = \prob^y( \tau \leq t )$, and restrict our attention to one cell $Q$.
    We know that $\psi$ satisfies
    \begin{equation}\label{eqnPDEpsi}
      \begin{beqn}
	\delt \psi + A v \cdot \grad \psi - \lap \psi = 0 & in $Q$,\\
	\psi = 0 & on $Q \times \{0\}$,\\
	\psi = 1 & on $\del Q \times (0, \infty)$.
      \end{beqn}
    \end{equation}

    We claim
    \begin{equation*}
      \psi^+ \defeq 1 - \erf \paren[\Big]{ \frac{h}{\sqrt{c_0 t}} }
    \end{equation*}
    is a super-solution to~\eqref{eqnPDEpsi}.
    Indeed, $\psi^+$ certainly satisfies the boundary and initial conditions in~\eqref{eqnPDEpsi}.
    Further,
    \begin{equation}\label{eqnPsiPlus}
      \delt \psi^+ + A v \cdot \grad \psi^+ - \lap \psi^+
	=
	  \paren[\Big]{
	    \frac{h}{c_0 \sqrt{\pi t^3} }
	    - \frac{4 h}{c_0 \sqrt{\pi t}}
	    - \abs{\nabla h}^2  \frac{4 h}{c_0^3 \sqrt{\pi t^3}}
	  }
	  \exp \paren[\Big]{ - \frac{h^2}{c_0^2 t} }.
    \end{equation}
    Choosing $c_0$ large enough we can ensure
    \begin{equation*}
      1 - 4t - \frac{4 \abs{\grad h}^2}{ c_0^2 } \geq 0
    \end{equation*}
    for $t \leq 1/8$.%
    This forces the right hand side of~\eqref{eqnPsiPlus} to be positive, showing $\psi^+$ is a super-solution to~\eqref{eqnPDEpsi}.

    Consequently if $\abs{h(x)} < \delta$ we must have $\prob^x( \tau_1 \leq t ) \leq  \psi^+(\delta)$ which immediately implies~\eqref{eqnTau1iUpperExplicit}.
  \end{proof}

  \subsection{Higher return times of the coordinate processes.}\label{sxnHigherReturn}
  The next step is to estimate $\tau_n^i$ in terms of $\tau_1^i$.
  This follows abstractly from the strong Markov property, and is our next lemma.
  \begin{lemma}\label{lmaConvolutionBound}
    Fix $T \in (0, \infty]$ and $i \in \set{1,2}$.
    Suppose $f$ and $g$ are two absolutely continuous, increasing functions such that $f(0)=g(0) = 0$ and
    \begin{gather}
      \label{eqnTau1BoundLower}
	\inf_{\abs{h(x)} < \delta} \prob^x( \tau_1^i \leq t )
	\geq f(t)
      \\
      \label{eqnTau1BoundUpper}
	\llap{\text{and}\qquad}
	\sup_{\abs{h(x)} < \delta} \prob^x( \tau_1^i \leq t )
	\leq g(t)
    \end{gather}
    for all $t \leq T$.
    Then, for any $n \in \N$ and $t \leq T$ we have
    \begin{gather}
      \label{eqnTauNBound}
      \inf_{ \abs{h(x)} < \delta } \prob^x( \tau_n^i \leq t )
	 \geq  f * (f')^{*(n-1)}
	 = \int_0^t (f')^{*n},\\
       \llap{\text{and}\qquad}
       \label{eqnTauNBound2}
       \sup_{ \abs{h(x)} < \delta } \prob^x( \tau_n^i \leq t )
	 \leq   g * (g')^{*(n-1)}
	 = \int_0^t (g')^{*n}.
    \end{gather}
    The convolutions above are defined using
    \begin{equation*}
      f_1*f_2(t) = \int_0^t f_1(s) f_2(t-s) \, ds,
      \quad\text{and}\quad
      f_1^{*n} = \underbrace{f_1 * \cdots * f_1}_{n \text{ times}}.
    \end{equation*}
  \end{lemma}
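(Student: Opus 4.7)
The plan is to prove both inequalities by induction on $n$, with the strong Markov property at $\tau_{n-1}^i$ supplying the only nontrivial step. For the base case $n = 1$, the bounds reduce to \eqref{eqnTau1BoundLower} and \eqref{eqnTau1BoundUpper}, since $\int_0^t f'(s)\,ds = f(t)$ by $f(0) = 0$ (and likewise for $g$), so that $\int_0^t (f')^{*1} = f = f*(f')^{*0}$.

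For the inductive step, I would decompose $\tau_n^i = \tau_{n-1}^i + (\tau_n^i - \tau_{n-1}^i)$ and condition on $\mc{F}_{\tau_{n-1}^i}$. The strong Markov property gives that the increment $\tau_n^i - \tau_{n-1}^i$ has, conditionally, the law of $\tau_1^i$ started from the random point $X_{\tau_{n-1}^i}$; this point lies on the separatrix $\{h = 0\} \subset \mc B_\delta$, so \eqref{eqnTau1BoundLower} and \eqref{eqnTau1BoundUpper} apply. Writing $F_{n-1}(s) \defeq \prob^x(\tau_{n-1}^i \leq s)$, this yields
\begin{equation*}
  \int_0^t f(t-s) \, dF_{n-1}(s)
    \;\leq\; \prob^x(\tau_n^i \leq t)
    \;\leq\; \int_0^t g(t-s) \, dF_{n-1}(s).
\end{equation*}
Integration by parts, with vanishing boundary terms thanks to $f(0) = g(0) = F_{n-1}(0) = 0$, rewrites these Stieltjes integrals as the ordinary convolutions $(f' * F_{n-1})(t)$ and $(g' * F_{n-1})(t)$. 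Combining with the inductive estimates $f*(f')^{*(n-2)} \leq F_{n-1} \leq g*(g')^{*(n-2)}$ and the fact that convolution with the non-negative kernels $f'$ and $g'$ preserves inequalities produces
\begin{equation*}
  f*(f')^{*(n-1)}(t) \;\leq\; \prob^x(\tau_n^i \leq t) \;\leq\; g*(g')^{*(n-1)}(t),
\end{equation*}
which is the desired bound. The equivalent form $\int_0^t (f')^{*n}$ follows from $f(t) = \int_0^t f'$ and Fubini.

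The only point that requires genuine care, and the one I expect to be the main obstacle, is the strong Markov step: verifying that starting from the stopping-time position $X_{\tau_{n-1}^i}$ (which lies on the separatrix with its $i$-th coordinate in $\pi\Z$), the subsequent first exit from $\mc B_\delta$ and return to the separatrix at a point whose $i$-th coordinate lies in $\pi\Z$ is genuinely distributed as $\tau_1^i$ under $\prob^{X_{\tau_{n-1}^i}}$. This is essentially a shift-invariance check on the nested definitions of $\sigma_n^i$ and $\tau_n^i$; once it is pinned down, the rest of the proof is purely a formal convolution induction, and the absolute continuity of $F_{n-1}$ needed to justify integration by parts is itself preserved along the induction, since the right-hand side of each inductive bound is a convolution of absolutely continuous functions.
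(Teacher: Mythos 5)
Your proof is correct and takes essentially the same route as the paper: induction on $n$, the strong Markov property at $\tau_{n-1}^i$ to reduce the increment to a fresh copy of $\tau_1^i$ started from a separatrix point (which lies in $\mc B_\delta$, so the hypotheses apply), an integration by parts to turn $\int_0^t f(t-s)\,dF_{n-1}(s)$ into the convolution $f'*F_{n-1}$, and monotonicity of convolution against the nonnegative kernel $f'$ (resp.\ $g'$) to close the induction. The shift-invariance check you flag as the delicate point is genuine but straightforward here, since the nested stopping times are all defined purely by hitting times of $\{h=0\}$ and $\partial\mc B_\delta$; the paper does not dwell on it either.
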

  \begin{proof}[Proof of  Lemma~\ref{lmaConvolutionBound}]
    By induction
    \begin{multline*}
      \prob^x( \tau_n \leq t )
	= \E^x \left[\prob^{X_{\tau_{n-1}} } (\tau_1 + s \leq t)\right]_{s = \tau_{n-1}}\\
	\geq \E^x f( t - \tau_{n-1} )
	= \int_0^t f( t - s ) \, dP^x( \tau_{n-1} \leq s )\\
	= \bigl[f(t-s) \prob(\tau_{n-1} \leq s)\bigr]_0^t + \int_0^t \prob^x( \tau_{n-1} \leq s ) f'( t - s ) \, ds\\
	\geq \int_0^t  f * (f')^{*(n-2)}(s) f'( t - s ) \, ds
	= f * (f')^{*(n-1)}(t)
	 = \int_0^t (f')^{*(n)}(s) \, ds
    \end{multline*}
    proving~\eqref{eqnTauNBound} as desired.
    The proof of~\eqref{eqnTauNBound2} is identical.
  \end{proof}

  Finally, we conclude this section by using Lemmas~\ref{lmaTauI1Bound} and~\ref{lmaConvolutionBound} to prove the bounds claimed in Lemma~\ref{lmaTauINBound}.
  \begin{proof}[Proof of Lemma~\ref{lmaTauINBound}]
    To prove the lower bound~\eqref{eqnTauINLower}, define the function $f_a$ by
    \begin{equation*}
      f_a(t) = \Chi{t > 0} \paren[\Big]{1 - \frac{a}{\sqrt{t}} }^+.
    \end{equation*}
    Then for $a, b > 0$, a direct calculation using~\eqref{eqnConvId} shows that for $t \geq a^2 + b^2$ we have
    \begin{equation*}
      f_b * f_a'(t)
	= \int_{a^2}^{t - b^2} \frac{a}{2 s^{3/2}} \paren[\Big]{ 1 - \frac{b}{\sqrt{t - s}} } \, ds
	= 1 - \frac{a \sqrt{t - b^2} + b \sqrt{t - a^2} }{t}
	\geq 1 - \frac{a + b}{\sqrt{t}}
    \end{equation*}
    Consequently
    \begin{equation}\label{eqnFaConvFb}
      f_b * f_a'(t) \geq f_{a+b}(t).
    \end{equation}
    Now~\eqref{eqnTauINLower} immediately follows from~\eqref{eqnTau1iLowerExplicit}, \eqref{eqnTauNBound} and~\eqref{eqnFaConvFb}.

    Inequality~\eqref{eqnTauINUpper} follows using an exact calculation for $(g')^{*n}$ using the Laplace transform.
    Namely, let $c_0$ be the constant in Lemma~\ref{lmaTauI1Bound} and observe
    \begin{equation*}
      \mathcal L g'(s)
	\defeq \int_0^\infty e^{-st} g'(t) \, dt
	  = \frac{\delta }{c_0 \sqrt{\pi}}
	    \int_0^\infty e^{-st}
	      \exp\paren[\Big]{ \frac{- \delta^2}{c_0^2 t} }
	      \, \frac{dt}{t^{3/2}}
	  = \exp\paren[\Big]{ \frac{- 2 \delta \sqrt{s}}{c_0} }.
    \end{equation*}
    Consequently
    \begin{equation*}
      \mathcal L (g')^{*n}(s)
	= \exp\paren[\Big]{\frac{- 2 n \delta  \sqrt{s}}{c_0} }
	= \mathcal L (g')( n^2 s ),
    \end{equation*}
    and so
    \begin{equation*}
      (g')^{*n} (t)
	= \frac{1}{n^2} g'\paren[\big]{ \frac{t}{n^2} }
	= \frac{n \delta }{c_0 \sqrt{\pi} t^{3/2}}
	  \exp\paren[\Big]{ \frac{- n^2 \delta^2 }{c_0^2 t} }.
    \end{equation*}
    Integrating in time and using~\eqref{eqnTau1iUpperExplicit} and \eqref{eqnTauNBound2}  we obtain~\eqref{eqnTauINUpper} as desired.
  \end{proof}

  \section{The variance bound}\label{sxnVarBound}
  Finally, we conclude this paper with a proof of Lemma~\ref{lmaVarNBounds}.
  Our proof is similar in spirit to Lemma~\ref{lmaBoundaryExit}, and relies on the fact that the chance that $X$ re-enters a cell corner is bounded away from~$1$ (Lemma~\ref{lmaEdgeToCorner}).

  We prove the upper and lower bounds separately.
  \begin{proof}[Proof of the upper bound in Lemma~\ref{lmaVarNBounds}]
    Without loss of generality we assume $i = 1$.
    By the strong Markov property
    \begin{multline}\label{eqnVarNConditioned}
      \E^x\paren[\big]{
	\abs{ X_{1}(\tau^1_{n+1} \varmin t ) -X_1(\tau^1_n \varmin t) }^2 
	\big|
	\mathcal{F}_{\tau^1_n} }\\
      = \Chi{ \tau^1_n \leq t}  
	  \E^{X_{\tau_n^1}} \abs{ X_{1}(\tau^1_{1} \varmin s) - X_1(0)}^2 
      \Bigr|_{s=t- \tau^1_n}.
    \end{multline}
    Hence
    \begin{equation}\label{eqnVarX1tmp1}
      \E^x \abs{ X_{1}(\tau^1_{n+1} \varmin t ) -X_1(\tau^1_n \varmin t) }^2 
	\leq \prob^x(\tau^1_n \leq t) \tilde V(t),
    \end{equation}
    where
    \begin{equation*}
      \tilde V(t)
	= \sup_{y \in \mc B_\delta, ~ s \leq t}
	    \E^y \abs{ X_1(\tau^1_1 \varmin s) - y_1}^2.
    \end{equation*}
    Thus the proof of the upper bound~\eqref{eqnVarUpper} will follow if we can find an upper bound for $\tilde V$ that is independent of $A$.

    For convenience, we will first estimate $V$, where
    \begin{equation*}
      V(t)
	\defeq \sup_{y \in \mc C, s \leq t}
	    \E^y \abs{ X_1(\sigma^1_1 \varmin s) - y_1}^2.
    \end{equation*}
    If $Q \subset \R^2$ is a cell, we claim
    \begin{equation}\label{eqnVBound}
      V(t) \leq \frac{ \diam(Q)^2 }{1 - P_0}
      \quad\text{and}\quad
      \tilde V(t) \leq \frac{5}{1 - P_0} \diam(Q)^2.
    \end{equation}
    Here $P_0$ is the constant appearing in Lemma~\ref{lmaEdgeToCorner}.
    Once~\eqref{eqnVBound} is established, combining it with~\eqref{eqnVarX1tmp1} immediately yields~\eqref{eqnVarUpper} as desired.

    To prove~\eqref{eqnVBound}, fix $\mc C_0$ to be a connected component of $\mc C$ (defined in~\eqref{eqnCornerEdgeDef}), and suppose $y \in \overline{\mc C}_0 \cap \mc B_\delta$.
    Define $\tilde \tau = \inf\{ t \geq 0 \st X_t \in \overline{\mc C} - \overline{\mc C}_0 \}$ be the hitting time of $X$ to a different corner.
    Clearly
    \begin{multline}\label{eqnV1}
      \E^y \abs{X_1(\sigma^1_1 \varmin t) - y_1}^2
	=\\
      \E^y \Chi{\tilde \tau < \sigma_1 \varmin t}
	  \abs{X_1(\sigma^1_1 \varmin t) - y_1}^2
	+ \E^y \Chi{\tilde \tau \geq \sigma_1 \varmin t}
	  \abs{X_1(\sigma^1_1 \varmin t) - y_1}^2,
    \end{multline}
    and we handle each term on the right individually.

    When $\sigma_1 \varmin t \leq \tilde \tau$, the process $X$ couldn't have travelled further than one side of the cell $Q$,
    and hence
    \begin{equation*}
      \E^y \Chi{\tilde \tau \geq \sigma_1 \varmin t}
	    \abs{X_1(\sigma^1_1 \varmin t) - y_1}^2
	\leq \diam(Q)^2
    \end{equation*}

    For the other term on the right of~\eqref{eqnV1} observe
    \begin{multline*}
      \E^y \Chi{\tilde \tau < \sigma_1 \varmin t}
	    \abs{X_1(\sigma^1_1 \varmin t) - y_1}^2
	= \E^y \Chi{\tilde \tau < \sigma_1 \varmin t} \E^{X_{\tilde \tau}}
	  \brak[\Big]{ \abs{X_1(\sigma_1 \varmin s) - y_1 }^2 }_{s = t - \tilde \tau}\\
	\leq \E^y \Chi{\tilde \tau < \sigma_1 \varmin t} V(t)
	\leq \prob^y \paren{\tilde \tau < \sigma_1} V(t)
	\leq P_0 V(t).
    \end{multline*}
    Note that the last inequality above follows immediately from Lemma~\ref{lmaEdgeToCorner}.
    Indeed, for $X$ to enter another corner before exiting the boundary layer, it must first enter an edge.
    From an edge (more precisely, from $\mc E'$), Lemma~\ref{lmaEdgeToCorner} shows that the chance that $X$ enters a corner is bounded above by $P_0 < 1$, and is independent of $A$.

    Combining our estimates and returning to~\eqref{eqnV1} we see
    \begin{equation*}
      \E^y \abs{X_1(\sigma^1_1 \varmin t) - y_1}^2
	\leq \diam(Q)^2 + P_0 V(t).
    \end{equation*}
    Taking the supremum over $y \in \mc C$, and using the fact that $V$ is increasing gives
    \begin{equation*}
      V(t) \leq \diam(Q)^2 + P_0 V(t),
    \end{equation*}
    from which the first inequality in~\eqref{eqnVBound} follows.

    Finally, we prove the second inequality in~\eqref{eqnVBound}.
    Observe
    \begin{equation*}
      \abs{X_1( \tau_1^1 \varmin t ) - X_1( \sigma_1^1 \varmin t )} \leq \diam(Q),
    \end{equation*}
    and hence
    \begin{equation}\label{eqnVCorner1}
      \E^y \abs{ X_1(\tau^1_1 \varmin t) - y_1}^2
	\leq
	  2 \paren[\big]{ \frac{1}{1 - P_0} + 1 } \diam(Q)^2
	\leq  \frac{4}{1 - P_0} \diam(Q)^2,
    \end{equation}
    for all $y \in \overline{\mc C}$.
    
    If $y \not\in \mc C$, then let $\tilde \sigma = \inf\{ t > 0 \st X_t \in \overline{\mc C}\}$ be the hitting time to the corner.
    Note
    \begin{align*}
      \MoveEqLeft
      \E^y \abs{ X_1(\tau^1_1 \varmin t) - y_1}^2
      \\
	&= \E^y \Chi{\tilde \sigma < \tau^1_1 \varmin t} \abs{ X_1(\tau^1_1 \varmin t) - y_1}^2
	+ \E^y \Chi{\tilde \sigma \geq \tau^1_1 \varmin t} \abs{ X_1(\tau^1_1 \varmin t) - y_1}^2\\
	&\leq
	  \sup_{
	    \substack{
	      s \leq t\\
	      z \in \del \mc C \cap \mc B_\delta
	    }
	  }
	  \E^z \abs{X_1(\tau^1_1 \varmin s ) - y_1}^2
	  + \diam(Q)^2
	\leq \frac{5}{1 - P_0} \diam(Q)^2.
    \end{align*}
    The first inequality above followed from the strong Markov property, and the last inequality above followed from~\eqref{eqnVCorner1}.
    This proves the second inequality in~\eqref{eqnVBound}, and finishes the proof of the upper bound in Lemma~\ref{lmaVarNBounds}.
  \end{proof}
  \begin{proof}[Proof of the lower bound in Lemma~\ref{lmaVarNBounds}]
    Using~\eqref{eqnVarNConditioned} we see that
    \begin{align*}
      \MoveEqLeft
      \E^x \abs{ X_{1}(\tau^1_{n+1} \wedge t ) -X_1(\tau^1_n \wedge t) }^2
      \\
      & \geq
	\E^x \paren[\big]{
	  \Chi{ \tau^1_n \leq t}
	  \inf_{\set{y \st[] y_1 \in \pi \Z} }
	    \E^y \abs{ X_{1}(\tau^1_{1} \varmin s) - y_1}^2
	      \Bigr|_{s=t- \tau^1_n}
	  }
	\\
      & = \int_0^t 
	  \inf_{\set{y \st[] y_1 \in \pi \Z} }
	    \E^y \abs{ X_{1}(\tau^1_{1} \varmin (t-s) ) - y_1}^2
	  \, d\prob^x ( \tau^1_n \leq s )
	\\
      & \geq
	\prob^x \paren[\big]{ \tau^1_n \leq \frac{t}{2} } V_1(t),
    \end{align*}
    where
    \begin{equation}\label{eqnV1def}
      V_1(t) \defeq
	\inf_{
	  \substack{
	    t/2\leq s \leq t\\
	    y \in \pi \Z \times \R
	    }
	  }
	  \E^y\abs{ X^y_{1}(\tau^1_{1} \varmin s) -y_1}^2.
    \end{equation}

    Thus the lower bound~\eqref{eqnVarLower} will follow provided we show that for all~$t$ satisfying~\eqref{eqnTRange} we have
    \begin{equation}\label{eqnV1lower}
      V_1(t) \geq c.
    \end{equation}
    We devote the rest of the proof to establishing~\eqref{eqnV1lower}.

    By symmetry the infimum in~\eqref{eqnV1def} can be taken over only the set $\set{ y \st y_1 = 0 }$.
    Consequently,
    \begin{align*}
      V_1(t)
	&\geq
	  \inf_{ \set{ y \st[] y_1 = 0} }
	  \E^y {
	    \Chi{ \tau^1_1 \leq t/2} \abs{ X_{1}(\tau^1_{1}) }^2 }
	\\
      &\geq
	\pi^2
	  \inf_{ \set{ y \st[] y_1 = 0} }
	  \prob^y \paren[\big]{
	    \tau^1_1 \leq \frac{t}{2} \AND \abs{ X_{1}(\tau^1_{1}) } \geq \pi
	  }.
    \end{align*}
    To estimate the right hand side, observe
    \begin{multline}\label{eqnV11}
      \prob^{y}\paren[\big]{
	\tau^1_1 \leq \frac{t}{2} \AND \abs{ X_{1}(\tau^1_{1})} \geq \pi
      }
      \geq
	\prob^{y} \paren[\big]{\tau^1_1 \leq \frac{t}{2}}
	-\prob^y \paren[\big]{ \abs{ X_{1}(\tau^1_{1})} = 0 }
      \\
      \geq
	1 - \frac{c_0 \delta {\abs{\ln \delta}} }{\sqrt{t}}
	  - \prob^{y}\paren[\big]{ \abs{ X_{1}(\tau^1_{1})} = 0 }
      =
	\prob^{y}\paren[\big]{ \abs{ X_{1}(\tau^1_{1})} \geq \pi } 
	 - \frac{c_0 \delta {\abs{\ln \delta}} }{\sqrt{t}}
    \end{multline}
    where the second inequality followed from Lemma~\ref{lmaTauINBound}.

    By the strong Markov property,
    \begin{align}
      \nonumber
      \MoveEqLeft
      \prob^y\paren{ \abs{X_{1}(\tau^1_{1})} \geq \pi }
	= \E^y \prob^{X(\sigma_1^1)}\paren{ \abs{X_1(\tau^1_1)} \geq \pi }
	\\
      \nonumber
	&\geq \prob^y \paren{ \abs{X_1(\sigma_1^1)} < \pi }
	  \inf \set[\big]{
	    \prob^z \paren{ |X_1(\tau_1^1)| = \pi }
	    \st \abs{h(z)} = \delta \AND \abs{z_1} < \pi
	    }
	\\
      \label{eqnX11}
	&\geq \prob^y \paren{ \abs{X_1(\sigma_1^1)} < \pi }
	  \inf \set[\big]{
	    \prob^z \paren{ |X_1(\tau_1)| = \pi }
	    \st \abs{h(z)} = \delta \AND \abs{z_1} < \pi
	    }.
    \end{align}
    We bound each term on the right individually.

    When $\abs{h(z)} = \delta$, the stopping time $\tau_1$ is simply the first hitting time of $X$ to the separatrix $\set{h = 0}$.
    Consequently,
    \begin{equation*}
      \inf_{\substack{ \abs{h(z)} = \delta \\ \abs{z_1} < \pi }}
	\prob^z \paren{ \abs{X_1(\tau_1)} = \pi }
      = \inf_{\substack{ h(z) = \delta \\ z \in Q_0 }}
	\zeta(z),
    \end{equation*}
    where  $Q_0 \defeq (0, \pi )\times( 0, \pi)$ and $\zeta$ is a solution to the cell problem
    \begin{equation*}
    \begin{beqn}
      A v \cdot \grad \zeta - \lap \zeta = 0 &
	in $Q_0$,\\
      \zeta(x) = \Chi{x_1=\pi}(x) & on $\del Q_0$.
    \end{beqn}
    \end{equation*}
    We know (see for instance~\cite{NovikovPapanicolaouEtAl05,FreidlinWentzell12,RhinesYoung83}) that
    \[
      \inf_{h(y)=\delta } \zeta(y) \geq c,
    \]
    for some constant $c = c(N)$ independent of $A$.

    For the first term on the right of~\eqref{eqnX11}, we use Lemma~\ref{lmaEdgeToCorner} again.
    Suppose $y \in \R^2$ and $y_1 = 0$.
    If $\abs{X_1( \sigma_1^1 )} \geq \pi$, then the process $X$ must have travelled  through at least one edge and re-entered a corner before exiting the boundary layer.
    By Lemma~\ref{lmaEdgeToCorner}, this happens with probability at most $P_0 < 1$.
    Consequently,
    \begin{equation*}
      \prob^y \paren{ \abs{X(\sigma_1^1)} < \pi }
      = 1 - \prob^y \paren{ \abs{X(\sigma_1^1)} \geq \pi }
      \geq 1 - P_0.
    \end{equation*}

    Thus returning to~\eqref{eqnX11} we see
    \begin{equation*}
      \prob^y( X_1( \tau_1^1 \geq \pi) ) \geq c,
    \end{equation*}
    for some constant $c$ independent of $A$.
    Using this in~\eqref{eqnV11} we obtain~\eqref{eqnV1lower}, provided $\delta \abs{\ln \delta} \ll \sqrt t$.
    This completes the proof of the upper bound~\eqref{eqnVarUpper}.
  \end{proof}
  
  \bibliographystyle{habbrv}
  \bibliography{refs}

\begin{thebibliography}{10}

\bibitem{BensoussanLionsEtAl78}
A.~Bensoussan, J.-L. Lions, and G.~Papanicolaou.
\newblock {\em Asymptotic analysis for periodic structures}, volume~5 of {\em
  Studies in Mathematics and its Applications}.
\newblock North-Holland Publishing Co., Amsterdam-New York, 1978.

\bibitem{CardosoTabeling88}
O.~Cardoso and P.~Tabeling.
\newblock Anomalous diffusion in a linear array of vortices.
\newblock {\em EPL}, 7(3):225, 1988.

\bibitem{Childress79}
S.~Childress.
\newblock Alpha-effect in flux ropes and sheets.
\newblock {\em Phys.\ Earth Planet Inter.}, 20:172--180, 1979.

\bibitem{ChildressSoward89}
S.~Childress and A.~M. Soward.
\newblock Scalar transport and alpha-effect for a family of cat's-eye flows.
\newblock {\em J. Fluid Mech.}, 205:99--133, 1989.

\bibitem{DolgopyatFreidlinEtAl12}
D.~Dolgopyat, M.~Freidlin, and L.~Koralov.
\newblock Deterministic and stochastic perturbations of area preserving flows
  on a two-dimensional torus.
\newblock {\em Ergodic Theory Dynam. Systems}, 32(3):899--918, 2012.

\bibitem{DolgopyatKoralov08}
D.~Dolgopyat and L.~Koralov.
\newblock Averaging of {H}amiltonian flows with an ergodic component.
\newblock {\em Ann. Probab.}, 36(6):1999--2049, 2008.

\bibitem{DolgopyatKoralov13}
D.~Dolgopyat and L.~Koralov.
\newblock Averaging of incompressible flows on two-dimensional surfaces.
\newblock {\em J. Amer. Math. Soc.}, 26(2):427--449, 2013.

\bibitem{Fannjiang02}
A.~Fannjiang.
\newblock Time scales in homogenization of periodic flows with vanishing
  molecular diffusion.
\newblock {\em J. Differential Equations}, 179(2):433--455, 2002.

\bibitem{FannjiangPapanicolaou94}
A.~Fannjiang and G.~Papanicolaou.
\newblock Convection enhanced diffusion for periodic flows.
\newblock {\em SIAM J. Appl. Math.}, 54(2):333--408, 1994.

\bibitem{Fredlin64}
M.~I. Fre{\u\i}dlin.
\newblock The {D}irichlet problem for an equation with periodic coefficients
  depending on a small parameter.
\newblock {\em Teor. Verojatnost. i Primenen.}, 9:133--139, 1964.

\bibitem{FreidlinWentzell12}
M.~I. Freidlin and A.~D. Wentzell.
\newblock {\em Random perturbations of dynamical systems}, volume 260 of {\em
  Grundlehren der Mathematischen Wissenschaften [Fundamental Principles of
  Mathematical Sciences]}.
\newblock Springer, Heidelberg, third edition, 2012.
\newblock Translated from the 1979 Russian original by Joseph Sz{\"u}cs.

\bibitem{GuyonPomeauEtAl87}
E.~Guyon, Y.~Pomeau, J.~P. Hulin, and C.~Baudet.
\newblock Dispersion in the presence of recirculation zones.
\newblock {\em Nuclear Physics B - Proceedings Supplements}, 2:271 -- 280,
  1987.

\bibitem{HairerKoralovEtAl14}
M.~Hairer, L.~Koralov, and Z.~Pajor-Gyulai.
\newblock From averaging to homogenization in cellular flows - an exact
  description of the phase transition, 2014.
\newblock preprint.

\bibitem{HaynesVanneste14}
P.~H. Haynes and J.~Vanneste.
\newblock Dispersion in the large-deviation regime. {P}art 1: shear flows and
  periodic flows.
\newblock {\em J. Fluid Mech.}, 745:321--350, 2014.

\bibitem{HaynesVanneste14b}
P.~H. Haynes and J.~Vanneste.
\newblock Dispersion in the large-deviation regime. {P}art 2. {C}ellular flow
  at large {P}\'eclet number.
\newblock {\em J. Fluid Mech.}, 745:351--377, 2014.

\bibitem{Heinze03}
S.~Heinze.
\newblock Diffusion-advection in cellular flows with large {P}eclet numbers.
\newblock {\em Arch. Ration. Mech. Anal.}, 168(4):329--342, 2003.

\bibitem{KaloshinDolgopyatEtAl05}
V.~Kaloshin, D.~Dolgopyat, and L.~Koralov.
\newblock Long time behaviour of periodic stochastic flows.
\newblock In {\em X{IV}th {I}nternational {C}ongress on {M}athematical
  {P}hysics}, pages 290--295. World Sci. Publ., Hackensack, NJ, 2005.

\bibitem{KaratzasShreve91}
I.~Karatzas and S.~E. Shreve.
\newblock {\em Brownian motion and stochastic calculus}, volume 113 of {\em
  Graduate Texts in Mathematics}.
\newblock Springer-Verlag, New York, second edition, 1991.

\bibitem{Koralov04}
L.~Koralov.
\newblock Random perturbations of 2-dimensional {H}amiltonian flows.
\newblock {\em Probab. Theory Related Fields}, 129(1):37--62, 2004.

\bibitem{LiuXinEtAl11}
Y.-Y. Liu, J.~Xin, and Y.~Yu.
\newblock Asymptotics for turbulent flame speeds of the viscous {G}-equation
  enhanced by cellular and shear flows.
\newblock {\em Arch. Ration. Mech. Anal.}, 202(2):461--492, 2011.

\bibitem{NolenXinEtAl09}
J.~Nolen, J.~Xin, and Y.~Yu.
\newblock Bounds on front speeds for inviscid and viscous {$G$}-equations.
\newblock {\em Methods Appl. Anal.}, 16(4):507--520, 2009.

\bibitem{NovikovPapanicolaouEtAl05}
A.~Novikov, G.~Papanicolaou, and L.~Ryzhik.
\newblock Boundary layers for cellular flows at high {P}\'eclet numbers.
\newblock {\em Comm. Pure Appl. Math.}, 58(7):867--922, 2005.

\bibitem{Olla94}
S.~Olla.
\newblock {\em Lectures on Homogenization of Diffusion Processes in Random
  Fields}.
\newblock Publications de l'Ecole Doctorale de l'Ecole Polytechnique, 1994.

\bibitem{PavliotisStuart08}
G.~A. Pavliotis and A.~M. Stuart.
\newblock {\em Multiscale methods}, volume~53 of {\em Texts in Applied
  Mathematics}.
\newblock Springer, New York, 2008.

\bibitem{RhinesYoung83}
P.~B. Rhines and W.~R. Young.
\newblock How rapidly is passive scalar mixed within closed streamlines?
\newblock {\em J.\ Fluid Mech.}, 133:135--145, 1983.

\bibitem{RosenbluthBerkEtAl87}
M.~N. Rosenbluth, H.~L. Berk, I.~Doxas, and W.~Horton.
\newblock Effective diffusion in laminar convective flows.
\newblock {\em Phys.\ Fluids}, 30:2636--2647, 1987.

\bibitem{SaguesHorsthemke86}
F.~Sagues and W.~Horsthemke.
\newblock Diffusive transport in spatially periodic hydrodynamic flows.
\newblock {\em Physical Review A}, 34(5), 1986.

\bibitem{ShawThiffeaultEtAl07}
T.~A. Shaw, J.-L. Thiffeault, and C.~R. Doering.
\newblock Stirring up trouble: multi-scale mixing measures for steady scalar
  sources.
\newblock {\em Phys. D}, 231(2):143--164, 2007.

\bibitem{Shraiman87}
B.~Shraiman.
\newblock Diffusive transport in a raleigh-bernard convection cell.
\newblock {\em Phys.\ Rev.\ A}, 36:261--267, 1987.

\bibitem{Sowers06}
R.~B. Sowers.
\newblock Random perturbations of two-dimensional pseudoperiodic flows.
\newblock {\em Illinois J. Math.}, 50(1-4):853--959 (electronic), 2006.

\bibitem{Taylor53}
G.~Taylor.
\newblock Dispersion of soluble matter in solvent flowing slowly through a
  tube.
\newblock {\em Proc. R. Soc. Lond. A}, 219(1137):186--203, 1953.

\bibitem{ThiffeaultChildress10}
J.-L. Thiffeault and S.~Childress.
\newblock Stirring by swimming bodies.
\newblock {\em Phys. Lett. A}, 374(34):3487--3490, 2010.

\bibitem{YoungPumirEtAl89}
W.~Young, A.~Pumir, and Y.~Pomeau.
\newblock Anomalous diffusion of tracer in convection rolls.
\newblock {\em Phys. Fluids A}, 1(3):462--469, 1989.

\bibitem{Young88}
W.~R. Young.
\newblock Arrested shear dispersion and other models of anomalous diffusion.
\newblock {\em J. Fluid Mech.}, 193:129--149, Aug 1988.

\bibitem{Young10}
W.~R. Young.
\newblock Private communication, 2010.

\end{thebibliography}
  
\end{document}